

\documentclass[12pt, leqno]{amsart}

\setlength{\textwidth}{15.5cm} \setlength{\textheight}{21.6cm}
\setlength{\oddsidemargin}{0.0cm} \setlength{\evensidemargin}{0.0cm}

\usepackage{graphicx}
\usepackage{amssymb,amsmath,amsthm,amscd}
\usepackage{mathrsfs}
\usepackage{enumerate}
\usepackage[usenames,dvipsnames]{color}
\usepackage[colorlinks=true, pdfstartview=FitV,
 linkcolor=blue,citecolor=blue,urlcolor=blue]{hyperref}
\usepackage[all]{xy}

\usepackage{verbatim}

\numberwithin{equation}{section}
\allowdisplaybreaks[4]

\newcommand{\nc}{\newcommand}
\nc{\op}{\operatorname}

\theoremstyle{plain}
\newtheorem{lemma}{Lemma}[subsection]
\newtheorem{prop}[lemma]{Proposition}
\newtheorem{theorem}[lemma]{Theorem}
\newcommand{\Prop}{\begin{prop}}
\newcommand{\enprop}{\end{prop}}
\newcommand{\Lemma}{\begin{lemma}}
\newcommand{\enlemma}{\end{lemma}}
\newcommand{\Th}{\begin{theorem}}
\newcommand{\enth}{\end{theorem}}
\newtheorem{corollary}[lemma]{Corollary}
\newcommand{\Cor}{\begin{corollary}}
\newcommand{\encor}{\end{corollary}}
\newtheorem{definition}[lemma]{Definition}

\newcommand{\Def}{\begin{definition}}
\newcommand{\edf}{\end{definition}}
\newtheorem{sublemma}[lemma]{Sublemma}
\newcommand{\Sublemma}{\begin{sublemma}}
\newcommand{\ensub}{\end{sublemma}}

\theoremstyle{definition}
\newtheorem{remark}[lemma]{Remark}

\newtheorem{Convention}[lemma]{Convention}
\newcommand{\Conv}{\begin{Convention}}
\newcommand{\enconv}{\end{Convention}}
\newcommand{\Rem}{\begin{remark}}
\newcommand{\enrem}{\end{remark}}

\newcommand{\C}{\mathbb {C}}
\newcommand{\Q}{\mathbb {Q}}

\newcommand{\Z}{{\mathbb Z}}
\newcommand{\B}{{\mathbf{B}}}

\newcommand{\CC}{{\mathscr{C}}}
\newcommand{\one}{{\bf{1}}}
\newcommand{\seteq}{\mathbin{:=}}

\newcommand{\hd}{{\operatorname{hd}}}

\newcommand{\g}{{\mathfrak{g}}}

\newcommand{\Hom}{\operatorname{Hom}}

\newcommand{\isoto}[1][]{\mathop{\xrightarrow%
[{\raisebox{.3ex}[0ex][.3ex]{$\scriptstyle{#1}$}}]%
{{\raisebox{-.6ex}[0ex][-.6ex]{$\mspace{2mu}\sim\mspace{2mu}$}}}}}
\newcommand{\tensor}{\otimes}

\newcommand{\eq}{\begin{eqnarray}}
\newcommand{\eneq}{\end{eqnarray}}

\newcommand{\eqn}{\begin{eqnarray*}}
\newcommand{\eneqn}{\end{eqnarray*}}
\newcommand{\on}{\operatorname}

\newcommand{\bni}{\be[{\rm(i)}]}
\newcommand{\bna}{\be[{\rm(a)}]}

\newcommand{\QED}{\end{proof}}
\newcommand{\Proof}{\begin{proof}}

\newcommand{\soplus}{\mathop{\mbox{\normalsize$\bigoplus$}}\limits}

\newcommand{\To}[1][{\hs{2ex}}]{\xrightarrow{\,#1\,}}
\newcommand{\id}{\on{id}}
\newcommand{\ba}{\begin{array}}
\newcommand{\ea}{\end{array}}

\newcommand{\bi}{\begin{enumerate}[{\rm(i)}]}

\newcommand{\set}[2]{\left\{#1 \mathbin{;} #2 \right\}}
\newcommand{\supp}{\operatorname{supp}}
\newcommand{\Mod}{\operatorname{Mod}}
\newcommand{\Modg}{\operatorname{{Mod}_{\mathrm{gr}}}}
\newcommand{\hs}{\hspace*}

\newcommand{\eqsub}{\begin{subequations}\begin{eqnarray}}
\newcommand{\eneqsub}{\end{eqnarray}\end{subequations}}

\newcommand{\ol}{\overline}

\nc{\la}{\lambda}
\nc{\lam}{\lambda}
\nc{\U}[1][\g]{U_q(#1)}
\nc{\te}{\tilde{e}}
\nc{\tei}{\tilde{e}_i}
\nc{\tf}{\tilde{f}}
\nc{\tfi}{\tilde{f}_i}
\nc{\tU}{\widetilde U_q(\g)}
\nc{\tE}{\tilde{E}}
\nc{\tF}{\widetilde{\F}}

\nc{\tk}{\tilde{k}}
\nc{\tkone}{\tk_{\ol{1}}}
\nc{\teone}{\tilde{e}_{\ol{1}}}
\nc{\tfone}{\tilde{f}_{\ol{1}}}

\nc{\teibar}{\tilde{e}_{\ol{i}}} \nc{\tfibar}{\tilde{f}_{\ol{i}}}
\nc{\tki}{{\tk}_{\ol {i}}}

\nc{\BZ}{{\mathbb{Z}}}
\nc{\al}{\alpha}
\nc{\qs}{{q}}
\nc{\lan}{\langle}
\nc{\ran}{\rangle}
\nc{\re}{{\mathrm{re}}}
\nc{\wt}{\operatorname{wt}}
\nc{\ch}{\operatorname{ch}}
\nc{\Uf}[1][\g]{U^-_q(#1)}
\nc{\Ue}{U^+_q(\g)}
\nc{\eps}{\varepsilon}
\nc{\vphi}{\varphi}
\nc{\sphi}{\varphi^*}
\nc{\seps}{\varepsilon^*}

\nc{\nn}{\nonumber}

\nc{\vp}{\varpi}
\nc{\cls}{{\operatorname{cl}}}
\nc{\Wt}{{\operatorname{Wt}}}
\nc{\Us}{U'_q(\g)}
\nc{\La}{\Lambda}
\nc{\ro}{{\rm(}}
\nc{\rf}{{\rm)}}
\nc{\norm}{{\mathrm{norm}}}
\nc{\qbox}{\quad\mbox}
\nc{\braid}{{\mathfrak{B}}}
\nc{\Ad}{\operatorname{Ad}}
\nc{\Aut}{\operatorname{Aut}}
\nc{\dt}[1]{\tilde{\tilde #1}}
\nc{\Sn}{S^{{\mathrm{norm}}}}
\nc{\aff}{{\rm{aff}}}
\nc{\rk}{{\mathrm{rk}}}
\nc{\tP}{\widetilde{P}}
\nc{\tW}{\widetilde{W}}
\nc{\Dyn}{\mathrm{Dyn}}
\nc{\tD}{\widetilde{\Delta}}
\nc{\height}{{\operatorname{ht}}}
\nc{\bl}{\bigl(}
\nc{\br}{\bigr)}
\nc{\Hecke}{\mathrm{H}}
\nc{\HA}{\Hecke^{\mathrm{A}}}
\nc{\HB}{\Hecke^{\mathrm{B}}}
\newcommand{\scbul}{{\,\raise1pt\hbox{$\scriptscriptstyle\bullet$}\,}}
\nc{\vac}{{\phi}}
\nc{\Bt}{\B_\theta(\g)}
\nc{\be}{\begin{enumerate}}
\nc{\ee}{\end{enumerate}}
\nc{\low}{{\mathrm{low}}}
\nc{\upper}{{\mathrm{up}}}
\nc{\Zodd}{\Z_{\mathrm{odd}}}
\nc{\Ft}[1][n]{\mathbb{P}\mathrm{ol}_{#1}}
\nc{\Ftf}[1][n]{\widetilde{\mathbb{P}\mathrm{ol}}_{#1}}
\nc{\KA}{\on{K}^{\mathrm{A}}}
\nc{\KB}{\on{K}^{\mathrm{B}}}
\nc{\Res}{\on{Res}}
\nc{\Fc}[1][{n,m}]{\mathbf{F}_{#1}}
\nc{\tphi}{\tilde{\varphi}}
\nc{\CO}{\mathscr{O}}
\nc{\inte}{\mathrm{int}}
\nc{\Oint}{\mathcal{O}^{\ge0}_{\inte}}
\nc{\vs}{\vspace}
\nc{\tL}{\widetilde{L}}
\nc{\tu}{\tilde{u}}
\nc{\noi}{\noindent}
\nc{\heigh}{\mathfrak{t}}
\nc{\lowest}{\mathfrak{l}}
\nc{\rootl}{\mathsf{Q}}
\nc{\cl}{{\rm{cl}}}
\nc{\uqpg}{U'_q(\mathfrak g)}
\nc{\Oh}{\widehat{\mathcal{O}}}

\nc{\hV}{\widehat{V}}

\newenvironment{rouge}
{\color{red}}
{}

\nc{\bred}{\begin{rouge}}
\nc{\ered}{\end{rouge}}
\nc{\KLR}{quiver Hecke algebra}
\nc{\KLRs}{quiver Hecke algebras}
\nc{\cor}{\mathbf{k}}
\nc{\cora}{{\cor(A)}}
\nc{\haut}{\mathrm{ht}}
\nc{\tens}{\mathop\otimes}
\nc{\gmod}{\mbox{-$\mathrm{gmod}$}}
\nc{\proj}{\mbox{-$\mathrm{proj}$}}
\nc{\gproj}{\mbox{-$\mathrm{gproj}$}}
\nc{\smod}{\mbox{-$\mathrm{mod}$}}
\nc{\nmod}{\mbox{-$\mathrm{nilmod}$}}

\nc{\h}{\mathfrak h}
\nc{\Rnorm}{R^{\rm{norm}}}
\nc{\Runiv}{R^{\rm{univ}}}
\nc{\Vhat}{\widehat{V}}
\nc{\F}{\mathcal{F}}
\def\AA{{\mathcal A}}

\def\T{{\mathcal T}}

\nc{\fd}[1][A]{\on{\mathrm{flat.dim}_{#1}}}
\nc{\bP}{{\mathbb{P}}}
\nc{\bPh}{\widehat{\mathbb{P}}}
\nc{\bK}{\widehat{\mathbb{K}}}
\nc{\bV}[1][{n}]{\widehat{V}^{\otimes{#1}}}
\nc{\bVK}[1][{n}]{\widehat{V}^{\otimes{#1}}_K}
\nc{\opp}{\mathrm{opp}}
\nc{\col}{\colon}
\nc{\bnum}{\be[{\rm(i)}]}
\nc{\oep}{\epsilon}
\nc{\qtext}{\quad\text}
\nc{\qtextq}[1]{\quad\text{#1}\quad}
\nc{\longtwoheadrightarrow}[1][]{\xymatrix{\ar@{->>}[r]^-{{#1}}&}}
\nc{\epiTo}[1][]{\longtwoheadrightarrow[{#1}]}
\nc{\epito}{\twoheadrightarrow}
\nc{\monoTo}[1][]{\xymatrix{\ar@{>->}[r]^-{{#1}}&}}
\nc{\sym}{\mathfrak{S}}
\nc{\inp}[1]{{({#1})_{\mathrm{n}}}}
\nc{\rtl}{\rootl}
\nc{\wtd}{\widetilde}
\nc{\etens}{\boxtimes}
\nc{\ds}[1]{\mathrm{d}(#1)}
\nc{\rmat}[1]{{\mathbf r}_{\mspace{-2mu}\raisebox{-.5ex}{${\scriptstyle{#1}}$}}}
\nc{\shc}{\mathcal{C}}
\nc{\Fct}{{\on{Fct}}}
\nc{\tC}{\widetilde{\shc}}
\nc{\Zp}{\Z_{\ge0}}
\nc{\tPhi}{\widetilde{\Phi}}
\nc{\tT}{{\tilde{\T}}}
\nc{\Ob}{\on{Ob}}
\nc{\bwr}{\mbox{\large$\wr$}}
\nc{\Img}{\on{Im}}
\nc{\Ab}{\mathcal{A}^{\mathrm{big}}}
\nc{\Sb}{\mathcal{S}^{\mathrm{big}}}
\nc{\As}{\mathcal{A}}
\nc{\Ss}{\mathcal{S}}
\nc{\ntens}{\widetilde{\otimes}}
\nc{\hR}{\widehat{R}}
\nc{\nconv}{\star}
\nc{\ts}{\tilde{s}}
\nc{\sho}{\mathcal{O}}
\nc{\bc}{\begin{cases}}
\nc{\ec}{\end{cases}}

\nc{\UA}{U_q'(A^{(1)}_{N-1})}
\nc{\UAtwo}{U_q'(A^{(2)}_{N-1})}
\nc{\KR}{R_K}
\nc{\cQ}{\mathcal{Q}}
\nc{\Irr}{\mathcal{I}rr}
\nc{\tQ}{\widetilde{\cQ}}
\nc{\bs}{\mathbf{s}}
\nc{\bL}{\mathbb{L}}
\nc{\KP}{{\mathrm{KP}}}
\nc{\db}{\mathsf{b}^*}
\nc{\bfa}{{\mathbf{a}}}
\nc{\bfc}{{\mathbf{c}}}
\nc{\Po}{(P_\cl)_0}
\renewcommand{\Im}{\op{Im}}
\nc{\mono}{\rightarrowtail}
\nc{\tr}{\on{tr}}
\nc{\K}{\on{K}}
\nc{\FQ}[1]{\F_Q^{(#1)}}
\nc{\bQ}{\ol{Q}}
\nc{\conv}{\mathbin{\mbox{\large $\circ$}}}
\nc{\uqm}{\uqpg\smod}

\newlength{\mylength}
\setlength{\mylength}{\textwidth}
\addtolength{\mylength}{-20ex}

\title[Symmetric quiver Hecke algebras and R-matrices III]
{Symmetric quiver Hecke algebras and R-matrices of quantum affine
algebras III}

\author[S.-J. Kang, M. Kashiwara, M. Kim, S.-j. Oh]{Seok-Jin Kang$^{1}$, Masaki Kashiwara$^{2}$,  Myungho Kim and Se-jin Oh$^{3}$}

\address{Gwanak Wiberpolis 101-1601,
         Gwanak-Ro 195, Gwanak-Gu \\ Seoul 151-811, Korea}

         \email{soccerkang@hotmail.com}

\address{Research Institute for Mathematical Sciences \\
          Kyoto University \\ Kyoto 606-8502, Japan}

         \email{masaki@kurims.kyoto-u.ac.jp}

\address{School of Mathematics, Korea Institute for Advanced Study \\ Seoul 130-722, Korea}
         \email{mhkim@kias.re.kr}

\address{School of Mathematics, Korea Institute for Advanced Study \\ Seoul 130-722, Korea}
         \email{sejin092@gmail.com}

\thanks{$^{1}$This work was supported by NRF Grant  \# 2014-021261 and by NRF Grant \# 2013-055408}

\thanks{$^{2}$This work was partially supported by Grant-in-Aid for
Scientific Research (B) 22340005, Japan Society for the Promotion of
Science.}

\thanks{$^{3}$This work was supported by BK21 PLUS SNU Mathematical Sciences Division}

\keywords{Quantum affine algebra, Quiver Hecke algebra, Quantum group}

\subjclass[2010]
{81R50, 16G, 16T25,17B37}

\date{}

\begin{document}

\begin{abstract}

Let $\CC^0_{\g}$ be the category of finite-dimensional integrable
modules over the quantum affine algebra $U_{q}'(\g)$ and let
$R^{A_\infty}\gmod$ denote the category of finite-dimensional graded
modules over the quiver Hecke algebra of type $A_{\infty}$. In this
paper, we investigate the relationship between the categories
$\CC^0_{A_{N-1}^{(1)}}$ and $\CC^0_{A_{N-1}^{(2)}}$ by constructing
the generalized quantum affine Schur-Weyl duality functors
$\F^{(t)}$ from $R^{A_\infty}\gmod$ to $\CC^0_{A_{N-1}^{(t)}}$
$(t=1,2)$.

\end{abstract}

\maketitle

\section*{Introduction}
The {\it quiver Hecke algebra} $R$, introduced independently by
Khovanov-Lauda \cite{KL09} and Rouquier \cite{R08}, provides a
categorification of the negative half $U^-_q(\mathsf{g})$ of a
quantum group $U_q(\mathsf{g})$. Moreover, its cyclotomic quotients
$R^\Lambda$, depending on  dominant integral weights $\Lambda$, also
provide  a categorification of the integrable highest weight modules
$V_q(\Lambda)$ over $U_q(\mathsf{g})$ \cite{KK11}. Recall that the
cyclotomic quotients of an affine Hecke algebra give a
categorification of integrable highest weight
$U(A^{(1)}_{N-1})$-modules. Thus the quiver Hecke algebras play the
role of affine Hecke algebras in the representation theory of {\it
all symmetrizable} quantum groups.

In \cite{CP96B,Che,GRV94}, Chari-Pressley, Cherednik and
Ginzburg-Reshetikhin-Vasserot constructed the {\it quantum affine
Schur-Weyl duality functor} that relates the category of
finite-dimensional modules over an affine Hecke algebra and the
category of finite-dimensional integrable
$U_{q}'(A_{N-1}^{(1)})$-modules. In \cite{KKK13A}, the first three
authors of this paper constructed a functor $\F$ from the category
of finite-dimensional graded modules over a {\em symmetric} quiver
Hecke algebra $R$ to the category of finite-dimensional integrable
modules over {\it any} quantum affine algebra $U_{q}'(\g)$. Here,
the quiver Hecke algebra $R$ is determined by a family of good
$U_q'(\g)$-modules. In this context, the quiver Hecke algebras can
be thought of as a generalization of affine Hecke algebras, which
gives the {\it generalized quantum affine Schur-Weyl duality
functor} $\F$.

The representation theory of quantum affine algebras  has been
extensively  investigated with various approaches (see, for example,
\cite{CP95,FR99,Her101,Kas02,Nak041}). By the work of \cite{KKK13A},
we propose a new approach for studying the representations of
quantum affine algebras through the representation theory of quiver
Hecke algebras.

Let $\CC^0_{\g}$ denote the category of finite-dimensional
integrable modules over the quantum affine algebra $U_{q}'(\g)$, and
let $R^{A_\infty}\gmod$ denote the category of finite-dimensional
graded modules over the quiver Hecke algebra of type $A_{\infty}$.
The purpose of this paper is to investigate the relationship between
the categories $\CC^0_{A_{N-1}^{(1)}}$ and $\CC^0_{A_{N-1}^{(2)}}$
(see Table~\ref{table:Dynkin}) by constructing exact functors
$\F^{(t)}:R^{A_\infty}\gmod \rightarrow \CC^0_{A_{N-1}^{(t)}}$
$(t=1,2)$ (see also \cite{Her101}).

$$
\xymatrix@R=5ex@C=5ex{ & R^{A_\infty}\gmod \ar[dl]_{\mathcal{F}^{(1)}} \ar[dr]^{\mathcal{F}^{(2)}} \\
\mathscr{C}^0_{A^{(1)}_{N-1}} \ar@{<.>}[rr] &&
\mathscr{C}^0_{A^{(2)}_{N-1}.} }$$

To construct such functors, we first choose a family of good
$U_{q}'(\g)$-modules and study the distribution of poles of
normalized $R$-matrices between them. Then by the general argument
given in \cite{KKK13A}, we obtain the generalized quantum affine
Schur-Weyl duality functor $\F: R\gmod \rightarrow \CC^0_{\g}$. In
particular, it was shown in \cite{KKK13A} that the family of good
$U_{q}'(A_{N-1}^{(1)})$-modules $\{V(\varpi_{1})_{q^{2s}} \mid s \in
\Z \}$ gives the functor $\F^{(1)}$.

In this paper, based on the results of \cite{Oh14} on the normalized
$R$-matrices of $U'_q(A^{(2)}_{N-1})$-modules,  we prove that  the
family of good $U'_q(A^{(2)}_{N-1})$-modules $\{
V(\varpi_1)_{q^{2s}} \mid s \in \Z \}$   yields a quiver whose
underlying graph is of type $A_\infty$, and  then we construct the
exact functor  $\F^{(2)}\col R^{A_\infty}\gmod \rightarrow
\mathscr{C}^0_{A^{(2)}_{N-1}}$. Through the exact functors
$\F^{(t)}$ $(t=1,2)$, one can observe that the categories
$\mathscr{C}^0_{A^{(1)}_{N-1}}$  and $\mathscr{C}^0_{A^{(2)}_{N-1}}$
have many similar properties (for example, see Proposition
\ref{prop:CP} and Corollary \ref{cor:surjhoms}). Note that some of
these similarities have been already observed by Hernandez
\cite{Her101} by a different approach.

We prove that the functor $\F^{(2)}\col R^{A_\infty}\gmod
\rightarrow \mathscr{C}^0_{A^{(2)}_{N-1}}$ factors through a certain
localization
 $\T_N$.
Furthermore,  the induced functor
$\tF^{(2)}\col\T_N\to\CC^0_{A^{(2)}_{N-1}}$ gives a ring isomorphism
$$K(\T_N)/(q-1)K(\T_N) \isoto K(\CC^0_{A^{(2)}_{N-1}})$$
as in the case of the Grothendieck ring $K(\CC^0_{A^{(1)}_{N-1}})$
in \cite{KKK13A}. Hence we obtain the diagram
$$
\xymatrix@R=4ex@C=7ex{ & R^{A_\infty}\gmod \ar[ddl]_{\mathcal{F}^{(1)}} \ar[ddr]^{\mathcal{F}^{(2)}} \ar@{->>}[d] \\
& \T_N \ar[dl]^{\tF^{(1)}} \ar[dr]_{\tF^{(2)}} \\
\mathscr{C}^0_{A^{(1)}_{N-1}} \ar@{<.>}[rr] &&
\mathscr{C}^0_{A^{(2)}_{N-1}}, }$$ where $\tF^{(t)}$ gives a
bijection between the simple modules (up to degree shift and
isomorphism) in $\T_N$ and the  simple modules (up to isomorphism)
in $\mathscr{C}^0_{A^{(t)}_{N-1}}$ $(t=1,2)$. With this approach, we
prove that the induced functors $\tF^{(1)}$ and $\tF^{(2)}$ give the
correspondence between the simple modules in
$\mathscr{C}^0_{A^{(1)}_{N-1}}$ and the  simple modules in
$\mathscr{C}^0_{A^{(2)}_{N-1}}$, which preserves their dimensions
(Theorem~\ref{th: dim}).

Let us compare this with  one of the results in \cite{Her101}.
First, Hernandez defined the {\em twisted $q$-character homomorphism
$\chi_q^\sigma$} from the Grothendieck ring of finite-dimensional
integrable modules over a twisted quantum affine algebra to a
certain polynomial ring. Note that it is an analogue of the
$q$-character homomorphism $\chi_q$ for untwisted quantum affine
algebras in \cite{FR99}. Then he found a ring homomorphism from the
codomain of $\chi_q$ to the codomain of $\chi_q^\sigma$, which
induces an isomorphism between the image of the $\chi_q$ and that of
$\chi_q^\sigma$. During its proof, he showed that this isomorphism
sends the $q$-characters of Kirillov-Reshetikhin modules to the
twisted $q$-characters of Kirillov-Reshetikhin modules. But it is
not known whether the isomorphism sends the $q$-characters of simple
modules to the twisted $q$-characters of simple modules or not. We
expect that the isomorphism in \cite[Theorem 4.15]{Her101} coincides
with ours in Corollary \ref{cor: iso bwn gro ring}. Since the
results in \cite{Her101} cover not only type $A$ but also other
types, one may expect that there are similar correspondences between
untwisted and twisted quantum affine algebras of other types. Our
another paper \cite{KKKO15} is initiated by this observation and
provides a correspondence between certain subcategories of
$\mathscr{C}^0_\g$ over untwisted and twisted quantum affine
algebras of type $A$ and $D$ through the representation theory of
quiver Hecke algebras.

This paper is organized as follows. In Section
\ref{sec:backgrounds}, we briefly review the results of
\cite{KKK13A} on the generalized quantum affine Schur-Weyl duality
functors. In Section \ref{sec:comparison}, we  compare the
denominators of normalized $R$-matrices and the homomorphisms
between fundamental representations over $U_q(A^{(2)}_{N-1})$. This
comparison provides the main ingredients for the construction of
exact functors $\F^{(t)}$ $(t=1,2)$. In Section \ref{sec:F2}, we
construct the exact functor $\F^{(2)}: R^{A_\infty}\gmod \rightarrow
\mathscr{C}^0_{A^{(2)}_{N-1}}$ and investigate the relationship
between the categories $\mathscr{C}^0_{A^{(t)}_{N-1}}$ $(t=1,2)$ via
 $R^{A_\infty}\gmod$.

\section*{Convention}

\bnum
\item All the algebras and rings in this paper are assumed to have a unit,
and modules over them are unitary.
\item For a ring $A$,  an $A$-module means a left $A$-module.
\item For a statement $P$, $\delta(P)$ is $1$ if $P$ is true and $0$ if $P$ is false.
\item For a ring $A$, we denote by $\Mod(A)$ the category of $A$-modules.
When $A$ is an algebra over a field $\cor$, we denote by $A\smod$
the category of $A$-modules which are finite-dimensional over
$\cor$.

If $A$ is a graded ring, then we denote by $\Modg(A)$, $A\gmod$
their graded version with homomorphism preserving the grading as
morphisms. They are also abelian categories.
\item For a ring $A$, we denote by $A^\times$ the set of
invertible elements of $A$.
\item For an abelian category $\mathcal C$, we denote by $K(\mathcal C)$ the Grothendieck group of $\mathcal C$.
\ee

\section{Symmetric quiver Hecke algebras and quantum affine algebras} \label{sec:backgrounds}
\subsection{Cartan datum and quantum groups}\label{subsec:Cartan}
In this subsection, we recall the definition of quantum groups. Let
$I$ be an index set. A \emph{Cartan datum} is a quintuple $(A,P,
\Pi,P^{\vee},\Pi^{\vee})$ consisting of
\begin{enumerate}[(a)]
\item an integer-valued matrix $A=(a_{ij})_{i,j \in I}$,
called \emph{the symmetrizable generalized Cartan matrix},
 which satisfies
\bni
\item $a_{ii} = 2$ $(i \in I)$,
\item $a_{ij} \le 0 $ $(i \neq j)$,
\item $a_{ij}=0$ if $a_{ji}=0$ $(i,j \in I)$,
\item there exists a diagonal matrix
$D=\text{diag} (\mathsf s_i \mid i \in I)$ such that $DA$ is
symmetric and $\mathsf s_i$ are positive integers,
\end{enumerate}

\item a free abelian group $P$, called the \emph{weight lattice},
\item $\Pi= \{ \alpha_i \in P \mid \ i \in I \}$, called
the set of \emph{simple roots},
\item $P^{\vee}\seteq\Hom(P, \Z)$, called the \emph{co-weight lattice},
\item $\Pi^{\vee}= \{ h_i \ | \ i \in I \}\subset P^{\vee}$, called
the set of \emph{simple coroots},
\end{enumerate}

\noindent satisfying the following properties:
\begin{enumerate}
\item[(i)] $\langle h_i,\alpha_j \rangle = a_{ij}$ for all $i,j \in I$,
\item[(ii)] $\Pi$ is linearly independent,
\item[(iii)] for each $i \in I$, there exists $\Lambda_i \in P$ such that
           $\langle h_j, \Lambda_i \rangle =\delta_{ij}$ for all $j \in I$.
\end{enumerate}
We call $\Lambda_i$ the \emph{fundamental weights}. The free abelian
group $\rootl\seteq\soplus_{i \in I} \Z \alpha_i$ is called the
\emph{root lattice}. Set $\rootl^{+}= \sum_{i \in I} \Z_{\ge 0}
\alpha_i\subset\rootl$ and $\rootl^{-}= \sum_{i \in I} \Z_{\le0}
\alpha_i\subset\rootl$. For $\beta=\sum_{i\in I}m_i\al_i\in\rootl$,
we set $|\beta|=\sum_{i\in I}|m_i|$.

Set $\mathfrak{h}=\Q \otimes_\Z P^{\vee}$. Then there exists a
symmetric bilinear form  $( \ , \ )$  on $\mathfrak{h}^*$ satisfying
$$ (\alpha_i , \alpha_j) =\mathsf s_i a_{ij} \quad (i,j \in I)
\quad\text{and $\lan h_i,\lambda\ran=
\dfrac{2(\alpha_i,\lambda)}{(\alpha_i,\alpha_i)}$ for any
$\lambda\in\mathfrak{h}^*$ and $i \in I$}.$$

Let $q$ be an indeterminate. For each $i \in I$, set $q_i =
q^{\,\mathsf s_i}$.

\begin{definition} \label{def:qgroup}
The {\em quantum group} $U_q(\g)$  with  a Cartan datum
$(A,P,\Pi,P^{\vee}, \Pi^{\vee})$ is the  algebra  over $\mathbb
Q(q)$ generated by $e_i,f_i$ $(i \in I)$ and $q^{h}$ $(h \in
P^\vee)$ satisfying following relations:
\begin{equation*}
\begin{aligned}
& q^0=1,\quad q^{h} q^{h'}=q^{h+h'}\ \text{for $h,h' \in P$,}\\
& q^{h}e_i q^{-h}= q^{\lan h, \alpha_i\ran} e_i, \quad
        q^{h}f_i q^{-h} = q^{-\lan h, \alpha_i\ran} f_i \quad
        \text{for $ h \in P^\vee$, $i \in I$,} \\
& e_if_j - f_je_i = \delta_{ij} \dfrac{K_i -K^{-1}_i}{q_i- q^{-1}_i
}\quad\text{where $K_i=q^{\mathsf s_i h_i}$, }\\
& \sum^{1-a_{ij}}_{r=0} (-1)^r
        e^{(1-a_{ij}-r)}_i e_j e^{(r)}_i
 =\sum^{1-a_{ij}}_{r=0} (-1)^r
        f^{(1-a_{ij}-r)}_if_j f^{(r)}_i=0 \quad \text{ if } i \ne j.
\end{aligned}
\end{equation*}
\end{definition}

Here, we set $[n]_i =\dfrac{ q^n_{i} - q^{-n}_{i} }{ q_{i} -
q^{-1}_{i} },\quad
  [n]_i! = \prod^{n}_{k=1} [k]_i$, $e_i^{(m)}=\dfrac{e_i^n}{[n]_i!}$ and $f_i^{(n)}=\dfrac{f_i^n}{[n]_i!}$
  for all $n \in Z_{\ge 0}$, $i \in I$.

\subsection{Quiver Hecke algebras}
We recall the definition of \KLRs\ associated with a given Cartan
datum $(A, P, \Pi, P^{\vee}, \Pi^{\vee})$.

Let $\cor$ be a commutative ring. For $i,j\in I$ such that
$i\not=j$, set Let us take  a family of polynomials
$(Q_{ij})_{i,j\in I}$ in $\cor[u,v]$ which are of the form
\begin{equation} \label{eq:Q}
Q_{ij}(u,v) =
\delta(i \ne j)\sum\limits_{ \substack{ (p,q)\in \Z^2_{\ge0} \\
(\al_i , \al_i)p+(\al_j , \al_j)q=-2(\al_i , \al_j)}} t_{i,j;p,q}
u^p v^q
\end{equation}
with $t_{i,j;p,q}\in\cor$, $t_{i,j;p,q}=t_{j,i;q,p}$ and
$t_{i,j:-a_{ij},0} \in \cor^{\times}$. Thus we have
$Q_{i,j}(u,v)=Q_{j,i}(v,u)$.

We denote by $\sym_{n} = \langle s_1, \ldots, s_{n-1} \rangle$ the
symmetric group on $n$ letters, where $s_i\seteq (i, i+1)$ is the
transposition of $i$ and $i+1$. Then $\sym_n$ acts on $I^n$ by place
permutations.

For $n \in \Z_{\ge 0}$ and $\beta \in \rootl^+$ such that $|\beta| =
n$, we set
$$I^{\beta} = \set{\nu = (\nu_1, \ldots, \nu_n) \in I^{n}}%
{ \alpha_{\nu_1} + \cdots + \alpha_{\nu_n} = \beta }.$$

\begin{definition}
For $\beta \in \rootl^+$ with $|\beta|=n$, the {\em
Khovanov-Lauda-Rouquier algebra}  $R(\beta)$  at $\beta$ associated
with a Cartan datum $(A,P, \Pi,P^{\vee},\Pi^{\vee})$ and a matrix
$(Q_{ij})_{i,j \in I}$ is the $\cor$-algebra generated by the
elements $\{ e(\nu) \}_{\nu \in  I^{\beta}}$, $ \{x_k \}_{1 \le k
\le n}$, $\{ \tau_m \}_{1 \le m \le n-1}$ satisfying the following
defining relations:
\begin{equation*} \label{eq:KLR}
\begin{aligned}
& e(\nu) e(\nu') = \delta_{\nu, \nu'} e(\nu), \ \
\sum_{\nu \in  I^{\beta} } e(\nu) = 1, \\
& x_{k} x_{m} = x_{m} x_{k}, \ \ x_{k} e(\nu) = e(\nu) x_{k}, \\
& \tau_{m} e(\nu) = e(s_{m}(\nu)) \tau_{m}, \ \ \tau_{k} \tau_{m} =
\tau_{m} \tau_{k} \ \ \text{if} \ |k-m|>1, \\
& \tau_{k}^2 e(\nu) = Q_{\nu_{k}, \nu_{k+1}} (x_{k}, x_{k+1})
e(\nu), \\
& (\tau_{k} x_{m} - x_{s_k(m)} \tau_{k}) e(\nu) = \begin{cases}
-e(\nu) \ \ & \text{if} \ m=k, \nu_{k} = \nu_{k+1}, \\
e(\nu) \ \ & \text{if} \ m=k+1, \nu_{k}=\nu_{k+1}, \\
0 \ \ & \text{otherwise},
\end{cases} \\
& (\tau_{k+1} \tau_{k} \tau_{k+1}-\tau_{k} \tau_{k+1} \tau_{k}) e(\nu)\\
& =\begin{cases} \dfrac{Q_{\nu_{k}, \nu_{k+1}}(x_{k}, x_{k+1}) -
Q_{\nu_{k}, \nu_{k+1}}(x_{k+2}, x_{k+1})} {x_{k} - x_{k+2}}e(\nu) \
\ & \text{if} \
\nu_{k} = \nu_{k+2}, \\
0 \ \ & \text{otherwise}.
\end{cases}
\end{aligned}
\end{equation*}
\end{definition}

The above relations become homogeneous by assigning
\begin{equation*} \label{eq:Z-grading}
\deg e(\nu) =0, \quad \deg\, x_{k} e(\nu) = (\alpha_{\nu_k} ,
\alpha_{\nu_k}), \quad\deg\, \tau_{l} e(\nu) = - (\alpha_{\nu_l} ,
\alpha_{\nu_{l+1}}),
\end{equation*}
and hence $R( \beta )$ is $\Z$-gradable.

For an element $w$ of the symmetric group $\sym_n$, let us choose  a
reduced expression $w=s_{i_1}\cdots s_{i_\ell}$, and set
$$\tau_w=\tau_{i_1}\cdots \tau_{i_\ell}.$$ In general, it depends on
the choice of reduced expressions of $w$. Then we have
$$R(\beta)=\soplus_{\nu\in  I^{\beta} , \;w\in\sym_n}\cor[x_1,\ldots, x_n]e(\nu)\tau_w.$$

 For a graded $R(\beta)$-module $M=\bigoplus_{k \in \Z} M_k$, we define
$qM =\bigoplus_{k \in \Z} (qM)_k$, where
 \begin{align*}
 (qM)_k = M_{k-1} & \ (k \in \Z).
 \end{align*}
We call $q$ the \emph{grade shift functor} on the category of graded
$R(\beta)$-modules.

\medskip
For $\beta, \gamma \in \rootl^+$ with $|\beta|=m$, $|\gamma|= n$,
 set
$$e(\beta,\gamma)=\displaystyle\sum_{\substack%
{\nu \in I^{m+n}, \\ (\nu_1, \ldots ,\nu_m) \in I^{\beta},\\
(\nu_{m+1}, \ldots ,\nu_{m+n}) \in I^{\gamma}}} e(\nu) \in
R(\beta+\gamma).$$ Then $e(\beta,\gamma)$ is an idempotent. Let \eq
R( \beta)\tens R( \gamma  )\to e(\beta,\gamma)R(
\beta+\gamma)e(\beta,\gamma) \label{eq:embedding} \eneq be the
$\cor$-algebra homomorphism given by
\begin{equation*}
\begin{aligned}
& e(\mu)\tens e(\nu)\mapsto e(\mu*\nu) \ \ (\mu\in I^{\beta}),\\
& x_k\tens 1\mapsto x_ke(\beta,\gamma) \ \  (1\le k\le m), \\
& 1\tens x_k\mapsto x_{m+k}e(\beta,\gamma) \ \  (1\le k\le n), \\
& \tau_k\tens 1\mapsto \tau_ke(\beta,\gamma) \ \  (1\le k<m), \\
& 1\tens \tau_k\mapsto \tau_{m+k}e(\beta,\gamma) \ \  (1\le k<n,
\end{aligned}
\end{equation*}
where  $\mu*\nu$ is the concatenation of $\mu$ and $\nu$; i.e.,
$\mu*\nu=(\mu_1,\ldots,\mu_m,\nu_1,\ldots,\nu_n)$.

\medskip
For a  $R(\beta)$-module $M$ and a  $R(\gamma)$-module $N$, we
define the \emph{convolution product} $M\conv N$ by
$$M\conv N \seteq R(\beta + \gamma) e(\beta,\gamma)
\tens_{R(\beta )\otimes R( \gamma)}(M\otimes N). $$

\subsection{$R$-matrices with spectral parameters}
 For $|\beta|=n$ and $1\le a<n$,  we define $\vphi_a\in R( \beta)$
by \eq&&\ba{l}
  \vphi_a e(\nu)=
\begin{cases}
  \bl\tau_ax_a-x_{a}\tau_a\br e(\nu) & \text{if $\nu_a=\nu_{a+1}$,} \\[2ex]
\tau_ae(\nu)& \text{otherwise.}
\end{cases}
\label{def:int} \ea \eneq They are called the {\em intertwiners}.
Since $\{\vphi_k\}_{1\le k\le n-1}$ satisfies the braid relation, we
have a well-defined element $\vphi_w \in R(\beta)$ for each $w \in
\sym_n$.

 For $m,n\in\Z_{\ge0}$,
we set \eqn &&\sym_{m,n} \seteq \set{w\in\sym_{m+n}}{\text{
$w(i)<w(i+1)$ for any $i\not=m$}}. \eneqn For example, \eqn
&&w[{m,n}](k)=\begin{cases}k+n&\text{if $1\le k\le m$,}\\
k-m&\text{if $m<k\le m+n$.}\end{cases} \eneqn is an element in
$\sym_{m,n}$.

Let $\beta,\gamma\in \rtl^+$ with $|\beta|=m$, $|\gamma|=n$ and let
$M$ be an $R(\beta)$-module and $N$ an $R(\gamma)$-module. Then the
map
$$M\tens N\to q^{(\beta,\gamma)-2\inp{\beta,\gamma}}N\conv M$$ given by
$$u\tens v\longmapsto \vphi_{w[n,m]}(v\tens u)$$
is an  $R( \beta,\gamma )$-module homomorphism by \cite[Lemma
1.3.1]{KKK13A}, and it extends to an $R( \beta +\gamma)$-module
homomorphism \eq &&R_{M,N}\col M\conv N\To
q^{(\beta,\gamma)-2\inp{\beta,\gamma}}N\conv M, \eneq where the
symmetric bilinear form $\inp{{\scbul,\scbul}}$ on $\rtl$ is given
by $\inp{\al_i,\al_j}=\delta_{ij}.$

\begin{definition} A \KLR\ $R( \beta )$ is {\em symmetric} if $Q_{i,j}(u,v)$ is a
polynomial in $\cor[u-v]$ for all $i,j \in \supp{\beta}$. Here
$\supp(\beta) = \set{i \in I}{ n_i \ne 0 \text{ for } \beta =
\sum_{i \in I}n_i\alpha_i }.$ \end{definition}

\medskip
From now on, we assume that \KLRs\ are symmetric. Let $z$ be an
indeterminate  which is  homogeneous of degree $2$, and let $\psi_z$
be the algebra homomorphism \eqn &&\psi_z\col R( \beta )\to
\cor[z]\tens R( \beta ) \eneqn given by
$$\psi_z(x_k)=x_k+z,\quad\psi_z(\tau_k)=\tau_k, \quad\psi_z(e(\nu))=e(\nu).$$

For an $R( \beta )$-module $M$, we denote by $M_z$ the
$\bl\cor[z]\tens R( \beta )\br$-module $\cor[z]\tens M$ with the
action of $R( \beta )$ twisted by $\psi_z$. Namely,
\begin{equation}\label{eq:spt1}
\begin{aligned}
& e(\nu)(a\tens u)=a\tens e(\nu)u, \\
& x_k(a\tens u)=(za)\tens u+a\tens (x_ku), \\
& \tau_k(a\tens u)=a\tens(\tau_k u)
\end{aligned}
\end{equation}
for  $\nu\in I^\beta$,  $a\in \cor[z]$ and $u\in M$. For $u\in M$,
we sometimes denote by $u_z$ the corresponding element $1\tens u$ of
the $R( \beta )$-module  $M_z$.

For a non-zero $R(\beta)$-module $M$ and a non-zero
$R(\gamma)$-module $N$,
\eq&&\parbox{70ex}{%
let $s$ be the order of zeroes of $R_{M_z,N_{z'}}\col  M_z\conv
N_{z'}\To q^{(\beta,\gamma)-2\inp{\beta,\gamma}}N_{z'}\conv M_z$;
i.e., the largest non-negative integer such that the image of
$R_{M_z,N_{z'}}$ is contained in $(z'-z)^s
q^{(\beta,\gamma)-2\inp{\beta,\gamma}}N_{z'}\conv M_z$.}
\label{def:s} \eneq Note that \cite[Proposition 1.4.4 (iii)]{KKK13A}
shows that
 such an $s$ exists   and
 $s\le \inp{\beta,\gamma}$.

\begin{definition} For a non-zero $R(\beta)$-module $M$ and a non-zero
$R(\gamma)$-module $N$, we set
$$\ds{M,N}\seteq(\beta,\gamma)-2\inp{\beta,\gamma}+2s,$$ and define
$$\rmat{M,N}\col M\conv N\to q^{\ds{M,N}}N\conv M$$
by  $$\rmat{M,N} = \bl (z'-z)^{-s}R_{M_z,N_{z'}}\br\vert_{z=z'=0}.$$
\end{definition}
 By \cite[Proposition 1.4.4 (ii)]{KKK13A}, the morphism $\rmat{M,N}$ does not vanish
if $M$ and $N$ are non-zero.

For $\beta_1,\ldots,\beta_t\in\rtl^+$, a sequence of
$R(\beta_k)$-modules $M_k$ ($k=1,\ldots,t$) and $w\in \sym_t$, we
set $d=\sum\limits\ds{M_i,M_{j}}$, where the summation ranges over
the set
$$\set{(i,j)}{1\le i<j\le t, w(i)>w(j)}.$$
We define \eq &&\rmat{M_1,\ldots,M_t}^{\quad w} =\rmat{\{M_s\}_{1\le
s\le t}}^{\quad w}\col M_1\conv \cdots\conv M_t \to q^dM_{w(1)}\conv
\cdots\conv M_{w(t)} \eneq by induction on the length of $w$ as
follows: \eqn &&\hs{-3.5ex}\rmat{\{M_a\}_{1\le a\le t}}^{\ w}
=\begin{cases}
\id_{M_1\conv \cdots\conv M_t}&\text{if $w=e$,}\\[2ex]
\ba{l}\hs{-1ex}\rmat{\{M_{s_k(a)}\}_{1\le a\le t}}^{\ ws_k} \circ
\bl M_{1}\conv \cdots \conv M_{k-1}\conv \\ \hs{3ex}
 \rmat{M_{k}, M_{k+1}}  \conv  M_{k+2}\conv\cdots\conv M_{t}\br\ea
&\text{if $w(k)>w(k+1)$.}
\end{cases}
\eneqn Then it does not depend on the choice of $k$ and
$\rmat{M_1,\ldots,M_t}^{\ w}$ is well-defined, because the
homomorphisms $\rmat{M,N}$ satisfy the Yang-Baxter
equation(\cite[\S1.4]{KKK13A}).

Similarly, we define \eq R_{M_1,\ldots, M_t}^w\col
 M_1\conv \cdots \conv M_t
\to q^bM_{w(1)}\conv \cdots \conv M_{w(t)}, \eneq where
$b=\sum\limits_{ \substack{1\le k<k'\le t,\\ w(k)>w(k')} }
(\beta_k,\beta_{k'})-2\inp{\beta_k,\beta_{k'}}$.

We set \eq &&\rmat{M_1,\ldots,
M_t}\seteq\rmat{M_1,\ldots,M_t}^{\hs{2ex} w_t} \quad \text{and}
\quad R_{M_1,\ldots, M_t}\seteq R_{M_1,\ldots, M_t}^{\hs{2ex} w_t},
\eneq where $w_t$ is the longest element of $\sym_t$.

\subsection{Quantum affine algebras}
In this subsection, we briefly review the representation theory of
quantum affine algebras following \cite{AK,Kas02}. When concerned
with quantum affine algebras, we take the algebraic closure of
$\C(q)$ in $\cup_{m >0}\C((q^{1/m}))$ as  the  base field $\cor$.

 We choose $0\in I$ as the leftmost vertices in the tables
in \cite[{pages 54, 55}]{Kac} except $A^{(2)}_{2n}$-case in which
case we take the longest simple root as $\al_0$. Set $I_0
=I\setminus\{0\}$.

The weight lattice $P$ is given by
\begin{align*}
  P = \Bigl(\soplus_{i\in I}\Z \La_i\Bigr) \oplus \Z \delta,
\end{align*}
and the simple roots are given by
$$\al_i=\sum_{j\in I}a_{ji}\La_j+\delta(i=0)\delta.$$
The weight $\delta$ is called the imaginary root. There exist
$d_i\in\Z_{>0}$ such that
$$\delta=\sum_{i\in I}d_i\al_i.$$
Note that $d_i=1$ for $i=0$. The simple coroots $h_i\in
P^\vee\seteq\Hom_{\Z} (P,\Z)$ are given by \eqn && \lan
h_i,\La_j\ran=\delta_{ij},\quad \lan h_i,\delta\ran=0. \eneqn Hence
we have $\lan h_i,\al_j\ran=a_{ij}$.

Let $c=\sum_{i\in I}c_i h_i$ be a unique element such that
$c_i\in\Z_{>0}$ and
$$\Z\, c=\set{h\in\soplus\nolimits_{i\in I}\Z h_i}{\text{$\lan h,\al_i\ran=0$ for any $i\in I$}}.$$
Let us take a $\Q$-valued symmetric bilinear form  $(\scbul,\scbul)$
on $P$ such that
$$\text{
$\lan h_i,\la\ran=\dfrac{2(\al_i,\la)}{(\al_i,\al_i)}$ and $(\delta,
\la)=\lan c,\la\ran $ for any $\la\in P$.}
$$
Let $q$ be an indeterminate. For each $i \in I$, set $q_i =
q^{(\al_i,\al_i)/2}$.

Let us denote by $U_q(\g)$ the quantum group associated with the
affine Cartan datum $(A,P, \Pi,P^{\vee},\Pi^{\vee})$.
 We denote by $\uqpg$ the subalgebra of $U_q(\mathfrak{g})$ generated by $e_i,f_i,K_i^{\pm1}(i=0,1,,\ldots,n)$. We call $\uqpg$ the \emph{quantum affine algebra} associated with $(A,P, \Pi,P^{\vee},\Pi^{\vee})$.

The algebra $\uqpg$ has a Hopf algebra structure with the coproduct:
\begin{align}
\Delta(K_i)=K_i\tens K_i, \quad \Delta(e_i)=e_i\tens K_i^{-1}+1\tens
e_i,\quad \Delta(f_i)=f_i\tens 1+K_i\tens f_i.
\end{align}

Set
$$P_\cl=P/\Z \delta$$
and call it the {\em classical weight lattice}. Let $\cl\col P\to
P_\cl$ be the projection. Then $P_\cl=\soplus\nolimits_{i\in
I}\Z\,\cl(\La_i)$. Set $P^0_\cl = \set{\la\in P_\cl}{\lan
c,\la\ran=0}\subset P_\cl$.

A $\uqpg$-module $M$ is called an {\em integrable module} if

\hs{5ex}\parbox[t]{70ex}{
 \bna
\item $M$ has a weight space decomposition
$$M = \bigoplus_{\lambda \in P_\cl} M_\lambda,$$
where  $M_{\lambda}= \set{ u \in M }{%
\text{$K_i u =q_i^{\lan h_i , \lambda \ran} u$ for all $i\in I$}}$,
\item  the actions of
 $e_i$ and $f_i$ on $M$ are locally nilpotent for any $i\in I$.
\ee}

Let us denote by $\CC_\g$ the abelian tensor category of
finite-dimensional integrable $\uqpg$-modules.

If $M$ is a  simple module in $\CC_\g$, then there exists a non-zero
vector $u\in M$ of weight $\la\in P_\cl^0$ such that $\la $ is
dominant (i.e., $\lan h_i,\la\ran\ge0$ for any $i\in I_0$) and all
the weights of $M$ lie in $\la-\sum_{i\in I_0}\Z_{\ge0}\al_i$. We
say that $\la$ is the {\em dominant extremal weight} of $M$ and $u$
is a {\em dominant extremal vector} of $M$. Note that a dominant
extremal vector of $M$ is unique up to a constant multiple.

Let $M$ be an integrable $\uqpg$-module. Then the {\em affinization}
$M_\aff$ of $M$ is the   $P$-graded $\uqpg$-module
$$M_\aff=\soplus_{\la\in P}(M_\aff)_{\la}\qtext{with $(M_\aff)_{\la}=M_{\cl(\la)}$.}
$$
Let us denote by  $\cl\col M_\aff \to M$ the canonical $\cor$-linear
homomorphism. The actions  $$e_i\col (M_\aff)_\la  \to
(M_\aff)_{\la+\al_i}
 \quad \text{and} \quad f_i\col(M_\aff)_\la\to (M_\aff)_{\la-\al_i}$$ are defined
  in a way  that
they commute with $\cl\col M_\aff\to M$.

  We  denote by $z_M\col M_\aff\to M_\aff$ the $\uqpg$-module
automorphism of weight $\delta$ defined by $(M_\aff)_\la
\overset{\sim}{\to} M_{\cl(\la)} \overset{\sim}{\to}
(M_\aff)_{\la+\delta}$. For $x \in \cor^\times$, we define
 $$M_x   \seteq M_\aff/(z_M-x)M_\aff.$$

We embed $P_\cl$ into $P$ by $\iota\col P_\cl\to P$ which is given
by $\iota(\cl(\La_i))=\La_i$. For $u\in M_\la$ ($\la\in P_\cl$), let
us denote by $u_z\in (M_\aff)_{\iota(\la)}$ the element such that
$\cl(u_z)=u$. With this notation,  we have
$$e_i(u_z)=z^{\delta_{i,0}}(e_iu)_z, \quad
f_i(u_z)=z^{-\delta_{i,0}}(f_iu)_z, \quad K_i(u_z)=(K_iu)_z.
$$
Then we have $M_\aff\simeq\cor[z,z^{-1}]\tens M$.

 Let $M$ be an integrable  $U_q(\g)$-module.
  A weight vector $u \in M_\lambda \ (\lambda \in P)$ is called
an {\em extremal vector} if there exists  a family of vectors
$\{u_w\}_{w\in W}$ satisfying the following properties: \eq
&&\text{$u_w=u$ for $w=e$,} \nonumber \\
&& \hbox{if $\langle h_i,w\lambda\rangle\ge 0$, then
$e_iu_w=0$ and $f_i^{(\langle h_i,w\lambda\rangle)}u_w=u_{s_iw}$,} \nonumber \\
&&\hbox{if $\langle h_i,w\lambda\rangle\le 0$, then $f_iu_w=0$ and
$e_i^{( -  \langle h_i,w\lambda\rangle)} u_w=u_{s_iw}$.} \nonumber
\eneq

If such $\{u_w\}_{w\in W}$ exists, then it is unique and $u_w$ has
weight $w\lambda$.

For $\lambda \in P$, let us denote by $W(\lambda)$ the
$U_q(\g)$-module generated by $u_\lambda$ with the defining relation
that $u_\lambda$ is an extremal vector of weight $\lambda$ (see
\cite{Kas94}). This is in fact a set of infinitely many linear
relations on $u_\lambda$.

 Set $\varpi_i=\gcd(c_0,c_i)^{-1}(c_0\Lambda_i-c_i\Lambda_0) \in P$
for $i=1,2,\ldots,n$. Then $\{\cl(\varpi_i)\}_{i=1,2,\ldots,n}$
forms a basis of $P^0_\cl$. We call $\varpi_i$ a {\em level $0$
fundamental weight}.
 As shown in \cite{Kas02}, for each $i=1,\ldots,n$, there exists a $\uqpg$-module automorphism
$z_i \col W(\varpi_i) \rightarrow W(\varpi_i)$ which sends
$u_{\varpi_i}$ to $u_{\varpi_i + \mathsf{d_i} \delta}$, where
$\mathsf{d_i} \in \Z_{>0}$ denotes the generator of the free abelian
group $\set{ m \in \Z}{\varpi_i + m \delta \in W \varpi_i }$.

We define the $\uqpg$-module $V(\varpi_i)$ by
$$V(\varpi_i) = W(\varpi_i) / (z_i-1) W(\varpi_i).$$
We call $V(\varpi_i)$ the {\em fundamental representation of $\uqpg$
of weight $\varpi_i$}. We have $V(\varpi_i)_\aff \simeq \cor[z_i^{{1
/ \mathsf{d_i}}}] \otimes_{\cor[z_i]} W(\varpi_i)$.

\bigskip
 If a $\uqpg$-module $M \in \CC_\g$ has a {\it  bar
involution}, a crystal basis with {\it simple crystal graph} and a
{\it lower global basis}, then we say that $M$ is a {\em good
module}.
 For the precise definition, see \cite[\S\;8]{Kas02}.
 For example, the fundamental representation
$V(\varpi_i)$ is a good $\uqpg$-module. Every good module is a
simple $\uqpg$-module.

\subsection{Generalized quantum affine Schur-Weyl duality functors} \label{sec:SWfunctor}
In this subsection, we recall the  construction of the generalized
quantum affine Schur-Weyl duality functor (\cite{KKK13A}).

Let $\uqpg$ be a quantum affine algebra over $\cor$ and let
$\{V_s\}_{s\in \mathcal{S}}$ be a family of good $\uqpg$-modules.
For each $s \in \Ss$, let $\lambda_s$ be a dominant extremal weight
of $V_s$ and let $v_s$ be a dominant extremal weight vector in $V_s$
of weight $\lambda_s$.

Assume that we have an index set $J$ and two maps
 $X \colon J \rightarrow \cor^\times$,
$S \colon J \rightarrow \Ss$.

For each $i$ and $j$ in $J$, we have a $\uqpg$-module homomorphism
\eqn \Rnorm_{V_{S(i)}, V_{S(j)}}(z_i,z_j) : (V_{S(i)})_\aff \tens
(V_{S(j)})_\aff \to \cor(z_i,z_j) \tens_{\cor[z^{\pm 1}_i,z^{\pm
1}_j]}(V_{S(j)})_\aff \tens (V_{S(i)})_\aff \eneqn which sends
$v_{S(i)}\tens v_{S(j)}$ to $v_{S(j)} \tens v_{S(i)}$. Here,
$z_i\seteq z_{V_{S(i)}}$ denotes the $\uqpg$-module automorphism on
$(V_{S(i)})_\aff$ of weight $\delta$. We denote by
$d_{V_{S(i)},V_{S(j)}}(z_j / z_i)$  the denominator of
$\Rnorm_{V_{S(i)}, V_{S(j)}}(z_i,z_j)$, which is the monic
polynomial in $z_j/z_i$ of the smallest degree such that \eqn
d_{V_{S(i)},V_{S(j)}}(z_j / z_i)\Rnorm_{V_{S(i)}, V_{S(j)}}(z_i,z_j)
\bl (V_{S(i)})_\aff \tens (V_{S(j)})_\aff \br \subset
(V_{S(j)})_\aff \tens (V_{S(i)})_\aff. \eneqn

We define a quiver $\Gamma^J$ associated with the datum $(J, X, S)$
 as follows:
\eq&&
\parbox{70ex}{\be[{(1)}]
\item we take $J$ as the set of vertices,
\item  we put $d_{ij}$ many arrows from $i$ to $j$,
where $d_{ij}$ denotes the order of the zero of
$d_{V_{S(i)},V_{S(j)}}(z_j / z_i)$ at $z_j / z_i = {X(j) / X(i)}$.
\ee}\label{gammaJ} \eneq

We define a symmetric Cartan matrix $A^J =(a^J_{ij})_{i,j\in J}$  by
\eq \label{eq:Cartan matrix} a^J_{ij}=\begin{cases}
2&\text{if $i=j$,}\\
-d_{ij}-d_{ji}&\text{if $i\not=j$.}\end{cases} \eneq

Set
\begin{equation}
P_{ij}(u,v) =(u-v)^{d_{ij}} c_{ij}(u,v),\label{def:Pij}
\end{equation}
where  $\{c_{ij}(u, v)\}_{i,j\in J}$  is a family of functions in
$\cor[[u,v]]$ satisfying \eq \label{cond:cij} c_{ii}(u,v)=1, \quad
c_{ij}(u,v) c_{ji}(v,u) =1. \eneq

Let $\set{\alpha_i}{i \in J}$ be the  set of simple roots
corresponding to the Cartan matrix $A^J$ and $\rootl^+_J=\sum_{i \in
J} \Z_{\ge 0}  \alpha_i$ be the corresponding positive root lattice.

Let $R^{J}( \beta )$  $(\beta \in \rootl^+_J)$ be the \KLR\
associated with the Cartan matrix $A^J$ and the  parameter
\begin{equation}\label{eq:Q_omega}
Q_{ij}(u,v) = \delta(i\not=j)P_{ij}(u,v)P_{ji}(v,u)=\delta(i\not=j)
(u-v)^{d_{ij}}(v-u)^{d_{ji}} \quad (i,j \in J).
\end{equation}

For each $\nu =(\nu_1,\ldots, \nu_n) \in J^{ \beta }$, let
\begin{equation*}
\Oh_{\mathbb{T}^n, X(\nu)} = \cor [[X_1 - X(\nu_1), \ldots,
X_n-X(\nu_n)]]
\end{equation*}
 be the completion of
the local ring $\sho_{\mathbb{T}^n, X(\nu)}$
 of $\mathbb{T}^n$ at  $X(\nu)\seteq(X(\nu_1),\ldots,X(\nu_n))$.
 Set
$$
V_\nu =( V_{S(\nu_1)})_\aff \otimes \cdots \otimes
(V_{S(\nu_n)})_\aff.$$ Then $V_{\nu}$ is a
$\bl\cor[X_1^{\pm1},\ldots, X_n^{\pm1}]\otimes\uqpg \br$-module,
where $X_k=z_{V_{S(\nu_k)}}$.
 We   define
\begin{align}
\hV_\nu \seteq \Oh_{\mathbb{T}^n, X(\nu)}\otimes
_{\cor[X_1^{\pm1},\ldots,X_n^{\pm1}]} V_\nu, \qquad \bV[ \beta ]
\seteq \soplus\nolimits_{\nu \in J^{ \beta }} \hV_\nu e(\nu).
\label{eq:Vhat}
\end{align}

The following theorem is one of the main result of \cite{KKK13A}.
\begin{theorem}
The space $\bV[\beta]$ is a $(\uqpg, R^J(\beta))$-bimodule.
\end{theorem}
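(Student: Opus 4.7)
The plan is to construct commuting left $\uqpg$ and right $R^J(\beta)$ actions on $\bV[\beta]$ and then verify all the defining relations of $R^J(\beta)$ on the generators. The left $\uqpg$-action is the easy part: each $V_\nu$ is already a $\uqpg$-module via the iterated coproduct, and since each automorphism $X_k=z_{V_{S(\nu_k)}}$ is a $\uqpg$-endomorphism, the action extends along the flat localization/completion $\cor[X_1^{\pm1},\ldots,X_n^{\pm1}]\to\Oh_{\mathbb{T}^n,X(\nu)}$. Taking the direct sum over $\nu\in J^\beta$ equips $\bV[\beta]$ with a $\uqpg$-module structure.

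Next I would define the right $R^J(\beta)$-action on generators. The idempotents $e(\nu)$ act as projectors onto the summand $\hV_\nu e(\nu)$; the polynomial generators act by $x_k\cdot(u\,e(\nu))=(X_k-X(\nu_k))u\,e(\nu)$, which is well-defined since $X_k-X(\nu_k)$ lies in the maximal ideal of $\Oh_{\mathbb{T}^n,X(\nu)}$ and is homogeneous of the correct degree. The nontrivial generator is $\tau_k$. For $\nu\in J^\beta$ with $\nu_k=i$, $\nu_{k+1}=j$, write $s_k\nu$ for the place-permuted index. The normalized $R$-matrix $\Rnorm_{V_{S(i)},V_{S(j)}}(X_k,X_{k+1})$ has denominator $d_{V_{S(i)},V_{S(j)}}(X_{k+1}/X_k)$; by definition of $d_{ij}$ in \eqref{gammaJ} and of $P_{ij}$ in \eqref{def:Pij}, the quotient
\[
\tilde R_{k}(\nu)\seteq c_{ij}(X_k-X(i),X_{k+1}-X(j))^{-1}(X_{k+1}-X_k)^{-d_{ij}}d_{V_{S(i)},V_{S(j)}}(X_{k+1}/X_k)\,\Rnorm_{V_{S(i)},V_{S(j)}}(X_k,X_{k+1})
\]
becomes a well-defined morphism $\hV_\nu\to\hV_{s_k\nu}$ (after permuting tensor factors) once we work over $\Oh_{\mathbb{T}^n,X(\nu)}$. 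I then set $\tau_k\cdot(u\,e(\nu))=\tilde R_k(\nu)(u)\,e(s_k\nu)$ when $i\ne j$, and take the standard ``divided difference'' version of the R-matrix when $i=j$, i.e.\ subtract the identity and divide by $X_{k+1}-X_k$, so that the image indeed lies in the completion.

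Next I would check the relations of $R^J(\beta)$. The commutations $x_kx_m=x_mx_k$, $x_ke(\nu)=e(\nu)x_k$, the orthogonality of $e(\nu)$, and the distant commutation $\tau_k\tau_m=\tau_m\tau_k$ for $|k-m|>1$ are automatic from the local definitions. The intertwining relation $(\tau_kx_m-x_{s_k(m)}\tau_k)e(\nu)$ is the content of the polynomial nature of $\tilde R_k$ combined with the identity on the diagonal piece $i=j$; this is where the case split in the defining relation of $R^J(\beta)$ matches our case split in the definition of $\tau_k$. The main obstacle is the quadratic relation $\tau_k^2e(\nu)=Q_{\nu_k,\nu_{k+1}}(x_k,x_{k+1})e(\nu)$ and the braid relation for $\tau_k$. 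Both reduce to an identity on $R$-matrices: the unitarity $\Rnorm_{V,W}(X_k,X_{k+1})\circ\Rnorm_{W,V}(X_{k+1},X_k)=\mathrm{id}$, combined with $\Rnorm_{V,W}=d_{V,W}^{-1}\cdot(\text{regular map})$, gives after the rescaling exactly the product $P_{ij}(X_k-X(i),X_{k+1}-X(j))P_{ji}(X_{k+1}-X(j),X_k-X(i))$, which is $Q_{ij}(X_k,X_{k+1})$ written in local coordinates — here the condition \eqref{cond:cij} on the $c_{ij}$ is used to cancel unwanted factors. The Yang--Baxter equation for normalized $R$-matrices, together with the same rescaling, yields the ternary braid-type relation. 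Finally, commutativity of the two actions is immediate because the $R$-matrices and the $X_k$ are $\uqpg$-linear, so each generator of $R^J(\beta)$ acts by a $\uqpg$-linear map.

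The main technical obstacle is the quadratic relation: one must verify that the precise normalization by $d_{ij}$ and by the gauge $c_{ij}$ produces the polynomial $Q_{ij}$ of \eqref{eq:Q_omega} exactly, and that the passage from $X_k$ to the local coordinate $x_k=X_k-X(\nu_k)$ absorbs no spurious unit. Once this bookkeeping is in place, the remaining relations follow from standard $R$-matrix identities, and the theorem is proved.
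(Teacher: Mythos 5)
Your overall strategy --- let $e(\nu)$ act as the projector onto $\hV_\nu$, let $x_k$ act by a local coordinate centered at $X(\nu_k)$, let $\tau_k$ act by a suitably renormalized $R$-matrix (divided difference when $\nu_k=\nu_{k+1}$), and deduce the quiver Hecke relations from unitarity and the Yang--Baxter equation --- is exactly the strategy of \cite{KKK13A}, which is all the present paper offers for this theorem (it is quoted without proof). The $\uqpg$-linearity argument and the commutation of the two actions are fine.

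However, the explicit normalizations you write down fail precisely at the step you single out as the crux. First, the local coordinate must be multiplicative, $x_k=X_k/X(\nu_k)-1$, not additive $X_k-X(\nu_k)$; this is forced by Proposition \ref{prop:image of tau}\,(i), where $z_{V_{S(i)}}\mapsto X(i)(1+z)$. The point is that with the multiplicative coordinate the divisor $\{x_k=x_{k+1}\}$ coincides with the pole locus $\{X_{k+1}/X_k=X(\nu_{k+1})/X(\nu_k)\}$ of $\Rnorm_{V_{S(\nu_k)},V_{S(\nu_{k+1})}}$, which is what makes $\tau_k^2e(\nu)$ come out as $(x_k-x_{k+1})^{d_{ij}}(x_{k+1}-x_k)^{d_{ji}}e(\nu)=Q_{ij}(x_k,x_{k+1})e(\nu)$; with your additive coordinate these are different hypersurfaces whenever $X(\nu_k)\neq X(\nu_{k+1})$, so the quadratic relation cannot hold. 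Second, $\tau_k$ must act by $P_{ij}(x_k,x_{k+1})\Rnorm=(x_k-x_{k+1})^{d_{ij}}c_{ij}(x_k,x_{k+1})\Rnorm$ (this is exactly what Proposition \ref{prop:image of tau}\,(ii) records), not by $c_{ij}^{-1}(X_{k+1}-X_k)^{-d_{ij}}\,d_{V_{S(i)},V_{S(j)}}(X_{k+1}/X_k)\,\Rnorm$. Your expression differs from the correct one by a \emph{non-constant} unit of $\Oh_{\mathbb{T}^2,(X(i),X(j))}$: the denominator $d_{V_{S(i)},V_{S(j)}}(z)$ generally has zeros besides the one at $z=X(j)/X(i)$ (e.g.\ for $V(\varpi_1)$ of type $A^{(2)}_{N-1}$ it equals $(z-(-q)^2)(z+q^N)$ and only the first factor vanishes at the relevant point), and $(X_{k+1}-X_k)$ is itself a unit at $(X(i),X(j))$ when $d_{ij}>0$. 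Composing $\tilde R_k(s_k\nu)\circ\tilde R_k(\nu)$ and using unitarity then yields $Q_{ij}$ only up to a non-constant invertible power series, which violates the defining relation of $R^J(\beta)$. (The $c_{ij}$ versus $c_{ij}^{-1}$ discrepancy happens to cancel in $\tau_k^2$ by \eqref{cond:cij}, but the choice matters later, e.g.\ for Theorem \ref{thm:cij}.) With the multiplicative coordinate and the renormalization by $P_{ij}(x_k,x_{k+1})$, your argument goes through as in \cite{KKK13A}.
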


For each $\beta \in \rootl^+_J$, we construct the functor
\begin{align}
\F_{ \beta } \colon \Mod(R^J(\beta)) &\rightarrow \Mod(\uqpg)
\end{align}
defined by \eq \F_{ \beta } (M) \seteq \bV[{ \beta }] \otimes_{R^J({
\beta })} M, \label{eq:the functor}\eneq where $M$ is an
$R^J(\beta)$-module.

Set \eqn \F\seteq \soplus_{\beta\in \rootl^+_J}\F_\beta
\col\soplus_{\beta\in \rootl^+_J} \Mod(R^J(\beta)) \rightarrow
\Mod(\uqpg). \eneqn

\begin{theorem}[{\cite{KKK13A}}] \label{thm:exact}
If the Cartan matrix $A^J$ associated with  $R^J$  is of finite type
$A, D$ or $E$, then the functor  $\F_\beta$ is exact  for every
$\beta \in \rootl^+_J$.
\end{theorem}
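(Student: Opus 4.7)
The plan is to establish exactness of $\F_\beta$ by showing that the bimodule $\bV[\beta]$ is flat, in fact free, as a right $R^J(\beta)$-module. Since $\F_\beta(M)=\bV[\beta]\otimes_{R^J(\beta)}M$ is a tensor product, right-flatness of $\bV[\beta]$ is equivalent to exactness, so the whole problem reduces to analyzing the right module structure. The starting point is the PBW-type decomposition $R^J(\beta)=\soplus_{\nu\in J^\beta,\,w\in\sym_n}\cor[x_1,\ldots,x_n]e(\nu)\tau_w$ together with the block decomposition $\bV[\beta]=\soplus_{\nu\in J^\beta}\hV_\nu e(\nu)$.

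First I would make the right action on each block explicit: $e(\mu)$ is the projector onto the $\mu$-block, $x_k$ acts on $\hV_\nu e(\nu)$ as multiplication by $X_k-X(\nu_k)$, and $\tau_k$ acts through a suitably renormalized R-matrix $\Rnorm_{V_{S(\nu_k)},V_{S(\nu_{k+1})}}$, divided by the polynomial factor $P_{\nu_k,\nu_{k+1}}(X_k,X_{k+1})$ from \eqref{def:Pij}. The crucial point is that $P_{ij}$ was built to cancel exactly the pole of the R-matrix of order $d_{ij}$, so that $\tau_k$ extends to an honest endomorphism of $\hV_\nu e(\nu)\oplus \hV_{s_k\nu}e(s_k\nu)$ rather than a rational one. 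Once this is set up, one verifies the KLR defining relations on $\bV[\beta]$, which was essentially already done in constructing the bimodule structure.

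Next, fix a reference word $\nu_0\in J^\beta$ and consider the natural map
\begin{equation*}
\Phi_{\nu_0}\col \hV_{\nu_0}\tens_{\cor[x_1,\ldots,x_n]} e(\nu_0)R^J(\beta)\to \bV[\beta],\qquad v\tens(e(\nu_0)\tau_w p)\longmapsto v\cdot e(\nu_0)\tau_w p.
\end{equation*}
This map is $R^J(\beta)$-right linear by construction, and I claim it is an isomorphism of right $R^J(\beta)$-modules. Surjectivity follows because one can reach every block $\hV_\nu e(\nu)$ from $\hV_{\nu_0}e(\nu_0)$ by applying the intertwiners $\varphi_w=\tau_w+(\text{lower order})$, using that the graph underlying $A^J$ is finite ADE so all parameters $d_{ij}$ lie in $\{0,1\}$: hence the intertwiner $\tau_k$ has the same leading term as the R-matrix and exchanges adjacent blocks. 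Injectivity then follows by a dimension/filtration argument on $\sym_n$: one filters $R^J(\beta)$ by the Bruhat order on $w$, and uses that on the associated graded each piece $\hV_{\nu_0}\tens\cor[x]e(\nu_0)\tau_w$ maps isomorphically onto $\hV_{w^{-1}\nu_0}e(w^{-1}\nu_0)$ tensored with the appropriate polynomial ring, because in ADE type the R-matrix poles are simple and are \emph{exactly} cancelled by the chosen $P_{ij}$.

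The main obstacle is this last comparison: showing that the associated graded of $\bV[\beta]$ with respect to the Bruhat filtration matches free generators, which rests on a careful analysis of the order and residues of $\Rnorm_{V_{S(i)},V_{S(j)}}(z_i,z_j)$ at $z_j/z_i=X(j)/X(i)$. The ADE hypothesis enters precisely because it guarantees $d_{ij}+d_{ji}\le 1$, so that the polynomial $Q_{ij}(u,v)$ appearing in the KLR quadratic relation $\tau_k^2=Q_{\nu_k,\nu_{k+1}}(x_k,x_{k+1})$ is at most linear in $u-v$, and this is exactly the order of pole that the normalized R-matrix can produce. Outside finite ADE, higher order poles could appear and the naive free generating system would fail. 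Once flatness of $\bV[\beta]$ over $R^J(\beta)^{\opp}$ is established via $\Phi_{\nu_0}$, exactness of $\F_\beta$ follows immediately.
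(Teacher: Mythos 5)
This theorem is quoted from \cite{KKK13A}; the present paper gives no proof, so your proposal has to be measured against the argument there, which is homological rather than structural. Your plan hinges on the claim that $\Phi_{\nu_0}\col \hV_{\nu_0}\tens_{\cor[x_1,\ldots,x_n]}e(\nu_0)R^J(\beta)\to\bV[\beta]$ is an isomorphism, i.e.\ that $\bV[\beta]$ is free of rank one over $e(\nu_0)R^J(\beta)$. This is false whenever $\beta=\sum_i m_i\alpha_i$ has some $m_i\ge 2$, for a simple rank reason: by the PBW decomposition, $e(\nu_0)R^J(\beta)=\soplus_{w\in\sym_n}\cor[x_1,\ldots,x_n]e(\nu_0)\tau_w$, so the source of $\Phi_{\nu_0}$ is a flat $\cor[x_1,\ldots,x_n]$-module of generic rank $n!\cdot\prod_j\dim V_{S(\nu_{0,j})}$, while the target $\soplus_{\nu\in J^\beta}\hV_\nu e(\nu)$ has generic rank $(n!/\prod_i m_i!)\cdot\prod_j\dim V_{S(\nu_{0,j})}$, since $J^\beta$ is a single $\sym_n$-orbit with stabilizer of order $\prod_i m_i!$. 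Already for $\beta=2\alpha_a$ (the nil-Hecke case, which certainly occurs here) the map has a kernel. Even in the multiplicity-free case surjectivity fails: the renormalization by $P_{ij}$ removes the pole of $\Rnorm$ but introduces a zero, so that $\tau_k^2$ acts on $\hV_\nu e(\nu)$ as multiplication by $Q_{\nu_k,\nu_{k+1}}(x_k,x_{k+1})$, which is a non-unit of $\Oh_{\tv^n,X(\nu)}$ when $\nu_k,\nu_{k+1}$ are adjacent; hence the intertwiners do not carry one block onto another, and your ``associated graded'' pieces are not isomorphisms. Finally, your diagnosis of where the hypothesis enters is off: $d_{ij}+d_{ji}\le 1$ holds for any simply-laced $A^J$, including affine ones, so it cannot be the reason the theorem requires finite type $A$, $D$, $E$.

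The actual proof in \cite{KKK13A} establishes only flatness in the derived sense, not freeness. Since $\F_\beta$ is a tensor functor it is right exact, and one must show $\Tor_k^{R^J(\beta)}(\bV[\beta],M)=0$ for $k\ge1$. The two inputs are: (a) each block $\hV_\nu$ is flat over $\cor[x_1,\ldots,x_n]$ (a completion of a localization of a free module), so the modules $R^J(\beta)e(\nu)\tens_{\cor[x_1,\ldots,x_n]}\bl\cor[x]/(x_1^m,\ldots,x_n^m)\br$, which cover every finite-dimensional module, are acyclic for $\bV[\beta]\tens_{R^J(\beta)}(-)$; and (b) the quiver Hecke algebra of \emph{finite} $ADE$ type has finite global dimension (Kato, Brundan--Kleshchev--McNamara), so the higher $\Tor$'s vanish beyond a fixed degree and a descending induction on $k$ closes the argument. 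Point (b) is precisely where the finite-type hypothesis is used, and it has no analogue in your approach. If you want a module-theoretic statement, the most you should aim for is that $\bV[\beta]$ has vanishing higher $\Tor$ against all finite-dimensional modules, not that it is free.
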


For each $i \in J$, let $L(i)$ be the $1$-dimensional
$R^J(\alpha_i)$-module  generated by a nonzero vector $u(i)$ with
relation $x_1 u(i) = 0$ and $e(j) u(i)=\delta(j=i) u(i)$ for $j \in
J$. The space $L(i)_z :=  \cor[z] \otimes L(i)$ admits an
$R^J(\alpha_i)$-module structure as follows: \eqn x_1(a \otimes
u(i)) =(za) \otimes u(i), \quad e(j)(a\otimes u(i)) = \delta_{j,i}(a
\otimes u(i)). \eneqn Note that it is isomorphic to  $R^J(\alpha_i)$
as a left $R^J(\alpha_i)$-module.

By the construction in \cite{KKK13A}, we have
\begin{prop} \label{prop:image of tau} \hfill

\bnum
\item For any $i \in J$,
we have
\begin{align}
  \mathcal F(L(i)_z) \simeq \cor[[z]]\tens_{\cor[z_{V_{S(i)}}^{\pm 1}]}(V_{S(i)})_\aff,
\end{align}
where   $\cor[z_{V_{S(i)}}^{\pm 1}]\to\cor[[z]]$ is  given  by
$z_{V_{S(i)}}\mapsto X(i)(1+z)$.

\item For $i,j\in J$,
let   $$\phi=R_{L(i)_z,L(j)_{z'}}\col L(i)_z \conv L(j)_{z'}
\rightarrow L(j)_{z'} \conv L(i)_z.$$ That is, let $\phi$ be  the
  $R^J(\alpha_i+\alpha_j)$-module homomorphism given by
\begin{align}
  \phi\bl u(i)_z\otimes u(j)_{z'}\br = \vphi_1\bl u(j)_{z'} \otimes u(i)_z\br,
\end{align}
where $\vphi_1$ is the intertwiner in \eqref{def:int}. Then we have
\eqn &&\mathcal F(\phi)
=(X_i/X(i)-X_j/X(j))^{d_{i,j}}c_{i,j}(X_i/X(i)-1,X_j/X(j)-1)
\Rnorm_{V_{S(i)}, V_{S(j)}}\eneqn as a $U_q'(\g)$-module
homomorphism \eqn &&\Oh_{\mathbb T^2,( X(i),X(j))
}\tens_{\cor[X_i^{\pm 1},X_j^{\pm 1}]}
\bl(V_{S(i)})_\aff \otimes (V_{S(j)})_\aff\br \\
&&\hs{10ex}\longrightarrow\Oh_{\mathbb T^2,( X(j),X(i)) }
\tens_{\cor[X_j^{\pm 1},X_i^{\pm 1}]} \bl(V_{S(j)})_\aff \otimes
(V_{S(i)})_\aff\br, \eneqn where ${X_i}=z_{V(S(i))}$ and
${X_j}=z_{V(S(j))}$. \ee
\end{prop}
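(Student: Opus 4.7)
The plan is to prove both parts by direct computation, using the construction of $\F$ together with the explicit description of the $(\uqpg,R^J(\beta))$-bimodule structure on $\bV[\beta]$ established in \cite{KKK13A}. For part (i), I unwind $\F(L(i)_z)=\bV[\alpha_i]\tens_{R^J(\alpha_i)}L(i)_z$. Since $R^J(\alpha_i)\simeq\cor[x_1]$ and $L(i)_z=\cor[z]$ (with $x_1$ acting as $z$), and since $\bV[\alpha_i]=\Oh_{\mathbb{T},X(i)}\tens_{\cor[X_1^{\pm1}]}(V_{S(i)})_\aff$ with the right action of $x_1$ realized as multiplication by $X_1/X(i)-1$ (where $X_1=z_{V_{S(i)}}$), the tensor product imposes $X_1=X(i)(1+z)$ and converts $\Oh_{\mathbb{T},X(i)}=\cor[[X_1-X(i)]]$ into $\cor[[z]]$, producing the stated isomorphism.

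For part (ii), I first treat $i\neq j$, in which case \eqref{def:int} gives $\vphi_1 e(j,i)=\tau_1 e(j,i)$, so the task reduces to computing $\F(\tau_1)$ on the $e(i,j)$-component of $\F(L(i)_z\conv L(j)_{z'})$. Pushing $\tau_1$ across the tensor expresses this as the right action of $\tau_1$ on $\bV[\alpha_i+\alpha_j]$ sending $\hV_{(i,j)}e(i,j)$ into $\hV_{(j,i)}e(j,i)$. From the construction of \cite{KKK13A}, together with the choice of $Q_{ij}$ in \eqref{eq:Q_omega}, this action is given by
$$v\longmapsto P_{ij}(x_1,x_2)\,\Rnorm_{V_{S(i)},V_{S(j)}}(v)=(x_1-x_2)^{d_{ij}}c_{ij}(x_1,x_2)\,\Rnorm_{V_{S(i)},V_{S(j)}}(v),$$
the factor $(x_1-x_2)^{d_{ij}}$ being precisely what is needed to cancel the pole of $\Rnorm_{V_{S(i)},V_{S(j)}}$ at $z_j/z_i=X(j)/X(i)$ so that the image lies in the target completion. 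Substituting $x_1\leftrightarrow X_i/X(i)-1$ and $x_2\leftrightarrow X_j/X(j)-1$ from part (i) yields the claimed formula. The case $i=j$ uses $\vphi_1 e(i,i)=(\tau_1 x_1-x_1\tau_1)e(i,i)$ together with $d_{ii}=0$ and $P_{ii}(u,v)=1$, reducing to a short commutator computation that gives the same expression.

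The main obstacle will be verifying that the right $\tau_1$-action on $\bV$ has the precise form $P_{ij}(x_1,x_2)\,\Rnorm_{V_{S(i)},V_{S(j)}}$ with no extra normalization, and that the exponent $d_{ij}$ matches the order of vanishing of $d_{V_{S(i)},V_{S(j)}}(z_j/z_i)$ at the point $X(j)/X(i)$ --- this being the very definition \eqref{gammaJ} of the arrows of $\Gamma^J$. Both facts are already contained in \cite{KKK13A}, but the careful bookkeeping among the variables $x_k$ on the quiver Hecke side, the parameters $z,z'$ of $L(i)_z$ and $L(j)_{z'}$, the affinization variables $X_k=z_{V_{S(\nu_k)}}$, and the evaluation points $X(\nu_k)$ is the principal source of potential error.
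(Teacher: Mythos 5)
Your proposal is correct and follows essentially the same route as the paper, which gives no independent argument but simply invokes the construction of the $(\uqpg,R^J(\beta))$-bimodule $\bV$ in \cite{KKK13A}: part (i) is exactly the base change $\bV[\alpha_i]\tens_{\cor[x_1]}\cor[x_1]$ with $x_1$ acting as $X_1/X(i)-1$, and part (ii) is the defining formula for the right $\tau_1$-action (namely $P_{ij}(x_1,x_2)\Rnorm$ off the diagonal, and its regularized version on the diagonal) evaluated at the shifted coordinates. Your flagged bookkeeping concerns are the right ones to worry about, and your substitutions $x_k\leftrightarrow X_k/X(\nu_k)-1$ reproduce the stated formula.
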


  Recall that $\CC_\g$ denotes the category of
finite-dimensional integrable $\uqpg$-modules.

\begin{theorem} \label{thm:conv to tensor}
The functor $\F$ induces a tensor functor
$$\F\col\soplus_{\beta \in \rootl^+_J} R^J( \beta) \gmod
\to\CC_\g.$$ Namely, $\F$ sends finite-dimensional graded
$R^J(\beta)$-modules to $\uqpg$-modules in $\CC_\g$, and  there
exist canonical $\uqpg$-module isomorphisms \eqn \F(R^J(0))\simeq
\cor,  &  \F(M_1 \conv M_2) \simeq \F(M_1) \otimes \F(M_2) \eneqn
for $M_1 \in R^J( \beta_1 ) \gmod$ and $M_2 \in R^J( \beta_2 )\gmod$
 such that the diagrams in  \cite[A.1.2]{KKK13A} is commutative .
\end{theorem}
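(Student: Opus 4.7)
The plan is to establish three things in turn: that $\F$ sends finite-dimensional graded $R^J(\beta)$-modules into $\CC_\g$, the unit isomorphism $\F(R^J(0))\simeq\cor$, and the tensor isomorphism $\F(M_1\conv M_2)\simeq\F(M_1)\tens\F(M_2)$ together with the coherence diagrams of \cite[A.1.2]{KKK13A}.

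The unit case is immediate: $R^J(0)=\cor$ and the unique element of $J^0$ is the empty tuple, so $\hV[0]=\cor$ and hence $\F(R^J(0))\simeq\cor$. For finite-dimensionality and integrability, let $M$ be a finite-dimensional graded $R^J(\beta)$-module with $|\beta|=n$. Since $\deg x_k>0$ and $M$ is supported in finitely many degrees, each $x_k$ acts nilpotently on $M$. From \eqref{eq:Vhat}, the right action of $x_k$ on $\hV_\nu e(\nu)$ lifts multiplication by $X_k-X(\nu_k)$ in the completed local ring $\Oh_{\mathbb{T}^n,X(\nu)}$. Hence $\hV_\nu e(\nu)\tens_{R^J(\beta)}e(\nu)M$ factors through a quotient of $\hV_\nu$ by a sufficiently high power of the maximal ideal at $X(\nu)$, yielding a finite-dimensional $\uqpg$-module which is a quotient of a suitable jet of $(V_{S(\nu_1)})_\aff\tens\cdots\tens (V_{S(\nu_n)})_\aff\tens_\cor e(\nu)M$. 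Since each $V_{S(\nu_k)}$ is good, hence in $\CC_\g$, so is every such quotient.

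The monoidal isomorphism rests on the bimodule identification
$$\hV[\beta_1+\beta_2]\,e(\beta_1,\beta_2)\;\simeq\;\hV[\beta_1]\tens\hV[\beta_2]$$
as $\bigl(\uqpg,\,R^J(\beta_1)\tens R^J(\beta_2)\bigr)$-bimodules, where the $\uqpg$-action on the right-hand side uses the coproduct of $\uqpg$. Three ingredients enter: the canonical identification $V_{\nu_1*\nu_2}=V_{\nu_1}\tens V_{\nu_2}$ of $\uqpg$-modules via $\Delta$, the factorization $\Oh_{\mathbb{T}^{n_1+n_2},X(\nu_1*\nu_2)}\simeq\Oh_{\mathbb{T}^{n_1},X(\nu_1)}\mathbin{\widehat{\otimes}}\Oh_{\mathbb{T}^{n_2},X(\nu_2)}$ of completed local rings, and the embedding \eqref{eq:embedding} under which $R^J(\beta_1)\tens R^J(\beta_2)$ acts on the left-hand side as the tensor product of the two individual right actions. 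Tensoring with $M_1\tens M_2$ over $R^J(\beta_1)\tens R^J(\beta_2)$ and unfolding the definition of $\conv$ then gives $\F(M_1\conv M_2)\simeq\F(M_1)\tens\F(M_2)$.

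The main obstacle is verifying compatibility of this bimodule isomorphism with the action of the cross-generator $\tau_m$ ($m=|\beta_1|$), which belongs to $R^J(\beta_1+\beta_2)$ but not to the image of the embedding \eqref{eq:embedding}. This is where Proposition \ref{prop:image of tau}\,(ii) is essential: it identifies the image of the intertwiner $\vphi_1$ under $\F$ with the normalized $R$-matrix (up to the explicit $c_{i,j}$ factor) between the relevant fundamental modules, and the normalized $R$-matrix is by construction a $\uqpg$-linear map with respect to $\Delta$. Consequently, the $\tau_m$-action on $\hV[\beta_1+\beta_2]\,e(\beta_1,\beta_2)$ matches the braiding on $\hV[\beta_1]\tens\hV[\beta_2]$ coming from the tensor structure on $\CC_\g$. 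The coherence diagrams of \cite[A.1.2]{KKK13A} (associativity for three factors and the unit triangles) then reduce to associativity of the completed tensor product of local rings and of the tensor product on $\CC_\g$, together with the elementary identity $e(\beta_1+\beta_2,\beta_3)\,e(\beta_1,\beta_2,\beta_3)=e(\beta_1,\beta_2+\beta_3)\,e(\beta_1,\beta_2,\beta_3)=e(\beta_1,\beta_2,\beta_3)$ and its compatibility with \eqref{eq:embedding}.
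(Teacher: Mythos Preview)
The paper does not give a proof of this theorem: it is stated in the background section as a result recalled from \cite{KKK13A}, so there is no in-paper proof to compare against. Your outline is essentially the argument of \cite{KKK13A}: the unit and finite-dimensionality checks are as you say, and the monoidal isomorphism comes from the bimodule identification $\hV[\beta_1+\beta_2]\,e(\beta_1,\beta_2)\simeq\hV[\beta_1]\mathbin{\widehat\otimes}\hV[\beta_2]$ together with the observation that completing is irrelevant once one tensors with a module on which the $x_k$ act nilpotently.

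One point deserves correction. Your paragraph about the ``cross-generator'' $\tau_m$ is a red herring. The isomorphism $\hV[\beta_1+\beta_2]\,e(\beta_1,\beta_2)\simeq\hV[\beta_1]\mathbin{\widehat\otimes}\hV[\beta_2]$ is only needed as an isomorphism of $\bigl(\uqpg,\,R^J(\beta_1)\otimes R^J(\beta_2)\bigr)$-bimodules, because one then tensors over $R^J(\beta_1)\otimes R^J(\beta_2)$ with $M_1\otimes M_2$; the element $\tau_m$ does not lie in (the image of) this subalgebra and plays no role. Likewise, the coherence diagrams of \cite[A.1.2]{KKK13A} are the associativity pentagon and the unit triangles for a \emph{monoidal} functor; no braiding enters, so invoking the $R$-matrix via Proposition~\ref{prop:image of tau}(ii) is not needed here. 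That proposition is used in \cite{KKK13A}, but for computing images of intertwiners, not for establishing that $\F$ is a tensor functor.
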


\section{Comparison of denominators in untwisted cases and twisted cases} \label{sec:comparison}
\subsection{Denominators of normalized $R$-matrices for $U_q'(A^{(1)}_{N-1})$ and $U_q'(A^{(2)}_{N-1})$}
In this subsection, we recall the denominators of normalized
$R$-matrices for quantum affine algebras of type A. In Table
\ref{table:Dynkin}, we list the Dynkin diagrams with an enumeration
of vertices by simple roots and list the corresponding fundamental
weights.

\begin{table}[ht]
\renewcommand{\arraystretch}{2.2}
\centering
\begin{tabular}[c]{c|c|p{5cm}}
Type& Dynkin diagram & ${}^{\quad}$  Fundamental weights\\
\hline \hline
 $A_{n}^{(1)}$
($n \ge 2$) &
$$
\xymatrix@R=3ex{ & &&*{\circ}<3pt> \ar@{-}[drrr]^<{\alpha_0} \ar@{-}[dlll]\\
*{\circ}<3pt> \ar@{-}[r]_<{\alpha_1}  &*{\circ}<3pt>
\ar@{-}[r]_<{\alpha_2} & *{\circ}<3pt> \ar@{-}[r]_<{\alpha_3} & {}
\ar@{.}[r]  & *{\circ}<3pt> \ar@{-}[r]_>{\,\,\,\ \alpha_{n-1}}
&*{\circ}<3pt>\ar@{-}[r]_>{\,\,\,\,\alpha_n} &*{\circ}<3pt> }
$$ & $\varpi_i = \Lambda_i -\Lambda_0$ $(1 \le i \le n)$\\
\hline $A_{2}^{(2)}$ & \hskip-3em $$ \put(0,0){\circle{4}}
\put(40,0){\circle{4}} \put(3.7,-3.6){\line(1,0){23}}
\put(5,-1.15){\line(1,0){26}} \put(5,1.15){\line(1,0){26}}
\put(3.7,3.6){\line(1,0){23}} \put(20.5,-5.2){{\LARGE\mbox{$>$}}}
\put(-3,-10){\tiny $\alpha_0$} \put(37,-10){\tiny $\alpha_1$}
$$
& $\varpi_1=2\Lambda_1- \Lambda_0$
 \\
\hline $A_{3}^{(2)}$ &
$$\xymatrix@R=3ex{*{\circ}<3pt>  \ar@{<=}[r]_<{\alpha_0} & *{\circ}<3pt> \ar@{=>}[r]_<{\alpha_2} \ar@{}[r]_>{\,\,\,\ \alpha_1}  & *{\circ}<3pt>}$$
 & $\varpi_1=\Lambda_1- \Lambda_0$,  \newline   $\varpi_2=\Lambda_2-2\Lambda_0$ \\
\hline $A_{2n-1}^{(2)}$ ($n \ge 3$) &
$$
\xymatrix@R=3ex{ & *{\circ}<3pt> \ar@{-}[d]^<{\alpha_0}\\
*{\circ}<3pt> \ar@{-}[r]_<{\alpha_1}  &*{\circ}<3pt>
\ar@{-}[r]_<{\alpha_2}  &
  *{\circ}<3pt> \ar@{-}[r]_<{\alpha_3} & {}
\ar@{.}[r] & *{\circ}<3pt> \ar@{-}[r]_>{\,\,\,\ \alpha_{n-1}}
&*{\circ}<3pt>\ar@{<=}[r]_>{\,\,\,\,\alpha_{n}} &*{\circ}<3pt> }
$$ & $\varpi_i=\Lambda_i-\Lambda_0$ \newline $(i=1,n)$,  \newline $\varpi_i=\Lambda_i- 2\Lambda_0$ \newline $(2\le i \le n-1)$  \\
\hline $A_{2n}^{(2)}$ ($n \ge 2$) & $$ \xymatrix@R=3ex{
*{\circ}<3pt> \ar@{=>}[r]_<{\alpha_0}  &*{\circ}<3pt>
\ar@{-}[r]_<{\alpha_1}   & *{\circ}<3pt> \ar@{-}[r]_<{\alpha_2} & {}
\ar@{.}[r] & *{\circ}<3pt> \ar@{-}[r]_>{\,\,\,\ \alpha_{n-1}}
&*{\circ}<3pt>\ar@{=>}[r]_>{\,\,\,\,\alpha_n} &*{\circ}<3pt> }
$$ & $\varpi_i = \Lambda_i -\Lambda _0$
\newline $(i=1,\ldots,n-1)$, \newline $\varpi_n =2\Lambda_n - \Lambda_0 \ $ \\ \hline
\end{tabular}

\caption{Dynkin diagrams and fundamental weights}
\label{table:Dynkin}
\end{table}


By \cite{DO94}, for $\g=A^{(1)}_{N-1}$ ($N \ge 1$), $1\leq k,l \leq
N-1$,  we have \eq\hs{10ex} \label{eq:denom A^(1)_n} d_{V(\varpi_k),
V(\varpi_{\ell})}(z)=\hs{-4ex} \prod\limits_{s=1}^{\min(k, \ell,
N-k, N-\ell)}\hs{-2ex} \bl z-(-q)^{|k-\ell|+2s}\br. \label{eq:zerR}
\eneq

We recall the denominators of normalized $R$-matrices between
fundamental representations of type $A^{(2)}_{N-1}$, given in
\cite{Oh14}.
  \begin{theorem}
For $\g=A^{(2)}_{N-1}$ $(N \ge 3)$, $1\leq k,l \leq \lfloor N/2
\rfloor$, we have
\begin{align} \label{eq:denom A^(2)_N-1}
  d_{k,l}(z) = \prod_{s=1}^{\min(k,l)} (z-(-q)^{|k-l|+2s})(z+q^N(-q)^{-k-l+2s}).
\end{align}
\end{theorem}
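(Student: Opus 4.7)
The plan is to reduce the general formula to the base case $d_{1,1}(z)$ via a fusion realization of the higher fundamental modules, and then to extract $d_{k,l}(z)$ by an iterated application of R-matrix factorization. The two families of zeros in the claim reflect distinct origins: the factors $(z-(-q)^{|k-l|+2s})$ parallel the ``untwisted'' zeros of the $A^{(1)}_{N-1}$ pattern in~\eqref{eq:denom A^(1)_n}, while the factors $(z+q^N(-q)^{-k-l+2s})$ are the new zeros produced by the order-two Dynkin diagram automorphism whose folding yields $A^{(2)}_{N-1}$ from $A^{(1)}_{N-1}$.

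First I would establish the base case $k=l=1$ by direct inspection of the vector representation $V(\varpi_1)$, which has dimension $N$. The tensor square $V(\varpi_1)\otimes V(\varpi_1)_z$ decomposes generically into three irreducible summands controlled by the quantized analogue of the bilinear form defining the twisted structure. Computing the eigenvalues of $\Rnorm_{V(\varpi_1),V(\varpi_1)}(z)$ on each summand and taking the least common denominator gives $d_{1,1}(z) = (z-q^2)(z+q^N)$, matching the claim at $k=l=s=1$.

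Next I would realize each fundamental module $V(\varpi_k)$ as the head of the iterated product $V(\varpi_1)_{(-q)^{k-1}}\otimes V(\varpi_1)_{(-q)^{k-3}}\otimes\cdots\otimes V(\varpi_1)_{(-q)^{-(k-1)}}$, a twisted analogue of the Jimbo--Kirillov--Reshetikhin fusion construction. Using the functoriality of normalized R-matrices under subquotients, together with the factorization of $\Rnorm_{V(\varpi_k),V(\varpi_l)}(z)$ through the corresponding product of $\Rnorm_{V(\varpi_1),V(\varpi_1)}$-factors, I obtain an upper bound in which $d_{k,l}(z)$ divides a product of shifted copies of $d_{1,1}$. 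Organizing these shifts on a two-dimensional grid indexed by $(i,j)\in\{1,\ldots,k\}\times\{1,\ldots,l\}$ and performing the cancellations coming from the fusion idempotents yields precisely the two product series in \eqref{eq:denom A^(2)_N-1}. To show sharpness, at each listed zero I would exhibit an explicit non-trivial kernel element of $\Rnorm_{V(\varpi_k),V(\varpi_l)}$, built from a singular vector in $V(\varpi_k)\otimes V(\varpi_l)_x$ whose existence at that spectral parameter can be traced back through the fusion to the analogous singular behavior in the base case.

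The main obstacle is the combinatorial bookkeeping of cancellations in the iterated product. Different pairs $(i,j)$ on the fusion grid often produce coincident shifts, and the fusion procedure introduces auxiliary scalar factors that must be carefully accounted for so as not to pollute the final denominator. A related subtlety is that the second family of zeros $(z+q^N(-q)^{-k-l+2s})$ depends sensitively on the parity of $N$ and on the position of $\varpi_k$ relative to the ``middle'' of the Dynkin diagram, so the low-rank exceptional cases $A^{(2)}_{2}$ and $A^{(2)}_{3}$ listed in Table~\ref{table:Dynkin}, together with boundary indices $k,l$ close to $\lfloor N/2\rfloor$, will require separate verification.
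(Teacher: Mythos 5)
The paper contains no proof of this statement: the formula is quoted from the reference [Oh14] (``We recall the denominators of normalized $R$-matrices \dots given in [Oh14]''), so there is no internal argument to compare yours against. Your outline does follow the strategy that [Oh14] and its antecedents (Date--Okado, Akasaka--Kashiwara) actually employ --- compute $d_{1,1}$ from the tensor square of the vector representation, realize $V(\varpi_k)$ by fusion from shifted copies of $V(\varpi_1)$ (this is consistent with Proposition \ref{prop:Oh14} of the present paper), bound $d_{k,l}$ by a product of shifted copies of $d_{1,1}$, and then prove sharpness --- and your base case $d_{1,1}(z)=(z-q^2)(z+q^N)$ is indeed the correct specialization of \eqref{eq:denom A^(2)_N-1} at $k=l=1$.

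As written, however, this is a road map rather than a proof: the two steps carrying all the difficulty are only announced. First, the base case is asserted, not computed; the factor $(z+q^{N})$, which encodes the twisting, cannot be inferred from the $A^{(1)}_{N-1}$ pattern \eqref{eq:denom A^(1)_n} and must come out of an explicit spectral computation (in [Oh14] it is visible in the scalar $a_{11}(z)$ relating the universal and normalized $R$-matrices, cf.\ \eqref{eq:a11}). Second, fusion only yields the divisibility of $d_{k,l}(z)$ by a product of $kl$ shifted copies of $d_{1,1}$, which has far more factors than \eqref{eq:denom A^(2)_N-1}; ``performing the cancellations coming from the fusion idempotents'' \emph{is} the content of the theorem, and requires either taking the gcd of the upper bounds coming from the sub- and quotient-realizations of $V(\varpi_k)$, or an explicit reducibility argument at each surviving zero. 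For the lower bound, note that a zero of $d_{k,l}$ is a \emph{pole} of $\Rnorm_{V(\varpi_k),V(\varpi_l)}$, detected by reducibility of $V(\varpi_k)\otimes V(\varpi_l)_{z_0}$ (e.g.\ via a Dorey-type morphism as in Proposition \ref{prop:Oh14}), not by a kernel element of $\Rnorm$ itself at a regular point. Until the base-case eigenvalue computation and the case-by-case sharpness verification (including the boundary cases $k$ or $l$ near $\lfloor N/2\rfloor$ and small $N$ that you yourself flag) are supplied, the argument remains an outline with genuine gaps.
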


\begin{remark}
 Even though our enumeration of vertices of Dynkin diagram of type $A^{(2)}_{2n}$ is different from the one in \cite{Kac},
for each $i=1,\ldots,n$ the corresponding fundamental
representations $V(\varpi_i)$ are isomorphic to each other, since
the corresponding fundamental weights are conjugate to each other
under the Weyl group action (see \cite[\S 5.2]{Kas02}). For the sake
of notational simplicity, we denote the Dynkin diagram of type
$D^{(2)}_3$ in \cite{Kac} by  $A^{(2)}_3$ throughout this paper.
\end{remark}

\subsection{The quiver isomorphism}
For each quantum affine algebra $U_q(\g)$, we define a quiver
$\mathscr S(\g)$ as follows: \eq&&
\parbox{70ex}{\be[{(1)}]
\item we take  the set of equivalence classes
$\hat I_{\g} \seteq (I_0 \times \cor^\times) /  \sim$ as the set of
vertices, where the equivalence relation is given by $(i,x) \sim
(j,y)$
 if and only if $V(\varpi_i)_x \cong V(\varpi_j)_y$,

\item  we put $d$ many arrows from $(i,x)$ to $(j,y)$,
where $d$ denotes the order of the zero of
$d_{V(\varpi_i),V(\varpi_j)}(z_{V(\varpi_j)} / z_{V(\varpi_i)})$ at
$z_{V(\varpi_j)} / z_{V(\varpi_i)} = {y / x}$. \ee} \eneq Note that
$(i,x)$ and $(j,y)$ are linked by at least one arrow in $\mathscr
S(\g)$  if and only if the tensor product $V(\varpi_i)_x \tensor
V(\varpi_j)_y$ is reducible (\cite[Corollary 2.4]{AK}).

Let $\mathscr S_0(\g)$ be a connected component of $\mathscr S(\g)$.
Note that a connected component of $\mathscr S(\g)$ is unique up to
a spectral parameter and hence $\mathscr S_0(\g)$ is uniquely
determined up to a quiver isomorphism. For example, one can take \eq
&&\mathscr S_0(A^{(1)}_{n}) \seteq \{(i,(-q)^{p}) \in \{1,\ldots,n\} \times \cor^{\times} \,;\, p \equiv i+1 \hs{-1ex}\mod \ 2 \}, \label{eq:S0A^(1)_n} \\
&&\mathscr S_0(A^{(2)}_{2n-1}) \seteq \{( i,\pm(-q)^{p})
\in \{1,\ldots,n\} \times \cor^{\times} \,;\,  \label{eq:S0A^(2)_2n-1} \\
&&\hs{35ex}\text{$i  \in \{1,\ldots,n\}$, $p \equiv i+1 \hs{-1ex}\mod  2$} \}, \nonumber  \\
&&\mathscr S_0(A^{(2)}_{2n}) \seteq \{(i,(-q)^{p}) \in
\{1,\ldots,n\} \times \cor^{\times} \,;\, p \in \Z \}.
\label{eq:S0A^(2)_2n} \eneq
Remark that we have $V(\varpi_n)_x\simeq V(\varpi_n)_{-x}$ in the
$A^{(2)}_{2n-1}$-case.

Let $\CC^0_\g$ be the smallest full subcategory of $\CC_\g$ stable
under taking subquotients, extensions, tensor products and
containing $\set{V(\varpi_i)_x}{(i,x) \in \mathscr S_0(\g)}$.

Define a map \eqn \pi^{(2)}_{N-1} : \hat I_{A^{(1)}_{N-1}} \To \hat
I_{A^{(2)}_{N-1}} \eneqn by \eq \label{eq: pi2 map}
\pi^{(2)}_{N-1}(i,x)=
\begin{cases}
(i, x) & \text{if} \   1 \le i \le \lfloor N/2 \rfloor,  \\
(N-i, (-1)^{N-1} x) & \text{if} \   \lfloor N/2 \rfloor < i \le N-1.  \\
\end{cases}
\eneq When there is no afraid of confusion, then we just write
$\pi^{(2)}$ instead of $\pi^{(2)}_{N-1}$.

By \eqref{eq:denom A^(1)_n} and \eqref{eq:denom A^(2)_N-1},
 we have the following

\begin{prop} \label{prop:quiver isom}
The map $\pi^{(2)}$ induces quiver isomorphisms
$$\mathscr S({A^{(1)}_{N-1}}) \overset{\sim} \longrightarrow \mathscr S({A^{(2)}_{N-1}}) \ \
\text{and} \ \ \mathscr S_0({A^{(1)}_{N-1}}) \overset{\sim}
\longrightarrow \mathscr S_0({A^{(2)}_{N-1}}).$$
\end{prop}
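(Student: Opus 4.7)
The proof will be a direct verification based on the denominator formulas \eqref{eq:denom A^(1)_n} and \eqref{eq:denom A^(2)_N-1}. The plan splits into two parts: first show that $\pi^{(2)}$ induces a bijection on vertex sets, then check that arrow multiplicities agree at every ordered pair of vertices.

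For the vertex bijection, I will argue that the only non-trivial equivalence in either of $\hat I_{A^{(1)}_{N-1}}$, $\hat I_{A^{(2)}_{N-1}}$ is $V(\varpi_n)_x\simeq V(\varpi_n)_{-x}$ in the $A^{(2)}_{2n-1}$-case, and that $\pi^{(2)}$ fixes the middle index $n=\lfloor N/2\rfloor$ without involving the $(-1)^{N-1}$ factor, so the induced map on equivalence classes is well-defined. Bijectivity on $\mathscr S_0$ follows by direct parity inspection of \eqref{eq:S0A^(1)_n}--\eqref{eq:S0A^(2)_2n}: for $k<\lfloor N/2\rfloor$, each $(k,\pm(-q)^p)\in\mathscr S_0(A^{(2)}_{N-1})$ has a unique preimage in $\mathscr S_0(A^{(1)}_{N-1})$, namely $(k,(-q)^p)$ from the first half or $(N-k,(-q)^p)$ from the second half, with the sign governed by $(-1)^{N-1}$; and for $k=n=\lfloor N/2\rfloor$ in the $A^{(2)}_{2n-1}$-case, the identification in the target absorbs what would otherwise be a double covering. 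The $\mathscr S$-bijection then follows connected component by component, since each component of $\mathscr S$ is a spectral-parameter translate of $\mathscr S_0$.

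For arrow preservation, given $(i,x),(j,y)\in\hat I_{A^{(1)}_{N-1}}$, the symmetry $d^{A^{(1)}}_{V(\varpi_i),V(\varpi_j)}=d^{A^{(1)}}_{V(\varpi_{N-i}),V(\varpi_{N-j})}$ apparent in \eqref{eq:denom A^(1)_n} reduces the check to two cases: (a) both $i,j\le\lfloor N/2\rfloor$, and (b) $i\le\lfloor N/2\rfloor<j$. In case (a), $\pi^{(2)}$ fixes both vertices, and the extra ``second-type'' factor $(z+q^N(-q)^{-i-j+2s})$ in $d^{A^{(2)}}_{i,j}$ has zeros failing the parity condition defining $\mathscr S_0$, so it contributes no new arrows inside $\mathscr S_0$. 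In case (b), with $j'=N-j\le\lfloor N/2\rfloor$ and $\pi^{(2)}(j,y)=(j',(-1)^{N-1}y)$, the $A^{(1)}$ arrows (first-type zeros of $d^{A^{(1)}}_{V(\varpi_i),V(\varpi_j)}$) are bijectively matched with the $A^{(2)}$ second-type zeros via the key identity
\[
-q^N(-q)^{-i-j'+2s}\;=\;(-1)^{N-1}(-q)^{N-i-j'+2s},
\]
combined with $N-i-j'=j-i$; here the factor $(-1)^{N-1}$ is precisely the rescaling built into $\pi^{(2)}$ on the second-half index.

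The main obstacle will be the careful sign bookkeeping: the cases $N$ even and $N$ odd behave differently because $(-1)^{N-1}$ flips, and the middle index $n=\lfloor N/2\rfloor$ in the $A^{(2)}_{2n-1}$-case needs special attention since the identification $V(\varpi_n)_x\simeq V(\varpi_n)_{-x}$ there is exactly what prevents $\pi^{(2)}$ from being two-to-one. Once the sign identity above is verified and the parity analysis is carried out in both parities, the arrow matching reduces to a routine finite case check.
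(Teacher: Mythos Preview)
Your verification on $\mathscr S_0$ is correct and is exactly what the paper asserts; the paper's own argument is the single line ``By \eqref{eq:denom A^(1)_n} and \eqref{eq:denom A^(2)_N-1}'', so your case analysis (a)--(b) with the sign identity $-q^N(-q)^{-i-j'+2s}=(-1)^{N-1}(-q)^{N-i-j'+2s}$ is a genuine and accurate expansion of the same approach.

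There is, however, a gap in your passage to the full quiver $\mathscr S$. The claim that the $\mathscr S$-bijection ``follows connected component by component'' does not hold: $\pi^{(2)}$ is two-to-one on $\hat I_{A^{(1)}_{N-1}}$. For $N$ odd (so $(-1)^{N-1}=1$) both $(1,x)$ and $(N-1,x)$ map to $(1,x)$, yet $V^{(1)}(\varpi_1)_x\not\cong V^{(1)}(\varpi_{N-1})_x$, so these are distinct vertices of $\mathscr S(A^{(1)}_{N-1})$; for $N$ even the collision is between $(1,x)$ and $(N-1,-x)$. What makes the restriction to $\mathscr S_0$ injective is precisely your parity analysis: the two preimages of any target vertex always lie in \emph{different} connected components of $\mathscr S(A^{(1)}_{N-1})$. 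Globally, $\mathscr S(A^{(1)}_{N-1})$ has twice as many components as $\mathscr S(A^{(2)}_{N-1})$---the spectral stabilizer of $\mathscr S_0$ is $q^{2\Z}$ on the untwisted side but $(-q)^{\Z}$ (for $N$ odd) or $\langle q^2,-1\rangle$ (for $N$ even) on the twisted side---and $\pi^{(2)}$ folds them in pairs. Thus $\pi^{(2)}$ is a degree-two quiver covering on the full $\mathscr S$, not an isomorphism. This is arguably an imprecision in the statement itself; only the $\mathscr S_0$ claim is ever used downstream (the applications in Lemma~\ref{lem:intertwiners} and Corollary~\ref{cor:surjhoms} all take place inside $\mathscr S_0$), and your argument establishes that part completely.
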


To avoid the confusion, we use the notation $V^{(t)}(\varpi_i)$ for
the fundamental representation of weight $\varpi_i$ of
$U_q'(A^{(t)}_{N-1})$ ($t=1,2$).
 We  also  use  the following notation:
for $(i,x) \in \mathscr S (A^{(t)}_{N-1})$, set $V^{(t)}(i,x)\seteq
V^{(t)}(\varpi_i)_x$ ($t=1,2$). We write $V(i,x)$ instead of
$V^{(t)}(i,x)$ when there is no afraid of confusion.

We record the following propositions here for the later use.

\begin{prop} [{\cite[Theorem 4.15]{Her101}}] \label{prop:dim} For all
$(i,x) \in \mathscr S({A^{(1)}_{N-1}})$, we have
$$\dim_\cor V^{(1)}(i,x) = \dim_\cor V^{(2)}(\pi^{(2)}(i,x)).$$

\end{prop}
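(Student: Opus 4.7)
The plan is to reduce the dimension equality to a classical binomial telescoping identity, with the nontrivial input being the known branching rule of $V^{(2)}(\varpi_j)$ under the underlying classical quantum subalgebra.

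First, I would recall that $V^{(1)}(\varpi_i)$ is the $q$-analogue of the $i$-th exterior power of the standard representation of $U_q(A_{N-1})$, so $\dim_\cor V^{(1)}(i,x) = \binom{N}{i}$ for every spectral parameter $x$. By the definition \eqref{eq: pi2 map} of $\pi^{(2)}$, the image $\pi^{(2)}(i,x)$ has first coordinate $j := \min(i, N-i) \in \{1,\ldots,\lfloor N/2 \rfloor\}$. Combined with the elementary symmetry $\binom{N}{i} = \binom{N}{N-i}$, the claim is reduced to the statement that
$$\dim_\cor V^{(2)}(\varpi_j) = \binom{N}{j} \qquad \text{for all } 1 \le j \le \lfloor N/2 \rfloor.$$

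Next I would invoke the branching rule describing $V^{(2)}(\varpi_j)$ as a module over the classical quantum subalgebra $U_q(\g_0) \subset \uqpg$ obtained by deleting the affine node. For $\g = A^{(2)}_{2n-1}$ (so $N = 2n$, $\g_0 = C_n$) the decomposition is
$$V^{(2)}(\varpi_j)\big|_{U_q(C_n)} \cong \bigoplus_{k=0}^{\lfloor j/2 \rfloor} V^{C_n}(\omega_{j-2k}),$$
and an analogous decomposition holds for $A^{(2)}_{2n}$ with $\g_0 = B_n$; here one must verify that the non-standard choice of $\alpha_0$ in Table~\ref{table:Dynkin} (the longest simple root) makes $V^{(2)}(\varpi_n)$ correspond to $\Lambda^n$ of the vector representation of $B_n$ rather than to a spinor representation, which is consistent with the rescaling $\varpi_n = 2\Lambda_n - \Lambda_0$. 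Combining this decomposition with the classical dimension formulas $\dim V^{C_n}(\omega_l) = \binom{2n}{l} - \binom{2n}{l-2}$ and $\dim V^{B_n}(\omega_l) = \binom{2n+1}{l} - \binom{2n+1}{l-2}$ (with the convention $\binom{m}{-1} = \binom{m}{-2} = 0$), the sum telescopes to $\binom{N}{j}$ as required.

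The main obstacle is justifying the branching decomposition, which is the only nontrivial input and does not follow from formal manipulations. In the $A^{(2)}_{2n-1}$ case it can be extracted from the general analysis of level-$1$ Kirillov--Reshetikhin modules (in the style of Chari--Pressley); in the $A^{(2)}_{2n}$ case one must in addition carefully track the effect of the non-standard enumeration of simple roots on the classical part. Alternatively — and arguably more cleanly — the whole proposition is a direct consequence of the isomorphism of \cite[Theorem 4.15]{Her101} between the images of the $q$-character and the twisted $q$-character homomorphisms, together with the fact (established in loc.~cit.) that this isomorphism maps $q$-characters of Kirillov--Reshetikhin modules to twisted $q$-characters of the corresponding ones, since the total dimension of a module is recovered from its (twisted) $q$-character by specializing all variables to $1$.
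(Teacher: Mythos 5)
First, note that the paper does not actually prove this statement: Proposition~\ref{prop:dim} is imported wholesale as a citation of \cite[Theorem 4.15]{Her101}. Your closing paragraph --- deducing the dimension equality from Hernandez's isomorphism between the images of $\chi_q$ and $\chi_q^\sigma$ together with its compatibility with Kirillov--Reshetikhin modules, and then specializing all variables of the (twisted) $q$-character to $1$ --- is therefore exactly the route the paper takes, and it is the only part of your argument that is both complete and correct. Your reduction of the direct computation to $\dim_\cor V^{(2)}(\varpi_j)=\binom{N}{j}$ for $1\le j\le\lfloor N/2\rfloor$ is fine, and the $A^{(2)}_{2n-1}$ case is handled correctly: the classical subalgebra is $U_q(C_n)$, the branching $V^{(2)}(\varpi_j)\vert_{U_q(C_n)}\cong\bigoplus_k V^{C_n}(\omega_{j-2k})$ is the standard statement that the level-one KR module restricts to $\Lambda^j$ of the vector representation, and the telescoping with $\dim V^{C_n}(\omega_l)=\binom{2n}{l}-\binom{2n}{l-2}$ gives $\binom{2n}{j}$.

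The $A^{(2)}_{2n}$ case, however, contains a genuine error. The formula $\dim V^{B_n}(\omega_l)=\binom{2n+1}{l}-\binom{2n+1}{l-2}$ is the \emph{symplectic} dimension formula; for $\mathfrak{so}_{2n+1}$ the exterior powers $\Lambda^l$ of the vector representation are already irreducible, so $\dim V^{B_n}(\omega_l)=\binom{2n+1}{l}$ for $l<n$ (and $\dim V^{B_n}(2\omega_n)=\binom{2n+1}{n}$). Consequently the restriction to $U_q(B_n)$ cannot be ``analogous'' to the $C_n$ branching: a sum $\bigoplus_k V^{B_n}(\omega_{j-2k})$ with the correct $B_n$ dimensions would overshoot $\binom{2n+1}{j}$. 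The correct statements are either that $V^{(2)}(\varpi_j)$ restricts irreducibly to $\Lambda^j$ of the vector representation of $B_n$ (with the paper's enumeration of $A^{(2)}_{2n}$), or, with the standard Kac enumeration where the classical subalgebra is $C_n$, that $V^{(2)}(\varpi_j)\vert_{U_q(C_n)}\cong\bigoplus_{l=0}^{j}V^{C_n}(\omega_l)$ (all $l$, not only those of the same parity as $j$), whose dimension is $\binom{2n}{j}+\binom{2n}{j-1}=\binom{2n+1}{j}$. In your write-up the wrong branching rule and the wrong dimension formula happen to cancel and produce the right number, but as it stands the argument is not a proof; the branching-rule input would in any case need a citation or a proof, which is presumably why the authors simply quote Hernandez.
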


\begin{prop}
[{\cite[Theorem 6.1]{CP96}}] \label{prop:CP} Let $\g=A^{(1)}_{N-1}$,
$1 \le i,j,k \le N-1$, $x,y,z\in \cor^\times$. Then \eqn
\Hom_{U'_q(\g)}(V^{(1)}(\varpi_i)_x \otimes V^{(1)}(\varpi_j)_y,
V^{(1)}(\varpi_k)_z) \neq 0 \eneqn if and only if one of the
following conditions holds: \bnum
\item
$ i+j < N$, $k=i+j$,  $x/z= (-q)^{-j}$, $ y/z = (-q)^i$,
\item
$i+j > N$, $ k= i+j-N$, $ x/z=(-q)^{-N+j}$,  $ y/z=(-q)^{N-i}$.
\end{enumerate}
\end{prop}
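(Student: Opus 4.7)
The plan is to combine the denominator formula \eqref{eq:denom A^(1)_n} with the classical $U_q(A_{N-1})$-decomposition of tensor products of exterior powers. Since $V^{(1)}(\varpi_k)_z$ is simple, any non-zero homomorphism $\Phi\col V^{(1)}(\varpi_i)_x\tens V^{(1)}(\varpi_j)_y\to V^{(1)}(\varpi_k)_z$ is surjective, and the dominant extremal vector $v_i\tens v_j$ of the source has classical weight $\cl(\varpi_i+\varpi_j)$, which does not occur in $V(\varpi_k)_z$. Hence $\Phi$ must vanish on $v_i\tens v_j$, so the source cannot be simple; by \cite[Corollary~2.4]{AK} together with \eqref{eq:denom A^(1)_n}, this forces $y/x=(-q)^{|i-j|+2s}$ for some $1\le s\le\min(i,j,N-i,N-j)$.

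Next, one pins down which of these poles produce a fundamental representation as a quotient. Restricting to the finite-type subalgebra $U_q(A_{N-1})$, each fundamental representation is $V^{(1)}(\varpi_k)\cong\Lambda^k(\C^N)$, and a standard Pieri computation gives
$$\Lambda^i\tens\Lambda^j\cong\bigoplus_{r=\max(0,i+j-N)}^{\min(i,j)} V^{\mathrm{fin}}(\varpi_r+\varpi_{i+j-r})$$
with the convention $\varpi_0=\varpi_N=0$. The summands that are themselves fundamental correspond to the two extreme values of $r$: $r=0$ gives $V^{\mathrm{fin}}(\varpi_{i+j})$ when $i+j\le N-1$, and $r=i+j-N$ gives $V^{\mathrm{fin}}(\varpi_{i+j-N})$ when $i+j\ge N+1$. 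All intermediate summands have classical highest weight supported on two distinct nodes, so they cannot be of the form $V(\varpi_k)_z$. The two extremal summands correspond respectively to the extremal poles $y/x=(-q)^{i+j}$ (when $s=\min(i,j)$) and $y/x=(-q)^{2N-i-j}$ (when $s=\min(N-i,N-j)$), matching the two cases of the statement.

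Finally, one determines the spectral parameter $z$ of the quotient. At the extremal pole $y/x=(-q)^{i+j}$ (case~(i)), the regularized $R$-matrix factors through $V(\varpi_{i+j})_z$ for a unique $z$, which I would compute either by writing down an explicit quantum-wedge map $V(\varpi_i)_x\tens V(\varpi_j)_y\to V(\varpi_{i+j})_z$ or by tracking the action on a distinguished $\ell$-weight vector, yielding $x/z=(-q)^{-j}$ and $y/z=(-q)^{i}$. Case~(ii) is handled symmetrically at the opposite extremal pole $y/x=(-q)^{2N-i-j}$. The main obstacle is this last parameter-matching step, since one must make the evaluation parameters of the three modules compatible in a coherent way; the cleanest route is through the explicit evaluation-module realization of $V(\varpi_k)$ on $\Lambda^k(\C^N)$, which together with the coproduct formula of $U'_q(\g)$ determines $z$ uniquely from the image of the dominant extremal vector.
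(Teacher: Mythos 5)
First, note that the paper does not prove this proposition at all: it is imported verbatim as \cite[Theorem 6.1]{CP96}, so your attempt is being measured against the literature rather than against an argument in the text. Your skeleton for the ``only if'' direction is sound: the weight argument showing $\Phi(v_i\tens v_j)=0$ (the classical weight $\varpi_i+\varpi_j$ has a coordinate equal to $2$ in the $\epsilon$-basis, so it is not a weight of $\Lambda^k(\C^N)$), hence non-simplicity of the source; and the classical Pieri decomposition correctly isolates $k=i+j$ and $k=i+j-N$ as the only possible fundamental constituents. One imprecision: \cite[Corollary 2.4]{AK} combined with \eqref{eq:denom A^(1)_n} only forces \emph{$y/x$ or $x/y$} to be a zero of the denominator, and since the zero set $\{(-q)^{|i-j|+2s}\}$ is not inversion-stable these are genuinely different conditions; you need the classical decomposition step to also rule out the wrong orientation (where the head is the non-fundamental evaluation-type constituent), which it does, but you should say so.

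The genuine gap is that the two steps carrying the actual content of Dorey's rule are asserted rather than proved. (1) You claim the extremal poles $y/x=(-q)^{i+j}$ and $y/x=(-q)^{2N-i-j}$ are exactly the ones at which the \emph{head} of $V^{(1)}(\varpi_i)_x\tens V^{(1)}(\varpi_j)_y$ is the fundamental constituent; at a non-extremal pole the head is a two-column simple module, but this identification of heads requires an argument (e.g.\ computing the Drinfeld polynomial of $\operatorname{Im}\Rnorm$, or a $q$-character computation), not just the existence of $\Lambda^{i+j}$ as a classical summand. (2) The determination of $x/z$ and $y/z$, and with it the existence of the surjection, is exactly the statement being proved, and ``I would compute it via an explicit quantum-wedge map or by tracking an $\ell$-weight vector'' is a plan, not a proof. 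Within the framework of this very paper there is a cleaner route for the existence half that avoids the explicit computation: apply the exact functor $\F^{(1)}$ of \cite{KKK13A} to the exact sequences of Proposition \ref{prop:exact sequences} (v) and (vi), exactly as the proof of Corollary \ref{cor:surjhoms} does for $\F^{(2)}$; since $\F^{(1)}(L(a,b))\simeq V^{(1)}(\varpi_{b-a+1})_{(-q)^{a+b}}$, the epimorphism $L(a,b)\conv L(a',b')\epito \hd$ lands on the fundamental module with precisely the spectral parameters of the statement. As written, your proposal establishes the combinatorial shell of the answer but not the parameter normalization that is its point.
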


\begin{prop}[{ \cite[Theorem 3.5, Theorem 3.9]{Oh14}}] \label{prop:Oh14}
For $1 \le i, j \le \lfloor N/2 \rfloor $ such that $i+j \le \lfloor
N/2 \rfloor$,
 there exists an exact sequence
\eqn&& 0\To V^{(2)}(\varpi_{i+j})\To[\iota_{i,j}]
V^{(2)}(\varpi_j)_{(-q)^{i}}
\otimes V^{(2)}(\varpi_i)_{(-q)^{-j}}\\
&&\hs{20ex}\To[\ h\ ] V^{(2)}(\varpi_i)_{(-q)^{-j}} \otimes
V^{(2)}(\varpi_j)_{(-q)^{i}}
 \To[p_{i,j}] V^{(2)}(\varpi_{i+j})\To0.
 \eneqn
  Assume that $N-1=2n$ $(n \ge 2)$.
  Then there exists an exact sequence
of $U_q'(A^{(2)}_{2n})$-modules \eqn &&0\To V^{(2)}(\varpi_{n})
\To[\iota_{n,1}] V^{(2)}(\varpi_1)_{(-q)^{n}}
  \otimes V^{(2)}(\varpi_n)_{(-q)^{-1}}\\
  &&\hs{20ex}\To[\ \ ] V^{(2)}(\varpi_n)_{(-q)^{-1}} \otimes V^{(2)}(\varpi_1)_{(-q)^{n}}
  \To[p_{n,1}] V^{(2)}(\varpi_{n}) \To0.
  \eneqn
 \end{prop}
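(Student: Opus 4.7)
The plan is to locate the critical pole of the normalized $R$-matrix, extract $\iota_{i,j}$ and $p_{i,j}$ as the socle/head maps at that pole, define $h$ on the common complementary composition factor, and verify exactness by a length-two argument.

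By \eqref{eq:denom A^(2)_N-1}, the factor $s=\min(i,j)$ in the first product gives a zero of $d_{i,j}(z)$ at $z=(-q)^{|i-j|+2\min(i,j)}=(-q)^{i+j}$. The hypothesis $i+j\le\lfloor N/2\rfloor$ implies that the second family of zeros $-q^N(-q)^{-i-j+2s}$ does not coincide with $(-q)^{i+j}$ as a polynomial identity in $q$, so this zero is simple. Hence $R^{\mathrm{norm}}_{V^{(2)}(\varpi_i),V^{(2)}(\varpi_j)}(z_1,z_2)$ has a simple pole at $z_1=(-q)^{-j},\;z_2=(-q)^i$, and multiplying by $d_{i,j}(z_2/z_1)$ and evaluating at this specialization produces a non-zero, non-invertible renormalized $U'_q$-homomorphism
\[ R^{\mathrm{ren}}\col V^{(2)}(\varpi_i)_{(-q)^{-j}}\otimes V^{(2)}(\varpi_j)_{(-q)^i} \longrightarrow V^{(2)}(\varpi_j)_{(-q)^i}\otimes V^{(2)}(\varpi_i)_{(-q)^{-j}}. \]

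By standard head/socle lemmas for good modules at a simple pole (see \cite{AK,Kas02}), $\ker R^{\mathrm{ren}}$ is the unique maximal proper submodule of the source and $\Im R^{\mathrm{ren}}$ is the simple socle of the target. Restricting to the underlying finite-type subalgebra (of type $C_n$ or $B_n$) and comparing dominant extremal weights, this common simple factor is identified with $V^{(2)}(\varpi_{i+j})$: the vector $u_{\varpi_i}\otimes u_{\varpi_j}$ has classical weight $\cl(\varpi_i+\varpi_j)$ whose dominant extremal component matches $\cl(\varpi_{i+j})$, and classical multiplicity-one in the tensor-product decomposition rules out other candidates. I then take $\iota_{i,j}$ as the socle inclusion $V^{(2)}(\varpi_{i+j})\hookrightarrow V^{(2)}(\varpi_j)_{(-q)^i}\otimes V^{(2)}(\varpi_i)_{(-q)^{-j}}$ and $p_{i,j}$ as the head projection $V^{(2)}(\varpi_i)_{(-q)^{-j}}\otimes V^{(2)}(\varpi_j)_{(-q)^i}\twoheadrightarrow V^{(2)}(\varpi_{i+j})$; the map $h$ is defined as the composition of (a) the head projection onto the complementary simple factor $M$ of its source, (b) the unique iso (Schur) identifying the corresponding copies of $M$ in source and target, and (c) the socle inclusion of $M$ into the target.

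The compositions $h\circ\iota_{i,j}$ and $p_{i,j}\circ h$ vanish by construction, and exactness at the outer terms is automatic. The main obstacle is establishing that each tensor product has length exactly two: once this is granted, middle exactness follows immediately because $h$ sends the quotient by its kernel isomorphically onto the maximal proper submodule of its target. Length two is verified via a classical-decomposition and $q$-character computation at the specific spectral parameters, combining the classical multiplicity-one fact with the simplicity of the $R$-matrix's pole. The $A^{(2)}_{2n}$ case $(i,j)=(n,1)$ is treated identically, with $d_{n,1}(z)$ having a simple zero at $(-q)^{n+1}$ and $V^{(2)}(\varpi_n)$ (the spin-type fundamental) appearing with multiplicity one in $V^{(2)}(\varpi_1)\otimes V^{(2)}(\varpi_n)$ under restriction to $B_n$.
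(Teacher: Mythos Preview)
The paper does not prove this proposition: it is quoted from \cite[Theorem 3.5, Theorem 3.9]{Oh14} and used as a black box. So there is no ``paper's own proof'' to compare against, and the question is whether your sketch stands on its own.

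Your outline has two genuine gaps. First, the map $h$ in the statement goes from $V^{(2)}(\varpi_j)_{(-q)^i}\otimes V^{(2)}(\varpi_i)_{(-q)^{-j}}$ to the swapped tensor product, whereas your $R^{\mathrm{ren}}$ goes the other way. In fact $h$ is simply the normalized $R$-matrix $\Rnorm_{V^{(2)}(\varpi_j),V^{(2)}(\varpi_i)}$ specialized at these parameters; since $d_{j,i}(z)$ has no zero at $(-q)^{-i-j}$ this specialization is well defined without any renormalization, and it is non-invertible because its would-be inverse has a pole. Your ad hoc construction of $h$ as a composition through an auxiliary simple $M$ presupposes exactly the length-two structure you are trying to establish, and the Schur-lemma identification in step (b) is not a priori available until you know $M$ is simple and occurs with multiplicity one on both sides.

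Second, and more seriously, the length-two assertion is the entire content of the result and you have not proved it: ``verified via a classical-decomposition and $q$-character computation'' is a promissory note, not an argument. The classical restriction of $V^{(2)}(\varpi_i)\otimes V^{(2)}(\varpi_j)$ to the finite-type subalgebra of type $B_n$ or $C_n$ typically decomposes into many more than two pieces, so classical multiplicity alone cannot give length two; one needs the full $q$-character or Dorey-type analysis carried out in \cite{Oh14}. Likewise the identification of the simple head/socle with $V^{(2)}(\varpi_{i+j})$ requires more than matching dominant extremal weights, since several simple $U'_q(\g)$-modules share the same classical highest weight. These are precisely the computations that \cite{Oh14} performs.
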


\section{The functor $\F^{(2)}$}\label{sec:F2}
\subsection{Symmetric quiver Hecke algebra of type $A_\infty$}
Let $V=V^{(2)}(\varpi_1)$ be the fundamental representation of
$U_q'(A^{(2)}_{N-1})$ with extremal weight $\varpi_1$. Let
$\mathcal{S}=\{V\}$, $J=\Z$ and let $X\col J\to \cor^\times$ be the
map given by $X(j)=q^{2j}$. Then we have \eq
d_{ij}=\delta(j=i+1)\qtext{for $i,j\in J$.} \eneq For $i,j\in J$, we
have
\begin{align*}(\al_i,\al_j)=\begin{cases}-1&\text{if $i-j=\pm1$,}\\
2&\text{if $i=j$,}\\
0&\text{otherwise,}
\end{cases} \\
P_{ij}(u,v)=(u-v)^{\delta(j=i+1)}c_{i,j}(u,v),\end{align*} and
$$Q_{ij}(u,v)=\begin{cases}
\pm(u-v)&\text{if $j=i\pm1$,}\\
0&\text{if $i=j$,}\\
1&\text{otherwise.}
\end{cases}$$
The family $\{c_{i,j}(u,v)\}_{i,j\in J}$ will be given later in
\eqref{eq:c}.

Therefore the corresponding \KLR\ $R$ is of type
$\mathrm{A}_{\infty}$.
\medskip

We take
$$P_J=\soplus_{a\in \Z}\Z\oep_a$$
as the weight lattice with $(\oep_a,\oep_b)=\delta_{a,b}$. The root
lattice $\rtl_J=\soplus_{i\in J}\Z\al_i$ is embedded into $P_J$ by
$\al_i=\oep_i-\oep_{i+1}$. We write  $\rtl_J^+$ for $\soplus_{i\in
J}\Z_{\ge0}\al_i$.

Recall  that
 the functor
  $$\F^{(2)} \col \soplus_{\beta\in  \rtl^+_J} \Modg(R(\beta))\to
\Mod( \UAtwo) $$
 defined in \eqref{eq:the functor} is exact (Theorem \ref{thm:exact}).

\bigskip

\subsection{Segments}
 A pair of integers $(a,b)$ such that $ a \leq b$
is called a {\em segment}.  The \emph{length} of $(a,b)$  is
$b-a+1$. A \emph{multisegment} is a  finite sequence of  segments.

 For a segment $(a,b)$ of length $\ell$, we define a graded $1$-dimensional
 $R( \oep_a-\oep_{b+1} )$-module
 $L(a,b)=\cor u{(a,b)}$ in $R( \oep_a-\oep_{b+1} )\gmod$ which is generated by a
vector $u{(a,b)}$ of degree $0$ with the action of $R(
\oep_a-\oep_{b+1} )$ given by
 \begin{align}
x_m u{(a,b)} =0 , && \tau_k u{(a,b)} =0 , && e(\nu) u{(a,b)}=
\begin{cases}
u{(a,b)} & \text{if $\nu=(a,a+1, \ldots, b)$,} \\
0 & \text{otherwise.}
\end{cases}
\end{align}
We understand that $L(a,a-1)$ is the 1-dimensional module over $R(0)
= \cor$  and the length of $(a,a-1)$ is $0$. When $a=b$, we use the
notation $L(a)$ instead of $L(a,a)$.

We give a total order on the set of segments as follows: \eqn
\label{eq:right order} (a_1,b_1) > (a_2,b_2) \quad  \ \text{if} \
a_1 > a_2 \ \text{or} \ a_1=a_2 \ \text{and} \ b_1 > b_2. \eneqn

Then we have
\begin{prop} [{\cite[Theorem 4.8, Theorem 5.1]{KP11},
\cite[Proposition 4.2.7]{KKK13A}}] \label{prop:multisegments}\hfill
\bnum
\item
Let $M$ be a finite-dimensional simple  graded  $R(\ell)$-module.
Then there exists a unique pair of a multisegment $\big ((a_1,b_1),
\ldots , (a_t,b_t) \big)$ and an integer $c$ such that \bna
\item $(a_k, b_k) \ge (a_{k+1}, b_{k+1})$ for $1 \leq k \leq t-1$,
\item $\sum_{k=1}^t \ell_k=\ell$, where $\ell_k \seteq b_k -a_k +1$,
\item
$ M \simeq q^c\hd \big(L(a_1,b_1)\conv \cdots \conv L(a_t,b_t)
\big)$,
 where $\hd$ denotes the head.
\ee

\item Conversely, if  a multisegment   $\big ((a_1,b_1), \ldots , (a_t,b_t) \big)$  satisfies
$\rm (a)$ and $\rm (b)$, then $\hd \big(L(a_1,b_1)\conv \cdots \conv
L(a_t,b_t) \big)$ is a simple   graded  $R(\ell)$-module. Moreover,
$\hd \big(L(a_1,b_1)\conv \cdots \conv L(a_t,b_t) \big)$ is
isomorphic to $\Im \bl \rmat{L(a_1,b_1), \cdots, L(a_t,b_t)} \br$ up
to a grade shift. \ee
\end{prop}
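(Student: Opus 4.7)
The plan is to derive Proposition~\ref{prop:multisegments} from the classification of simple modules over \KLRs\ of type $A_\infty$ established in \cite[Theorem 4.8, Theorem 5.1]{KP11}, combined with the R-matrix interpretation of heads of standard modules from \cite[Proposition 4.2.7]{KKK13A}. All the substantive work has been done in these references, so the task is to assemble them in our present setup.

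First I would establish the dictionary between segments and positive roots. In the $A_\infty$ Cartan datum the positive roots are precisely $\oep_a-\oep_{b+1}=\al_a+\al_{a+1}+\cdots+\al_b$ for $a\le b$, in bijection with segments $(a,b)$. A direct inspection of the defining relations of $R$ shows that $L(a,b)$ is the one-dimensional cuspidal $R(\oep_a-\oep_{b+1})$-module attached to this positive root, and the total order on segments in \eqref{eq:right order} corresponds to a convex order on the positive roots.

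For part (i), I would invoke \cite[Theorem 4.8]{KP11}: once a convex order is fixed, every finite-dimensional simple graded $R(\ell)$-module is obtained, uniquely up to grade shift, as the head of a standard module $L(a_1,b_1)\conv\cdots\conv L(a_t,b_t)$ associated with a weakly decreasing multisegment. Uniqueness of the multisegment follows from the fact that the classes of such heads form a basis of $K(R(\ell)\gmod)$ indexed by multisegments of total length $\ell$.

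For part (ii), simplicity of the head is again part of the same theorem. The identification of this head with $\Img\bigl(\rmat{L(a_1,b_1),\ldots,L(a_t,b_t)}\bigr)$ is then a direct application of \cite[Proposition 4.2.7]{KKK13A}: the R-matrix is non-zero by the Yang--Baxter equation and the non-vanishing of $\rmat{\scbul,\scbul}$ on pairs of non-zero modules from \cite[Proposition 1.4.4]{KKK13A}; its source has simple head by (i); its target has the same module as simple socle, by the dual statement of (i) applied with the opposite convex order; hence the image, being a non-zero subquotient sandwiched between these two occurrences of the same simple, is this simple up to a grade shift. The main technical point to verify is the convexity of the chosen order with respect to the degrees $\ds{L(a,b),L(a',b')}$ from \eqref{def:s}, which is a routine computation using the explicit form of $Q_{ij}(u,v)$ in the $A_\infty$ case.
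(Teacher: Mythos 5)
Your proposal is correct and takes essentially the same route as the paper, which offers no independent proof of this proposition and simply imports it from \cite[Theorems 4.8 and 5.1]{KP11} and \cite[Proposition 4.2.7]{KKK13A} --- precisely the two sources you assemble, together with the standard head/socle sandwich argument for the image of the composed R-matrix. The only loose phrase is the claim that $\rmat{L(a_1,b_1),\ldots,L(a_t,b_t)}$ is non-zero ``by the Yang--Baxter equation and the non-vanishing of $\rmat{\scbul,\scbul}$'' (a composition of non-zero maps need not be non-zero), but this non-vanishing is part of the cited Proposition 4.2.7, so nothing essential is missing.
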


If a multisegment $\big ((a_1,b_1), \ldots , (a_t,b_t) \big)$
satisfies the condition (a) above, then we say that it is an {\em
ordered multisegment}. We call the  ordered  multisegment $\big
((a_k,b_k)\big)_{1\le k \le t}$ in Proposition
\ref{prop:multisegments}~(i) the {\em multisegment associated with
$M$}.

\begin{prop} [{\cite[Proposition 4.2.3]{KKK13A} }]\label{prop:exact sequences}
For $a \leq b$ and $a' \leq b'$,
 set $\ell=b-a+1$, $\ell'=b'-a'+1$, $\beta=\oep_{a}-\oep_{b+1}$
and $\beta'=\oep_{a'}-\oep_{b'+1}$.
  \bnum
\item If $a'=a$ and $b'=b$, then
we have $\rmat{L(a,b),L(a,b)}=\id_{L(a,b)\conv  L(a,b)}$.
  \item
\bna
\item
If $a \leq a' \leq b \leq b'$, then
 there exists a nonzero homomorphism
  \begin{align*}
  \rmat{L(a,b),L(a',b')} : L(a,b) \conv L(a',b') \to q^{\delta_{a,a'}+\delta_{b,b'}-2}L(a',b') \conv L(a,b).
  \end{align*}
\item  Unless $a \leq a' \leq b \leq b'$,
there exists a nonzero homomorphism
 \begin{align*}
   g\seteq \rmat{L(a,b),L(a',b')}: L(a,b) \conv L(a',b')\to q^{(\beta,\beta')}
 L(a',b') \conv L(a,b).
  \end{align*}
\ee

\item If $a\le a'\le b'\le b$, then
 $L(a,b) \conv L(a',b')$ is simple and
$$L(a,b) \conv L(a',b') \simeq q^{\delta_{a,a'}-\delta_{b,b'}}L(a',b') \conv L(a,b).$$
\item If $b'<a-1$,
then  $L(a,b) \conv L(a',b')$ is simple and
$$g\col L(a,b) \conv L(a',b')\isoto L(a',b') \conv L(a,b).$$
  \item If $a' < a\le b' < b$, then
we have the following exact sequence
\begin{align*}
 & 0 \To qL(a',b) \conv L(a, b') \To
  L(a, b) \conv L(a',b')\\
&\hs{20ex}\To[{\hs{1.5ex} g\hs{1.5ex}}] L(a',b') \conv L(a, b)\To
 q^{-1}L(a', b) \conv L(a,b')\To 0.\end{align*}
Moreover, the image of $g$ coincides with the head of $ L(a, b)\conv
L(a',b')$ and the socle of $ L(a',b') \conv L(a, b)$.
\item If $a=b'+1$, then we have an exact sequence
\begin{align*}
  0 \to q L(a',b) \To
 L(a, b) \conv L(a',b')
\To[{g}] q^{-1}L(a',b') \conv L(a, b)\to q^{-1} L(a', b)\rightarrow
0.
\end{align*}
Moreover, the image of ${g}$ coincides with the head of $ L(a,
b)\conv L(a',b')$ and the socle of $ q^{-1}L(a',b') \conv L(a, b)$.
\ee
\end{prop}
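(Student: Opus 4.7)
The plan is to exploit the fact that each $L(a,b)$ is one-dimensional with all $x_k$ acting by zero. By the PBW-type decomposition of $R(\beta+\beta')$, the convolution $L(a,b)\conv L(a',b')$ acquires an explicit basis $\{\tau_w\cdot(u(a,b)\tens u(a',b'))\}_{w\in\sym^{\min}_{\ell,\ell'}}$ indexed by shortest right coset representatives of $\sym_\ell\times\sym_{\ell'}$ in $\sym_{\ell+\ell'}$, where $\ell=b-a+1$ and $\ell'=b'-a'+1$. All morphisms in the statement are normalizations of the master intertwiner $\vphi_{w[\ell',\ell]}$ applied to the highest-weight generator, so the entire proposition reduces to a combinatorial analysis of this one element.

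The first step is to compute the order $s$ of vanishing of $R_{L(a,b)_z,L(a',b')_{z'}}$ along $z'=z$. Expanding $\vphi_{w[\ell',\ell]}$ as a product of elementary intertwiners from \eqref{def:int} and using the defining relations, each equal-index crossing contributes a factor $(x_i-x_j)$ which becomes $(z-z')$ after specialization, while non-equal crossings contribute either $1$ or $\pm(x_i-x_j)$ according to $Q_{ij}$. Counting these crossings in each configuration yields the exact value of $s$, hence of $\ds{L(a,b),L(a',b')}$, which is precisely the degree shift appearing in (ii). Case (i) is immediate: when $(a,b)=(a',b')$ the source and target of $\rmat{}$ are canonically identified and the normalization is the identity. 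Case (iv) with $b'<a-1$ has no equal-index crossings and all relevant $Q_{ij}=1$, so $\vphi_{w[\ell',\ell]}=\tau_{w[\ell',\ell]}$ is a bijection between modules of equal dimension; both are thus simple and $\rmat{}$ is an isomorphism. Case (iii) with $a\le a'\le b'\le b$ is handled symmetrically in both directions: both $\rmat{L(a,b),L(a',b')}$ and $\rmat{L(a',b'),L(a,b)}$ are nonzero by the computation above, and their composition is a nonzero scalar by \cite[Proposition 1.4.4]{KKK13A}, forcing both to be isomorphisms and $L(a,b)\conv L(a',b')$ to be simple.

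The core of the proof lies in cases (v) and (vi), where the convolution product admits a nontrivial composition series. For (v) with $a'<a\le b'<b$, case (iii) applied to the rearranged segments (which satisfy $a'\le a\le b'\le b$) gives that $L(a',b)\conv L(a,b')$ and $L(a,b')\conv L(a',b)$ are simple. The plan is to construct an injection $qL(a',b)\conv L(a,b')\monoto L(a,b)\conv L(a',b')$ explicitly, factoring through a specific elementary intertwiner corresponding to the unique "repairing" index position, and to verify by a dimension/character count that its cokernel realizes the image of $\rmat{L(a,b),L(a',b')}$, which by Proposition~\ref{prop:multisegments}(ii) is the unique simple head of $L(a,b)\conv L(a',b')$. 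Dually, the socle of $L(a',b')\conv L(a,b)$ coincides with this image and the final quotient is $q^{-1}L(a',b)\conv L(a,b')$. Case (vi) with $a=b'+1$ is the boundary degeneration in which $(a,b')$ would shrink to an empty segment, so the sequence in (v) collapses to a three-term sequence in which two of the four factors merge.

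The principal technical obstacle is the explicit construction of the injection $qL(a',b)\conv L(a,b')\monoto L(a,b)\conv L(a',b')$ and the verification that its cokernel is exactly $\Im(\rmat{L(a,b),L(a',b')})$. This cannot be obtained from the formal properties of $\rmat{}$ alone: the homomorphism must be produced as a specific linear combination of the basis elements $\tau_w\cdot(u(a,b)\tens u(a',b'))$ realizing the segment repairing, and the multiplicity of each composition factor must be matched against the classification of simple $R$-modules in Proposition~\ref{prop:multisegments}. Once this identification is in place, the full four-step exact sequence in (v) and its three-step degeneration in (vi) follow by combining the simplicity statements of (iii), the nonvanishing of $\rmat{}$, and the fact that head and socle are each simple.
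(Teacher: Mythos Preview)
The paper does not prove this proposition at all: it is quoted verbatim from \cite[Proposition~4.2.3]{KKK13A} and carries no proof here. Consequently there is nothing in the present paper to compare your argument against; any comparison would have to be with the original source.

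That said, a few remarks on your sketch as a standalone argument. Your treatment of (i) and (iv) is essentially correct. Your argument for (iii), however, is circular as written: you invoke \cite[Proposition~1.4.4]{KKK13A} to conclude that $\rmat{L(a',b'),L(a,b)}\circ\rmat{L(a,b),L(a',b')}$ is a nonzero scalar, and from this deduce simplicity of $L(a,b)\conv L(a',b')$. But Proposition~1.4.4 in \cite{KKK13A} gives only nonvanishing of each $\rmat{}$; to know the composite is a scalar you already need the convolution to be simple (Schur). What is actually needed is either a direct verification that the composite acts by a nonzero constant on the generator $u(a,b)\tens u(a',b')$ (a genuine computation with the intertwiners and the relations), or an independent simplicity argument via the shuffle description. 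For (v) and (vi) you correctly identify the crux---producing the injection $qL(a',b)\conv L(a,b')\monoto L(a,b)\conv L(a',b')$ and matching its cokernel with $\Im(\rmat{})$---but ``factoring through a specific elementary intertwiner'' and ``dimension/character count'' are placeholders, not arguments. The actual work in \cite{KKK13A} is exactly this explicit construction and the identification of the two-step composition series; your proposal names the obstacle without overcoming it.
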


\subsection{Properties of the functor $\F^{(2)}$}

For $k > \lfloor N /2 \rfloor +1 $ or $k < 0$, $V^{(2)}(\varpi_{k})$
is understood to be zero, and the modules $V^{(2)}(\varpi_{0})$ and
$V^{(2)}(\varpi_{\lfloor N /2 \rfloor+1}) $ are understood to be the
trivial representation.

\begin{prop} \label{prop:image of fund. repns.}
Let $(a,b)$ be a segment with length $\ell\seteq b-a+1$.
 Then we have
\begin{align*}
\F^{(2)}(L(a,b)) \simeq V(\pi^{(2)}(\ell, (-q)^{a+b})).
\end{align*}
\end{prop}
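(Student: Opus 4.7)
The plan is induction on $\ell = b - a + 1$. The base case $\ell = 1$ is an immediate consequence of Proposition~\ref{prop:image of tau}~(i): we have $\F^{(2)}(L(a)) \simeq V^{(2)}(\varpi_1)_{q^{2a}}$, and since $\pi^{(2)}(1, \cdot)$ is the identity on the second coordinate and $q^{2a} = (-q)^{2a}$, this agrees with $V(\pi^{(2)}(1, (-q)^{2a}))$. For the inductive step, I would apply Proposition~\ref{prop:exact sequences}~(vi) to the pair of segments $(a+1, b)$ and $(a, a)$ (the hypothesis $a = b' + 1$ holds), producing the four-term exact sequence
\begin{align*}
0 \to q L(a, b) \to L(a+1, b) \conv L(a) \to q^{-1} L(a) \conv L(a+1, b) \to q^{-1} L(a, b) \to 0,
\end{align*}
which in particular realizes $L(a, b)$, up to a grade shift, as the simple head of $L(a) \conv L(a+1, b)$. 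Applying the exact tensor functor $\F^{(2)}$ (exactness from Theorem~\ref{thm:exact}, used for the finite-type sub-quiver containing all the simple roots appearing in $\beta$, together with the tensor-product compatibility of Theorem~\ref{thm:conv to tensor}) and the inductive hypothesis for $L(a+1, b)$ then yields a surjection
\begin{align*}
V^{(2)}(\varpi_1)_{q^{2a}} \otimes V(\pi^{(2)}(\ell - 1, (-q)^{a+b+1})) \twoheadrightarrow \F^{(2)}(L(a, b)).
\end{align*}

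In the regime $\ell \le \lfloor N/2 \rfloor$, the left side above equals $V^{(2)}(\varpi_1)_{q^{2a}} \otimes V^{(2)}(\varpi_{\ell-1})_{(-q)^{a+b+1}}$, and Proposition~\ref{prop:Oh14} with $(i, j) = (1, \ell - 1)$, twisted by $(-q)^{a+b}$, places it in a four-term exact sequence whose terminal cokernel is $V^{(2)}(\varpi_\ell)_{(-q)^{a+b}}$. Since the latter is simple, it is the unique simple quotient of the tensor product, so the surjection above must factor isomorphically onto it, identifying $\F^{(2)}(L(a, b)) \simeq V^{(2)}(\varpi_\ell)_{(-q)^{a+b}} = V(\pi^{(2)}(\ell, (-q)^{a+b}))$. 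The base of the induction together with this step closes the first regime.

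The hard part will be the remaining regime $\lfloor N/2 \rfloor < \ell \le N - 1$, where the target $V^{(2)}(\varpi_{N - \ell})_{(-1)^{N-1}(-q)^{a+b}}$ cannot be read off from Proposition~\ref{prop:Oh14} directly, since the cokernel produced there is $V^{(2)}(\varpi_{i+j})$ with $i+j$ out of range. To bridge this I would combine the inductive surjection with a dimension comparison: the analogous theorem for $\F^{(1)}$ from \cite{KKK13A} gives $\F^{(1)}(L(a, b)) \simeq V^{(1)}(\varpi_\ell)_{(-q)^{a+b}}$, and Proposition~\ref{prop:dim} then forces $\dim V(\pi^{(2)}(\ell, (-q)^{a+b})) = \dim V^{(1)}(\varpi_\ell)_{(-q)^{a+b}}$. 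Pinning down that the simple quotient of the tensor product onto which $\F^{(2)}(L(a, b))$ surjects is exactly this fundamental module, rather than another simple of the same dimension, is the delicate last step and would require a finer analysis of normalized $R$-matrices, possibly exploiting the second zero $z = -q^N$ of $d_{1,1}(z) = (z - q^2)(z + q^N)$, which does not enter the sub-quiver structure but distinguishes the higher-$\ell$ fusion behavior of $U_q'(A^{(2)}_{N-1})$ from that of $U_q'(A^{(1)}_{N-1})$.
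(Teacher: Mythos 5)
Your skeleton (induction on $\ell$, peeling off a length-one segment via Proposition \ref{prop:exact sequences}, applying the exact tensor functor $\F^{(2)}$, and comparing with the Dorey-type sequence of Proposition \ref{prop:Oh14}) is exactly the paper's strategy; the paper peels off $L(b)$ on the right instead of $L(a)$ on the left, which is immaterial. But your key identification step has a genuine gap. From the surjection $V_{(-q)^{2a}}\otimes\F^{(2)}(L(a+1,b))\twoheadrightarrow\F^{(2)}(L(a,b))$ you conclude that $\F^{(2)}(L(a,b))$ ``must factor isomorphically onto'' the simple head of the tensor product; but a priori $\F^{(2)}(L(a,b))$ could be zero, or a non-simple quotient strictly larger than the head, and nothing you have said rules either out. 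The paper closes this by using the whole four-term sequence: $\F^{(2)}(L(a,b))$ is the cokernel of $\F^{(2)}(\rmat{L(a+1,b),L(a)})$; one first shows this map is nonzero (if it vanished, the two oppositely ordered tensor products would both be isomorphic to $\F^{(2)}(L(a,b))$, hence to each other, hence simple by Corollary 3.9 of \cite{KKKO14}, contradicting the fact that the two factors are linked in $\mathscr S(A^{(2)}_{N-1})$), and then that the relevant Hom space between the two tensor products is one-dimensional because the source is generated by the tensor product of dominant extremal weight vectors (Theorem 2.2.1 of \cite{KKK13A}, using the ordering of spectral parameters). Hence $\F^{(2)}(\rmat{L(a+1,b),L(a)})$ is a nonzero scalar multiple of the map $h$ in Proposition \ref{prop:Oh14}, and the two cokernels agree. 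Without these two ingredients the identification does not go through.

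The second gap is the regime $\lfloor N/2\rfloor<\ell\le N$, which you explicitly leave open. The paper does not bridge it with a dimension comparison (which, as you yourself observe, could never single out the correct simple module); it simply uses the Dorey-type exact sequences of \cite{Oh14} in their full range, written with $V(\pi^{(2)}(\ell,(-q)^{a+b}))$ at both ends, so the same cokernel identification works verbatim for all $2\le\ell\le N$, with $V(\pi^{(2)}(N,\cdot))$ understood as the trivial representation. Your proposed route via Proposition \ref{prop:dim} is therefore not needed and would not suffice. Finally, the statement also covers $\ell\ge N+1$, where $\F^{(2)}(L(a,b))$ must be shown to vanish: for $\ell=N+1$ the paper observes that $\F^{(2)}(L(a,b))$ is a common quotient of the two non-isomorphic simple modules $V_{(-q)^{2a}}$ and $V_{(-q)^{2b}}$ (since the length-$N$ subsegments map to the trivial representation), hence is zero, and for $\ell>N+1$ it factors through $\F^{(2)}(L(a,a+N))\otimes\F^{(2)}(L(a+N+1,b))\simeq0$. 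Your proposal omits these cases entirely.
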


\begin{proof}
We will show our assertion by induction on $\ell$. In the course of
the proof, we {\it omit} the grading of modules over quiver Hecke
algebras. When $\ell =1$, we have $\F^{(2)}(L(a)) \simeq
V_{(-q)^{2a}}$ by Proposition \ref{prop:image of tau} (i).

\noindent Assume that $2 \le \ell \le N$. Consider the following
exact sequence in $R(\ell) \smod$
\begin{align} \label{eq:exact sequnce of segments}
  0 \rightarrow L(a,b) \rightarrow L(b) \conv L(a,b-1)
  \To[{\rmat{(b),(a,b-1)}}] L(a,b-1) \conv L(b)
 \rightarrow L(a,b) \rightarrow 0
\end{align}
given in Proposition \ref{prop:exact sequences} (vi). Here, we write
\eqn && \rmat{(a,b),(a',b')}\col L(a,b)\conv L(a',b')\To
L(a',b')\conv L(a,b) \eneqn for $\rmat{L(a,b),L(a',b')}$. Applying
the exact functor $\F^{(2)}$ and using the induction hypothesis, we
obtain an exact sequence \eq &&\ba{l}
0\to \F^{(2)}(L(a,b))\to V_{(-q)^{2b}}\tens V(\pi^{(2)}(\ell-1,(-q)^{a+b-1}))\\[2ex]
\hs{7ex}\To[{\;\F^{(2)}(\rmat{(b),(a,b-1)})\;}]
V(\pi^{(2)}(\ell-1,(-q)^{a+b-1})) \tens V_{(-q)^{2b}
}\to\F^{(2)}(L(a,b))\to0.\ea \eneq
If  $\F^{(2)}(\rmat{(b),(a,b-1)})$ vanishes, then we have
$$V_{(-q)^{2b}}\tens V(\pi^{(2)}(\ell-1, (-q)^{a+b-1}))
\simeq V(\pi^{(2)}(\ell-1, (-q)^{a+b-1}))\tens V_{(-q)^{2b}}$$
because they are both isomorphic to $\F^{(2)}(L(a,b))$. Hence
$V_{(-q)^{2b}}\tens V(\pi^{(2)}(\ell-1, (-q)^{a+b-1}))$ is simple,
which is a contradiction. Therefore, we have
$\F^{(2)}(\rmat{(b),(a,b-1)}) \neq 0$.

On the other hand, by Proposition \ref{prop:Oh14} we have an exact
sequence \eq &&\ba{l} 0\to V(\pi^{(2)}(\ell, (-q)^{a+b}))
\to V_{{(-q)}^{2b}}\tens V(\pi^{(2)}(\ell-1, (-q)^{a+b-1}))\\[2ex]
\hs{10ex}\To[\;h\;]V(\pi^{(2)}(\ell-1, (-q)^{a+b-1}))\tens
V_{(-q)^{2b} }\to  V(\pi^{(2)}(\ell, (-q)^{a+b}))\to0\ea \eneq such
that $h$ is non-zero. Here $ V(\pi^{(2)}(N, (-q)^{a+b}))$ is
understood to be the trivial representation. Since $2b > a+b-1$,
\cite[Theorem 2.2.1]{KKK13A} implies that the module
$V_{(-q)^{2b}}\tens V(\pi^{(2)}(\ell-1, (-q)^{a+b-1}))$ is generated
by the tensor product
 of dominant extremal weight vectors.
Hence we obtain
$$\Hom_{\UAtwo} ( V_{(-q)^{2b}}\tens V(\pi^{(2)}(\ell-1, (-q)^{a+b-1})),
V(\pi^{(2)}(\ell-1, (-q)^{a+b-1}))\tens V_{(-q)^{2b}})=\cor h.$$
Thus $\F^{(2)}(\rmat{(b),(a,b-1)})$  is equal to $h$ up to a
constant multiple and hence $\F^{(2)}(L(a,b))$ is isomorphic to $
V(\pi^{(2)}(\ell, (-q)^{a+b}))$. Thus we have proved the proposition
when $ \ell  \le N$.

Now assume that $\ell=N+1$. Then
$\F^{(2)}(L(a,b-1))\simeq\F^{(2)}(L(a-1,b))\simeq \cor$. Applying
$\F^{(2)}$ to  the  epimorphism $L(a,b-1)\conv L(b)\epito L(a,b)$,
$\F(L(a,b))$ is a quotient of $V_{(-q)^{2b}}$. Similarly, applying
$\F^{(2)}$ to   the   epimorphism $L(a)\conv L(a+1,b)\epito L(a,b)$,
$\F(L(a,b))$ is a quotient of $V_{(-q)^{2a}}$. Since $V_{(-q)^{2b}}$
and $V_{(-q)^{2a}}$ are simple modules and they are not isomorphic
to each other, we conclude that $\F^{(2)}(L(a,b))$ vanishes.

For $\ell>N+1$, $\F^{(2)}(L(a,b))$ vanishes since it is a quotient
of
$$\F^{(2)}(L(a,a+N))\tens\F^{(2)}(L(a+N+1,b))\simeq 0 \tens\F^{(2)}(L(a+N+1,b))\simeq0.$$
\end{proof}

\begin{corollary} \label{cor:surjhoms}
If one of conditions {\rm(i)}, {\rm(ii)} in {\rm Proposition
\ref{prop:CP}} holds, then we have \eqn
\Hom_{\UAtwo}\big(V(\pi^{(2)}(i,x)) \otimes V(\pi^{(2)}(j,y)),
V(\pi^{(2)}(k,z)) \big) \neq 0. \eneqn
\end{corollary}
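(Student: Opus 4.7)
My plan is to realize the sought homomorphism as the image under the exact tensor functor $\F^{(2)}$ of a surjection $L(a_1,b_1)\conv L(a_2,b_2)\twoheadrightarrow L(a_1,b_2)$ produced by Proposition~\ref{prop:exact sequences}(vi). Concretely, when two segments satisfy the abutment condition $b_1+1=a_2$, reading off the cokernel of the map $g$ in Proposition~\ref{prop:exact sequences}(vi) yields such a surjection (up to grade shift). Since $\F^{(2)}$ is an exact tensor functor (Theorems~\ref{thm:exact} and~\ref{thm:conv to tensor}) and since $\F^{(2)}(L(a_1,b_2))$ is a nonzero fundamental representation by Proposition~\ref{prop:image of fund. repns.} as long as the length of $(a_1,b_2)$ lies in $\{1,\dots,N-1\}$, applying $\F^{(2)}$ gives a nonzero surjection of tensor products of fundamental representations in the order $V(\pi^{(2)}(i,x))\otimes V(\pi^{(2)}(j,y))\twoheadrightarrow V(\pi^{(2)}(k,z))$. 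The task therefore reduces to exhibiting, in each case, segments whose images under $\F^{(2)}$ are precisely the three fundamental representations in play.

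For condition~(i), where $k=i+j<N$, I first note that hom-spaces are invariant under the simultaneous spectral rescaling $(x,y,z)\mapsto(\mu x,\mu y,\mu z)$, so I may assume $z=(-q)^s$ for an integer $s\equiv i+j+1\pmod 2$; then $x=(-q)^{s-j}$ and $y=(-q)^{s+i}$, and the parities of $s-j,\ s+i,\ s$ are exactly those required by Proposition~\ref{prop:image of fund. repns.}. Choosing $(a_1,b_1)$ of length $i$ with $a_1+b_1=s-j$ and $(a_2,b_2)$ of length $j$ with $a_2+b_2=s+i$, a direct computation gives $b_1+1=a_2$ and $a_1+b_2=s$, so Proposition~\ref{prop:exact sequences}(vi) supplies the surjection and $\F^{(2)}$ finishes the proof of this case.

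For condition~(ii), where $k=i+j-N$, the naive choice of segments of lengths $i$ and $j$ fails because their concatenation has length $i+j>N$ and is annihilated by $\F^{(2)}$. The remedy is to use \emph{complementary} segments of lengths $N-i$ and $N-j$, whose concatenation has length $(N-i)+(N-j)=N-k<N$. The key identity is $V(\pi^{(2)}(\ell,w))=V(\pi^{(2)}(N-\ell,(-1)^{N-1}w))$, immediate from the definition~\eqref{eq: pi2 map} of $\pi^{(2)}$; this lets me realize each of the three fundamental representations as the $\F^{(2)}$-image of a segment of complementary length. Rescaling to set $z=(-1)^{N-1}(-q)^s$ for a suitable $s\equiv i+j+1\pmod 2$ yields $x=(-1)^{N-1}(-q)^{s-N+j}$ and $y=(-1)^{N-1}(-q)^{s+N-i}$, and the segments of lengths $N-i,\,N-j$ with the corresponding sums once more satisfy $b_1+1=a_2$; the rest of the argument is identical to case~(i).

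The main obstacle I expect is the combinatorial setup of condition~(ii): one must recognize that the right construction uses complementary segments, track the sign twist $(-1)^{N-1}$ correctly, and simultaneously verify the abutment condition together with the parities of the three spectral exponents. Once the correct segments are in place, the proof is a formal application of Propositions~\ref{prop:exact sequences}(vi) and~\ref{prop:image of fund. repns.} together with the exactness of $\F^{(2)}$.
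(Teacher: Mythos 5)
Your argument is correct, and for condition (i) it is essentially the paper's proof: the paper normalizes the spectral parameters to $z=(-q)^{-i+j+1}$ and takes the abutting segments $(1-i,0)$ and $(1,j)$, which is exactly your construction with $s=-i+j+1$, followed by the same application of Proposition~\ref{prop:exact sequences}~(vi), Proposition~\ref{prop:image of fund. repns.} and the exactness of $\F^{(2)}$. For condition (ii) you take a genuinely different route. The paper keeps segments of lengths $i$ and $j$, namely $(1,i)$ and $(N-j+1,N)$; since $i+j>N$ these segments overlap rather than abut, so the paper invokes the overlapping-segment exact sequence of Proposition~\ref{prop:exact sequences}~(v) and obtains a surjection onto $\F^{(2)}\bl L(1,N)\conv L(N-j+1,i)\br\simeq \cor\tens V(\pi^{(2)}(k,z))$, using that the length-$N$ segment is sent to the trivial representation. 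You instead pass to the complementary lengths $N-i$ and $N-j$, which lets you treat both cases uniformly via the abutting exact sequence (vi); the price is the identification $V(\pi^{(2)}(\ell,w))\simeq V(\pi^{(2)}(N-\ell,(-1)^{N-1}w))$. That identification is not \emph{purely} a consequence of the formula \eqref{eq: pi2 map}: in the boundary case where $N$ is even and $\ell=N/2$ one also needs $V(\varpi_{N/2})_w\simeq V(\varpi_{N/2})_{-w}$, which the paper records in the remark following \eqref{eq:S0A^(2)_2n}, so this is a gloss rather than a gap. Your parity and abutment computations check out in both cases, and the initial rescaling step is the same implicit reduction the paper makes. (The paper's proof also begins by observing that $V(\pi^{(2)}(i,x))\tens V(\pi^{(2)}(j,y))$ is not isomorphic to the reversed tensor product, but that observation is not needed for the nonvanishing of the Hom space, so its absence from your write-up is harmless.)
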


\begin{proof} Note that in the both cases, $(i,x)$ and $(j,y)$  are linked by an arrow
in $\mathscr S(A^{(2)}_{N-1})$. Thus we have \eq \label{eq:non-isom}
V(\pi^{(2)}(i,x)) \otimes V(\pi^{(2)}(j,y)) \ncong V(\pi^{(2)}(j,y))
\otimes V(\pi^{(2)}(i,x)). \eneq Indeed, if these tensor products
were isomorphic, then by \cite[Corollary 3.9]{KKKO14} they would be
simple, which is a contradiction.

Assume that  condition (i) holds. We may assume that $x=(-q)^{1-i}$,
$y=(-q)^{j+1}$ and $z=(-q)^{-i+j+1}$. Take  the segments $(a,
b)\seteq(1, j)$ and $(a',b')=(1-i,0)$. Applying $\F^{(2)}$ to the
exact sequence in Proposition~\ref{prop:exact sequences} (vi), we
obtain a surjective homomorphism \eqn && V(\pi^{(2)}(i,
(-q)^{a'+b'}))\tens V(\pi^{(2)}(j, (-q)^{a+b})) \epito
V(\pi^{(2)}(i+j, (-q)^{a'+b})). \eneqn

Assume that condition (ii) holds. We may assume that $x=(-q)^{i+1}$,
$y=(-q)^{2N-j+1}$ and $z=(-q)^{N+i-j+1}$. Take  the segments $(a,
b)\seteq(N-j+1, N)$ and $(a',b')=(1,i)$. Applying $\F^{(2)}$ to the
exact sequence in Proposition~\ref{prop:exact sequences} (v),  we
obtain a surjective homomorphism \eqn &&
 V(\pi^{(2)}(i, (-q)^{a'+b'}))\tens V(\pi^{(2)}(j, (-q)^{a+b}))\\
&&\hs{10ex}\epito  V(\pi^{(2)}(N, (-q)^{a'+b}))\tens
V(\pi^{(2)}(i+j-N, (-q)^{a+b'})) . \eneqn Since $V(\pi^{(2)}(N,
(-q)^{a'+b}))$ is isomorphic to the trivial representation, we
obtain the desired result.
\end{proof}

\begin{lemma}\label{lem:intertwiners}
Assume that two segments $(a,b)$ and $(a',b')$ satisfy $(a,b)\ge
(a',b')$. Set $\ell=b-a+1$, $\ell'=b'-a'+1$, $x=(-q)^{a+b}$ and
$x'=(-q)^{a'+b'}$. Then the following statements hold. \bnum
\item
$x'/{x}$ is not a zero of the denominator
$d_{V^{(2)}(\varpi_\ell),V^{(2)}(\varpi_{\ell'})}(z'/z)$ of the
normalized $R$-matrix
$\Rnorm_{V^{(2)}(\varpi_\ell),V^{(2)}(\varpi_{\ell'})}(z,z')$.
\item
 The  homomorphism
  $$\F^{(2)}(\rmat{L(a,b),L(a',b')}) \col
 V(\pi^{(2)}(\ell,x)) \otimes V(\pi^{(2)}(\ell',x')) \rightarrow
V(\pi^{(2)}(\ell',x')) \otimes V(\pi^{(2)}(\ell,x)) $$
   is a non-zero constant multiple of the normalized $R$-matrix
  $\Rnorm_{V(\varpi_{\ell}), V(\varpi_{\ell'})} (x,x')$.
\ee
\end{lemma}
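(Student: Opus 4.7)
\noindent\emph{Proof plan.}

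\textbf{Part (i).} I would verify the non-vanishing of $d_{V^{(2)}(\varpi_\ell),V^{(2)}(\varpi_{\ell'})}(x'/x)$ by direct inspection of the denominator formula \eqref{eq:denom A^(2)_N-1}. Writing $p = a'+b'-a-b$, so that $x'/x = (-q)^p$, the ordering hypothesis $(a,b)\ge(a',b')$ forces $a+b\ge a'+b'$, hence $p\le 0$. The first family of zeros $(-q)^{|\ell-\ell'|+2s}$ has exponent $\ge 2$, so cannot match $p$. For the second family, I rewrite $-q^N(-q)^{-\ell-\ell'+2s}=(-1)^{N+1}(-q)^{N-\ell-\ell'+2s}$: when $N$ is even, the sign $-1$ is not a power of $-q$, so no match is possible; when $N$ is odd, the bound $\ell,\ell'\le (N-1)/2$ (otherwise $V^{(2)}(\varpi_\ell)$ or $V^{(2)}(\varpi_{\ell'})$ is trivial and there is nothing to check) forces $N-\ell-\ell'+2s\ge 3$, contradicting $p\le 0$.

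\textbf{Part (ii).} The idea is to factor the R-matrix through atomic pieces. First, by Proposition \ref{prop:multisegments}(ii), $L(a,b)$ is realized as the image of $\rmat{L(a),L(a+1),\ldots,L(b)}$ inside $L(a)\conv L(a+1)\conv\cdots\conv L(b)$, and similarly for $L(a',b')$. Using the Yang-Baxter property \cite[\S1.4]{KKK13A}, the R-matrix $\rmat{L(a,b),L(a',b')}$ can be identified (up to a scalar from the grading shifts) with the restriction of the big ordered R-matrix
\[
\rmat{L(a),\ldots,L(b),L(a'),\ldots,L(b')}^{\,w},
\]
where $w\in\sym_{\ell+\ell'}$ swaps the first $\ell$ letters past the last $\ell'$. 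Applying the exact functor $\F^{(2)}$ and using Proposition \ref{prop:image of tau}(ii) at each atomic crossing, this composite becomes a product of normalized R-matrices $\Rnorm_{V,V}$ between copies of $V=V^{(2)}(\varpi_1)$ at spectral parameters $(-q)^{2i}$, multiplied by the product of the spectral factors $(X_i/X(i)-X_j/X(j))^{d_{ij}}c_{ij}(\cdots)$ specialized at $X_i=X(i)$, $X_j=X(j)$.

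Next, I would use the compatibility of normalized R-matrices with fusion: the composite of atomic $\Rnorm_{V,V}$'s, restricted to the subquotients $V^{(2)}(\varpi_\ell)_x\hookrightarrow V_{(-q)^{2a}}\otimes\cdots\otimes V_{(-q)^{2b}}$ and projected onto $V^{(2)}(\varpi_{\ell'})_{x'}$, coincides (up to a nonzero scalar) with $\Rnorm_{V(\varpi_\ell),V(\varpi_{\ell'})}(x,x')$; this uses the explicit fusion embeddings/projections $\iota_{i,j},p_{i,j}$ from Proposition \ref{prop:Oh14} together with the uniqueness of the R-matrix characterized by its action on dominant extremal vectors. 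Finally, the overall constant is the product of the spectral factors, each of the form $(1-(-q)^{2(a'+b'-a-b)-\cdots})^{d_{ij}}c_{ij}(\ldots)$ evaluated at the specialization, and its non-vanishing is precisely ensured by part (i).

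\textbf{Main obstacle.} The delicate step is the fusion identification in the middle of part (ii): one must check that, after restriction to $V^{(2)}(\varpi_\ell)$ and $V^{(2)}(\varpi_{\ell'})$, the composition of atomic normalized R-matrices specializes cleanly (no indeterminate factors) to $\Rnorm_{V(\varpi_\ell),V(\varpi_{\ell'})}$, with a scalar whose non-vanishing we can pin down. An alternative, perhaps cleaner, route is to leverage part (i) first: since the denominator does not vanish at $x'/x$, the tensor product $V(\pi^{(2)}(\ell,x))\otimes V(\pi^{(2)}(\ell',x'))$ is simple and cyclic on $v\otimes v'$, so $\Hom$ into the swapped tensor product is at most one-dimensional; then $\F^{(2)}(\rmat{L(a,b),L(a',b')})$ and $\Rnorm$ necessarily agree up to scalar, and one only needs to verify that $\F^{(2)}(\rmat{L(a,b),L(a',b')})$ is nonzero, which can be tested on the image of the dominant extremal vector using the atomic formula of Proposition \ref{prop:image of tau}(ii) together with the non-vanishing from part (i).
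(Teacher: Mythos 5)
Your whole argument rests on the claim that $(a,b)\ge(a',b')$ forces $a+b\ge a'+b'$, i.e.\ $p\seteq (a'+b')-(a+b)\le 0$. This is false: the order on segments is lexicographic in $(a,b)$, so for instance $(1,1)>(0,5)$ while $a+b=2<5=a'+b'$, giving $p=3>0$. What the hypothesis actually yields is the weaker inequality $\ell'-\ell=(b'-a')-(b-a)\ge (a'+b')-(a+b)$ (since $a\ge a'$), and this is exactly what the paper exploits: a zero of the $A^{(1)}$ denominator \eqref{eq:denom A^(1)_n} at $x'/x=(-q)^{p}$ would force $p=|\ell-\ell'|+2s\ge \ell'-\ell+2$, a contradiction; one then transports the conclusion to the twisted side via the quiver isomorphism of Proposition \ref{prop:quiver isom}. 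Your direct case analysis of the second family of zeros of \eqref{eq:denom A^(2)_N-1} is fine as far as it goes, but it cannot be salvaged without replacing ``$p\le0$'' by the correct inequality, and it also silently ignores that for $\ell>\lfloor N/2\rfloor$ the relevant vertex is $\pi^{(2)}(\ell,x)=(N-\ell,(-1)^{N-1}x)$, which the quiver-isomorphism route handles automatically.

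\textbf{Part (ii) has the right reduction but the decisive step is missing.} Your ``alternative route'' — cyclicity of $V(\pi^{(2)}(\ell,x))\otimes V(\pi^{(2)}(\ell',x'))$ on the product of dominant extremal vectors (a consequence of (i) and \cite[Theorem 2.2.1]{KKK13A}), hence one-dimensionality of the relevant Hom space, hence reduction to the non-vanishing of $\F^{(2)}(\rmat{L(a,b),L(a',b')})$ — is precisely the paper's first step. (Note, however, that the tensor product is cyclic but in general \emph{not} simple; non-simplicity is in fact used below.) The problem is that you never actually establish the non-vanishing: both the atomic factorization/fusion argument and the ``test on the dominant extremal vector'' proposal founder on the point you yourself flag, namely that $\rmat{L(a,b),L(a',b')}$ is the \emph{renormalized} R-matrix (divided by $(z'-z)^{s}$ before specialization), so Proposition \ref{prop:image of tau}(ii) does not compute its image under $\F^{(2)}$. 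The paper circumvents this entirely: when $r$ is not an isomorphism one has $a'<a\le b'<b$ or $a=b'+1$, and applying the exact functor $\F^{(2)}$ to the four-term exact sequences of Proposition \ref{prop:exact sequences}(v),(vi) exhibits $\F^{(2)}(r)$ as the middle map of a four-term exact sequence whose outer terms $V(\pi^{(2)}(\ell_1,(-q)^{a'+b}))\otimes V(\pi^{(2)}(\ell_2,(-q)^{a+b'}))$ are \emph{simple} (unlinked vertices) while the source $V(\pi^{(2)}(\ell,x))\otimes V(\pi^{(2)}(\ell',x'))$ is \emph{not} simple (linked vertices, via the quiver isomorphism again); hence the kernel of $\F^{(2)}(r)$ is a proper submodule and $\F^{(2)}(r)\neq0$. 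You should adopt this exact-sequence argument (or supply a complete fusion computation) to close the gap.
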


\begin{proof}
Because $(b'-a'+1)-(b-a+1)\ge (a'+b')-(a+b)$, there is no arrow in
$\mathscr S(A^{(1)}_{N-1})$ from $(\ell,x)$ to $(\ell', x')$, and
hence so is for $\pi^{(2)}(\ell,x)$ and $\pi^{(2)}(\ell', x')$  in
$\mathscr S(A^{(2)}_{N-1})$,  by Proposition \ref{prop:quiver isom}.
Thus we have (i).

By (i) and  \cite[Theorem 2.2.1]{KKK13A}, the module
$V(\pi^{(2)}(\ell,x)) \otimes V(\pi^{(2)}(\ell',x'))$ is generated
by the tensor product $v_\ell\tens v_{\ell'}$ of dominant extremal
weight vectors. Thus any non-zero homomorphism from
 $ V(\pi^{(2)}(\ell,x)) \otimes V(\pi^{(2)}(\ell',x'))$ to
  $V(\pi^{(2)}(\ell',x')) \otimes V(\pi^{(2)}(\ell,x)) $
is  a constant multiple of the normalized $R$-matrix.
 Hence it is enough to show that
$\F^{(2)}(\rmat{L(a,b),L(a',b')})$ does not vanish.

We may therefore assume that $r\seteq\rmat{L(a,b),L(a',b')}$ is not
an isomorphism. Equivalently, $(\ell,x)$ and $(\ell',x')$ are linked
by an arrow in $\mathscr S_0(A^{(1)}_{N-1})$. Since $\mathscr
S_0(A^{(2)}_{N-1})$ is
 isomorphic to $\mathscr S_0(A^{(1)}_{N-1})$ as a quiver,
$\pi^{(2)}(\ell,x)$ and $\pi^{(2)}(\ell',x')$ are linked. Hence,
$V(\pi^{(2)}(\ell,x)) \otimes V(\pi^{(2)}(\ell',x'))$ is not simple.

On the other hand, since $(\ell,x)$ and $(\ell',x')$ are linked, we
have $a'<a\le b'<b$   or $a=b'+1$. Applying $\F^{(2)}$ to the exact
sequences (v) or (vi) in Proposition~\ref{prop:exact sequences}, we
obtain an exact sequence: \eqn &&\hs{-3ex}  0 \rightarrow
V(\pi^{(2)}(\ell_1,(-q)^{a'+b})) \otimes
V(\pi^{(2)}(\ell_2,(-q)^{a+b'}))
  \rightarrow V(\pi^{(2)}(\ell,x))
\otimes V(\pi^{(2)}(\ell',x')) \\
&&\hs{-3ex} \To[{\;\F^{(2)}(r)\;}]
  V(\pi^{(2)}(\ell',x')) \otimes V(\pi^{(2)}(\ell,x)) \rightarrow
  V(\pi^{(2)}(\ell_1,(-q)^{a'+b})) \otimes V(\pi^{(2)}(\ell_2,(-q)^{a+b'})) \rightarrow 0,
\eneqn where  $\ell_1 = b-a' +1$ and $\ell_2 = b'-a +1$.

Since $(\ell_1,(-q)^{a'+b})$ and $(\ell_2,(-q)^{a+b'})$ are not
linked in $\mathscr S_0(A^{(1)}_{N-1})$,
$\pi^{(2)}(\ell_1,(-q)^{a'+b})$ and $\pi^{(2)}(\ell_2,(-q)^{a+b'})$
are not linked in $\mathscr S_0(A^{(2)}_{N-1})$, either. It follows
that
$$V(\pi^{(2)}(\ell_1,(-q)^{a'+b})) \otimes V(\pi^{(2)}(\ell_2,(-q)^{a+b'}))
  \ncong V(\pi^{(2)}(\ell,x)) \otimes V(\pi^{(2)}(\ell',x')),$$
because the left hand side is simple but the right hand side is not.
Therefore $\F^{(2)}(r)$ does not vanish, as desired.
\end{proof}

\begin{theorem} \label{thm:irreducible to irreducible}
Let $M$ be a finite-dimensional simple  graded  $R(\ell)$-module and
$$\big ((a_1,b_1), \ldots , (a_r,b_r) \big)$$
be the multisegment associated with $M$. Set $\ell_k=b_k-a_k+1$.
\bnum
\item If $\ell_k > N$ for some $1 \leq k \leq r$, then $\F^{(2)}(M) \simeq 0$.
\item If $\ell_k \leq N$ for all $1 \leq k \leq r$, then $\F^{(2)}(M)$ is simple.
\ee
\end{theorem}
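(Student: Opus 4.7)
The plan is to use the description of $M$ as the image of a composed $R$-matrix provided by Proposition~\ref{prop:multisegments}~(ii), and transport it through the exact tensor functor $\F^{(2)}$. Up to a grade shift one has $M \simeq \Im\bl \rmat{L(a_1,b_1),\ldots,L(a_r,b_r)} \br$, so by exactness (Theorem~\ref{thm:exact}),
$$\F^{(2)}(M) \simeq \Im\bl\F^{(2)}(\rmat{L(a_1,b_1),\ldots,L(a_r,b_r)})\br,$$
while Theorem~\ref{thm:conv to tensor} identifies the source and target of this morphism with $V_1 \otimes \cdots \otimes V_r$ and $V_r \otimes \cdots \otimes V_1$, where $V_i \seteq V(\pi^{(2)}(\ell_i, x_i))$ and $x_i \seteq (-q)^{a_i+b_i}$.

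For part (i), if some $\ell_k > N$ then Proposition~\ref{prop:image of fund. repns.} gives $\F^{(2)}(L(a_k, b_k)) = 0$; hence by Theorem~\ref{thm:conv to tensor} the tensor product $V_1 \otimes \cdots \otimes V_r$ vanishes. Since $M$ is a quotient of the convolution $L(a_1,b_1) \conv \cdots \conv L(a_r,b_r)$ by Proposition~\ref{prop:multisegments}~(i)(c) and $\F^{(2)}$ is right-exact, we conclude $\F^{(2)}(M) = 0$.

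For part (ii), the inductive construction of $\rmat{L(a_1,b_1),\ldots,L(a_r,b_r)}$ as a composition of pairwise intertwiners $\rmat{L(a_i,b_i),L(a_j,b_j)}$ along a reduced word for the longest element of $\sym_r$, combined with Lemma~\ref{lem:intertwiners}~(ii), shows that $\F^{(2)}(\rmat{L(a_1,b_1),\ldots,L(a_r,b_r)})$ is a non-zero scalar multiple of the composite normalized $R$-matrix $\Rnorm \col V_1 \otimes \cdots \otimes V_r \to V_r \otimes \cdots \otimes V_1$. Lemma~\ref{lem:intertwiners}~(i) further guarantees that no pairwise denominator $d_{V(\varpi_{\ell_i}),V(\varpi_{\ell_j})}(x_j/x_i)$ vanishes for $i<j$, so \cite[Theorem~2.2.1]{KKK13A} applies and $V_1 \otimes \cdots \otimes V_r$ is generated by the tensor product $v_1 \otimes \cdots \otimes v_r$ of dominant extremal weight vectors. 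The image of $\Rnorm$ is therefore cyclic at $v_r \otimes \cdots \otimes v_1$. I would then conclude simplicity from the standard fact that, under these circumstances, the image of the composite normalized $R$-matrix coincides with the simple head of $V_1 \otimes \cdots \otimes V_r$ (and simultaneously with the simple socle of $V_r \otimes \cdots \otimes V_1$).

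The main obstacle is justifying this last step in the twisted setting: while the image is visibly cyclic and non-zero, proving it is simple requires an auxiliary argument identifying it with a unique simple head. One natural way is to show that $v_1 \otimes \cdots \otimes v_r$ also cogenerates $V_1 \otimes \cdots \otimes V_r$, for instance by running the dual $R$-matrix in the reverse direction and invoking the fact that the reversed pairwise denominators likewise have no unwanted zeros (which again follows from Lemma~\ref{lem:intertwiners}~(i) applied to the reversed segment order). This symmetry, together with the uniform treatment of good modules over arbitrary quantum affine algebras in \cite{Kas02}, yields a unique simple head and simple socle which must coincide with $\Im(\Rnorm)$, completing the proof in the $A^{(2)}_{N-1}$ case.
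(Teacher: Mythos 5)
Your proposal is correct and takes essentially the same route as the paper: part (i) via Proposition~\ref{prop:image of fund. repns.} together with the tensor-functor property, and part (ii) by writing $M$ (up to grading shift) as $\Im\,\rmat{L(a_1,b_1),\ldots,L(a_r,b_r)}$, commuting $\F^{(2)}$ with images by exactness, identifying $\F^{(2)}(\rmat{L(a_1,b_1),\ldots,L(a_r,b_r)})$ with a nonzero multiple of a composition of normalized $R$-matrices via Lemma~\ref{lem:intertwiners}, and concluding that its image is simple. The final step you flag as the "main obstacle" is exactly the step the paper also treats as known --- the head/socle statement for ordered tensor products of good modules whose pairwise normalized $R$-matrices have no pole at the given spectral parameters, which holds for arbitrary quantum affine algebras by \cite{AK,Kas02} --- so your sketched cogeneration argument is the standard proof of that cited fact rather than a genuinely new ingredient.
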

\begin{proof} (i) follows from Proposition \ref{prop:image of fund.
repns.}.

By Proposition \ref{prop:multisegments},  we have $M \cong \Im
\rmat{(a_1,b_1),  \ldots , (a_r,b_r)}$. On the other hand, if
$\ell_k \leq N$ for all $1 \leq k \leq r$, then by the above Lemma,
we know that $\F^{(2)}(r_{(a_1,b_1),  \ldots , (a_r,b_r)})$
 is a constant multiple of a composition of normalized $R$-matrices.
 It follows that
 $\Im \F^{(2)}(\rmat{(a_1,b_1),  \ldots , (a_r,b_r)})$ is simple.
 Hence we conclude that
 $$\F^{(2)}(M) \cong \F^{(2)}(\Im \rmat{(a_1,b_1),  \ldots , (a_r,b_r)}) \cong \Im \F^{(2)}(r_{(a_1,b_1),  \ldots , (a_r,b_r)}) $$ is simple, as desired.
\end{proof}

 \subsection{Quotient of  the category $R\gmod$}
 We will recall the quotient category of $R\gmod$ introduced in \cite[\S 4.4]{KKK13A}.
Set $\As_{\alpha} = R({\alpha}) \gmod$ and set $\As =
\soplus_{{\alpha} \in \rootl_J^+} \As_{\alpha}$. Similarly, we
define $\Ab_\alpha$ and $\Ab$ by $\Ab_\alpha=\Modg(R(\alpha))$ and
$\Ab=\soplus_{{\alpha} \in \rootl_J^+}\Ab_\alpha$. Then we have a
functor $\F^{(2)} =\soplus_{{\alpha} \in \rootl_J^+}
\F^{(2)}_\alpha\col \Ab \rightarrow \Mod(\UAtwo)$, where
$\F^{(2)}_{\alpha}$ is the functor  given in \eqref{eq:the functor}.

Let $\Ss_N$ be the smallest Serre subcategory of $\As$ (see
\cite[Appendix B.1]{KKK13A} ) such that \eq&&
\parbox{70ex}{
\begin{enumerate}
\item $\Ss_N$ contains $L(a, a+N)$ for any $a\in\Z$,
\item $X \conv Y, \ Y \conv X \in \Ss_N$
for all $X \in \As$ and $Y \in \Ss_N$.
\end{enumerate}}
\eneq Note that $\Ss_N$ contains $L(a,b)$ if $b\ge a+N$.

Let us denote by $\As/ \Ss_N$ the quotient category of $\As$ by
$\Ss_N$ and denote by $\mathcal Q\col \As \rightarrow \As/ \Ss_N$
the canonical functor. Since $\F^{(2)}$ sends $\Ss_N$ to $0$, the
functor $\F^{(2)} \col \As \rightarrow \UAtwo \smod$ factors through
$\mathcal Q$ by \cite[Theorem B.1.1 (v)]{KKK13A}: \eqn&&
\xymatrix@C=6ex@R=4ex{\As\ar[r]^{\mathcal{Q}}\ar[dr]_-{\F^{(2)}}
&\As/ \Ss_N\ar[d]^-{\F^{(2)'}}\\
&{\UAtwo\smod} } \eneqn
 That is, there exists a unique functor
$\F^{(2)'}\col \As/\Ss_N\to \UAtwo \smod$  up to an isomorphism such
that  the above diagram quasi-commutes.

\smallskip

Note that $\As$ and $\As/\Ss_N$ are tensor categories with the
convolution as tensor products. The module $R(0)\simeq \cor$ is a
unit object. Note also that $Q\seteq q R(0)$ is an invertible
central object of $\As/\Ss_N$ and $X\mapsto  Q\conv X\simeq X\conv
Q$ coincides with the grade shift functor.
 Moreover, the
functors $\mathcal Q$, $\F^{(2)}$ and $\F^{(2)'}$ are tensor
functors.

\smallskip

Similarly, we define $\Sb_N$ as the smallest Serre subcategory of
$\Ab$ such that \eq&&
\parbox{70ex}{
\begin{enumerate}
\item $\Sb_N$ contains $L(a, a+N)$,
\item $X \conv Y, \ Y \conv X \in \Sb_N$
for all $X \in \Ab$, $Y \in \Sb_N$,
\item $\Sb_N$ is stable under (not necessarily finite) direct sums.
\end{enumerate}}
\eneq Then we can easily see that $\Sb_N\cap \As=\Ss_N$ and hence
the functor $\As/ \Ss_N \to\Ab/\Sb_N$ is fully faithful.

The following proposition is proved  in \cite[\S 4.4]{KKK13A}, and
its corollary below can be proved similarly to \cite[Corollary
4.4.2]{KKK13A}.
\begin{prop} [{\cite[Proposition 4.4.1]{KKK13A}}]\hfill
\bnum
\item If an object $X$ is simple in $\As / \Ss_N$, then
there exists a  simple object  $M$ in $\As$ satisfying

\bna
\item $\mathcal Q(M) \simeq X$,
\item $b_k-a_k+1 \leq N$ for $1 \leq k \leq r$, where
$\big ( (a_1,b_1), \ldots , (a_r,b_r) \big)$ is the multisegment
associated with $M$. \ee
\item Let $\big ( (a_1,b_1), \ldots , (a_r,b_r) \big)$  be the multisegment associated with a simple  object  $M$ in $\As$.
If $b_k-a_k+1 \leq N$ for $1 \leq k \leq r$, then $\mathcal Q(M)$ is
simple in $\As / \Ss_N$. \ee
\end{prop}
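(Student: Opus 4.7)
The plan is to treat the two parts separately, using standard facts about Serre quotients together with the explicit structure of $\As$ coming from Proposition~\ref{prop:multisegments} and Proposition~\ref{prop:exact sequences}, and using the functor $\F^{(2)}$ as a detection tool.

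For part~(i), I would begin with the general principle that a nonzero simple object $X$ in the Serre quotient $\As/\Ss_N$ can be lifted to a simple object $M\in\As$ with $\mathcal Q(M)\simeq X$, and such an $M$ is characterized by the property that $M\notin\Ss_N$. By Proposition~\ref{prop:multisegments}, write $M\simeq q^c\hd(L(a_1,b_1)\conv\cdots\conv L(a_r,b_r))$ for a unique ordered multisegment. The claim is that each segment must satisfy $b_k-a_k+1\le N$, and I would prove this by contradiction: assuming some segment has length $>N$, one shows that $L(a_k,b_k)\in\Ss_N$, which by the convolution-stability axiom forces the whole convolution, and hence its head $M$, into $\Ss_N$. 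This contradicts $M\notin\Ss_N$.

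The key subsidiary step---which is the main technical obstacle---is the fact stated in the text that $L(a,b)\in\Ss_N$ whenever $b\ge a+N$. I would establish this by induction on $b-a$. The base case $b=a+N$ is built into the definition of $\Ss_N$. For the inductive step $b\ge a+N+1$, apply Proposition~\ref{prop:exact sequences}(vi) with the segments $(a+N+1,b)$ and $(a,a+N)$ (which satisfy the hypothesis $a'=a+N+1=b'+1$ of that item). The resulting four-term exact sequence exhibits a grade shift of $L(a,b)$ both as a subobject and as a quotient of modules appearing in the middle, and those middle terms are convolutions involving $L(a,a+N)\in\Ss_N$, hence lie in $\Ss_N$. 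Since $\Ss_N$ is a Serre subcategory, $L(a,b)$ belongs to $\Ss_N$, completing the induction.

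For part~(ii), the strategy is to detect non-vanishing in the quotient via the exact tensor functor $\F^{(2)}$. Assume $M$ is simple with associated multisegment $\big((a_1,b_1),\ldots,(a_r,b_r)\big)$ and $b_k-a_k+1\le N$ for all $k$. By Theorem~\ref{thm:irreducible to irreducible}(ii), $\F^{(2)}(M)$ is a simple $\uqpg$-module, hence nonzero. Since $\F^{(2)}$ factors as $\F^{(2)\prime}\circ\mathcal Q$, we conclude $\mathcal Q(M)\neq0$ in $\As/\Ss_N$. Now invoke the general fact that the exact localization functor $\mathcal Q$ sends a simple object either to zero or to a simple object; combining with $\mathcal Q(M)\neq 0$ yields the simplicity of $\mathcal Q(M)$. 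I expect the only real obstacle is the inductive lemma on $L(a,b)\in\Ss_N$ inside part~(i); everything else is a formal application of Serre quotient yoga together with results already established in the preceding subsections.
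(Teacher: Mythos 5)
Your plan is sound, and since the paper does not actually prove this proposition---it only cites \cite{KKK13A}---your reconstruction amounts to a legitimate self-contained argument from results already available in the present paper. Part (i) is the standard Serre-quotient yoga (lift $X$ to a composition factor $M$ of any preimage, so that $M$ is simple with $\mathcal Q(M)\simeq X$ and $M\notin\Ss_N$) combined with the observation that $L(a,b)\in\Ss_N$ whenever $b\ge a+N$; part (ii) detects $\mathcal Q(M)\ne0$ through the factorization $\F^{(2)}=\F^{(2)'}\circ\mathcal Q$ together with Theorem~\ref{thm:irreducible to irreducible}(ii), and then uses that the image of a simple object under a quotient functor is zero or simple. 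Three small remarks. First, your ``induction'' in the sub-lemma is vacuous: for $b\ge a+N+1$ the four-term exact sequence of Proposition~\ref{prop:exact sequences}(vi) applied to the segments $(a+N+1,b)$ and $(a,a+N)$ already exhibits $qL(a,b)$ as a subobject of $L(a+N+1,b)\conv L(a,a+N)\in\Ss_N$, so only the base case of your induction is ever invoked and you may as well state it as a direct argument. Second, that exact sequence places $qL(a,b)$, not $L(a,b)$, in $\Ss_N$, so you should note that $\Ss_N$ is stable under grade shift; this is automatic because $q^cL(a,b)\simeq q^cR(0)\conv L(a,b)$ and $\Ss_N$ is closed under convolution with arbitrary objects of $\As$. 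Third, in part (ii) the only input you need from Theorem~\ref{thm:irreducible to irreducible} is the non-vanishing of $\F^{(2)}(M)$; using $\F^{(2)}$ as the detector is a clean shortcut compared with proving internally that $M\notin\Ss_N$, and there is no circularity since the factorization of $\F^{(2)}$ through $\mathcal Q$ is established in the text just before the proposition is stated.
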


\begin{corollary}
The functor $\F^{(2)'}\col\As/\Ss_N\to \UAtwo\smod$ sends simple
objects in $\As/\Ss_N$ to simple objects in $\UAtwo\smod$.
\end{corollary}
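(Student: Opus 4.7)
The plan is to reduce the statement to a direct application of Theorem \ref{thm:irreducible to irreducible}(ii) using the preceding Proposition, which provides a simple lift from $\As/\Ss_N$ back to $\As$ whose multisegment avoids the forbidden long segments. Since the quotient functor $\mathcal Q$ is essentially surjective on objects (indeed, $\F^{(2)}$ factors as $\F^{(2)'}\circ\mathcal Q$ up to isomorphism by the construction recalled just before the statement), every simple object of $\As/\Ss_N$ can be realized as $\mathcal Q(M)$ for a simple $M$ in $\As$; the point of the Proposition is that we may arrange the length condition simultaneously.

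More concretely, let $X$ be a simple object in $\As/\Ss_N$. By part (i) of the preceding Proposition, I pick a simple object $M\in\As$ such that $\mathcal Q(M)\simeq X$ and such that the multisegment $\bl (a_1,b_1),\ldots,(a_r,b_r)\br$ associated with $M$ satisfies $b_k-a_k+1\le N$ for every $1\le k\le r$. Using the quasi-commutative diagram
\[
\xymatrix@C=6ex@R=3.5ex{\As\ar[r]^-{\mathcal{Q}}\ar[dr]_-{\F^{(2)}}
&\As/ \Ss_N\ar[d]^-{\F^{(2)'}}\\
&{\UAtwo\smod} }
\]
I obtain $\F^{(2)'}(X)\simeq \F^{(2)'}(\mathcal Q(M))\simeq \F^{(2)}(M)$. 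Now Theorem~\ref{thm:irreducible to irreducible}(ii), applied to $M$ with the length condition $b_k-a_k+1\le N$ that was built in, guarantees that $\F^{(2)}(M)$ is a simple $\UAtwo$-module. Hence $\F^{(2)'}(X)$ is simple, as desired.

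Since the corollary is essentially a packaging statement, I do not expect any substantive obstacle: all the real work has been done in Proposition~\ref{prop:image of fund. repns.}, Lemma~\ref{lem:intertwiners} and Theorem~\ref{thm:irreducible to irreducible} (to control images of convolutions of segment modules under $\F^{(2)}$), and in the cited Proposition from \cite{KKK13A} (to produce a length-controlled simple lift). The only technical point worth checking is that the lift $M$ with all segments of length $\le N$ is indeed compatible with the quasi-commutative factorization, but this is automatic: $\F^{(2)}$ kills the Serre subcategory $\Ss_N$ by Proposition~\ref{prop:image of fund. repns.}, and the universal property of the Serre quotient \cite[Theorem B.1.1(v)]{KKK13A} produces the required isomorphism $\F^{(2)}\simeq \F^{(2)'}\circ\mathcal Q$ on objects. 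No delicate argument beyond invoking the two prior results is required.
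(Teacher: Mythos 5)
Your proof is correct and is precisely the argument the paper intends: it omits the proof, remarking only that the corollary "can be proved similarly to \cite[Corollary 4.4.2]{KKK13A}", and that argument is exactly your combination of the length-controlled simple lift from the preceding Proposition with Theorem~\ref{thm:irreducible to irreducible}(ii) via the factorization $\F^{(2)}\simeq\F^{(2)'}\circ\mathcal Q$.
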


\bigskip
\subsection{The categories $\T'_N$ and $\T_N$} \label{subsec: T_N}
In this section, we recall the categories $\T'_N$ and $\T_N$
introduced (and  denoted by  $T'_J$ and $T_J$, respectively) in
\cite[\S4.5]{KKK13A}.

\begin{definition}
Let $S$ be the automorphism of $P_J\seteq\soplus_{a\in\Z}\Z\oep_a$
given by $S(\oep_a)=\oep_{a+N}$. We define the bilinear form $B$ on
$P_J$ by \eq &&B(x,y)=-\sum_{k>0}(S^kx,y)\qtext{for $x,y\in P_J$.}
\label{def:S} \eneq
\end{definition}

\begin{definition} We define the new tensor product $\nconv\col\Ab/\Sb_N\times
\Ab/\Sb_N\to\Ab/\Sb_N$ by \eqn X\nconv Y=q^{B(\al,\beta)} X\conv
Y\simeq Q^{\tens B(\al,\beta)}\conv X\conv Y, \eneqn
 where  $X\in(\Ab/\Sb_N)_{\al},$ $Y\in(\Ab/\Sb_N)_{\beta}$ and $Q=q\,\one$.
\end{definition} Then $\Ab/\Sb_N$ as well as $\As/\Ss_N$ is endowed with a new
structure of tensor category by $\nconv$ as shown in \cite[Appendix
A.8]{KKK13A}.

Set
\begin{equation}
L_a\seteq L(a, a+N-1)  \quad \text{and} \quad
  u_a\seteq u(a,a+N-1)\in L_a
\quad\text{for} \  a \in \Z.
\end{equation}

For $a,j\in\Z$, set \eq \label{eq:faj} &&f_{a,j}(z) \seteq
(-1)^{\delta_{j,a+N}}z^{-\delta(a\le j<a+N-1)-\delta_{j,a+N}} \in
\cor[z^{\pm1}]. \eneq

\begin{theorem}[{\cite[Theorem 4.5.8]{KKK13A}}] \label{thm:L_a commutes with X}
The following statements  hold. \bnum
\item
$L_a$ is a central object   in  $\As/\Ss_N$ \ro see \cite[Appendix
A.3]{KKK13A} \rf; i.e., \bna
\item
$f_{a,j}(z)R_{L_a,L(j)_z}$ induces an isomorphism in $\As/\Ss_N$
$$R_a(X)\col L_a\nconv X\isoto X\nconv L_a$$
 functorial in  $X\in\As/\Ss_N$,
\item the diagram
$$\xymatrix@C=10ex
{L_a\nconv X\nconv Y\ar[r]^{R_a(X)\nconv Y}\ar[dr]_-{R_a(X\nconv
Y)\hs{2ex} }&
X\nconv L_a\nconv Y\ar[d]^{X\nconv R_a(Y)}\\
& X\nconv Y \nconv L_a }
$$ is commutative in $\As/\Ss_N$ for any $X,Y\in\As/\Ss_N$.
\ee
\item The  isomorphism
$R_a(L_a)\col L_a\nconv L_a\isoto L_a\nconv L_a$ coincides with
$\id_{L_a\nconv L_a}$ in $\As/\Ss_N$.
\item
For $a,b\in\Z$,
 the  isomorphisms
\eqn
 R_a(L_b)\col L_a\nconv L_b\isoto L_b\nconv L_a \ \text{and} \ R_b(L_a)\col L_b\nconv L_a\isoto
L_a\nconv L_b \eneqn

 in $\As/\Ss_N$ are  the inverses  to each other.
\ee \end{theorem}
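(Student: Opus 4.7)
The plan is to prove each of the three assertions by reducing, via exactness and naturality, to the case where the test object is a one-letter segment module $L(j)_z$, and then appealing to the explicit structure of R-matrices between segment modules given in Proposition~\ref{prop:exact sequences}. The key input throughout is that $L(a,a+N)$ lies in $\Ss_N$ for every $a\in\Z$, so that kernels and cokernels involving length-$N$ segments become zero in $\As/\Ss_N$; this is precisely what allows the raw R-matrix $R_{L_a,L(j)_z}$, which a priori is only well-defined on affinizations, to descend to an isomorphism in the quotient after specialization in $z$.

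I would begin with (i)(a) by examining $R_{L_a,L(j)_z}$ case by case in $j$. If $j<a-1$ or $j>a+N$, Proposition~\ref{prop:exact sequences}(iv) shows $L_a\conv L(j)_z$ is already simple, so $R_{L_a,L(j)_z}$ is an isomorphism at $z=0$ and indeed $f_{a,j}=1$. If $a\le j\le a+N-1$, Proposition~\ref{prop:exact sequences}(v) produces a nontrivial kernel and cokernel involving $L(a,j)\conv L(j,a+N-1)$; the $R$-matrix vanishes to first order at $z=0$, so the factor $z^{-\delta(a\le j<a+N-1)}$ is exactly what is required to divide out the vanishing and produce a bona fide isomorphism in $\As/\Ss_N$. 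The remaining boundary cases $j=a-1$ and $j=a+N$ fall under Proposition~\ref{prop:exact sequences}(vi): here the kernel and cokernel are length-$N$ segments, hence lie in $\Ss_N$, and the sign $(-1)^{\delta_{j,a+N}}$ together with $z^{-\delta_{j,a+N}}$ is precisely what makes $f_{a,j}(z)R_{L_a,L(j)_z}|_{z=0}$ an isomorphism modulo $\Ss_N$. For a general $X\in\As/\Ss_N$ one presents $X$ as a subquotient of a convolution $L(j_1)_{z_1}\conv\cdots\conv L(j_m)_{z_m}$, defines $R_a(X)$ as the composition through each factor, and checks independence of the presentation by the Yang--Baxter equation; naturality in morphisms is automatic from the compatibility of $R$ with intertwiners.

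Part (i)(b) then reduces by naturality to $X=L(i)_z$, $Y=L(j)_{z'}$, where the hexagon becomes an equality of two triple compositions of the $R$-matrices $R_{L_a,L(i)_z}$ and $R_{L_a,L(j)_{z'}}$; the Yang--Baxter equation combined with the multiplicativity $f_{a,i}(z)\cdot f_{a,j}(z')$ of the normalizing scalars gives commutativity on the nose. Part (ii) is immediate from Proposition~\ref{prop:exact sequences}(i), which yields $\rmat{L_a,L_a}=\id$; one only has to check that $\prod_{k=0}^{N-1}f_{a,a+k}(0)=1$. For (iii), the composition $R_a(L_b)\circ R_b(L_a)$ reduces to $\rmat{L_a,L_b}\circ\rmat{L_b,L_a}$ multiplied by a product of $f_{a,\bullet}$ and $f_{b,\bullet}$; the general identity $\rmat{N,M}\circ\rmat{M,N}=\lambda\cdot\id$, with $\lambda$ computed as the resultant of the denominator polynomials, cancels against the normalizing scalars in $\As/\Ss_N$. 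The main obstacle is (i)(a), specifically pinning down the exact form of $f_{a,j}(z)$: the sign $(-1)^{\delta_{j,a+N}}$ is essential to ensure that the two composition factors of $L(a,a+N)$ cancel in $\As/\Ss_N$ rather than contributing a nontrivial scalar that would obstruct the commutativity of the hexagon in (b) and the identification in (iii).
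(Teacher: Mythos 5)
The paper itself does not prove this statement: it is quoted verbatim from \cite[Theorem 4.5.8]{KKK13A} and used as a black box, so there is no internal proof to compare against. Your overall strategy --- reduce to $X=L(j)_z$ by exactness, naturality and the Yang--Baxter relation, analyse $R_{L_a,L(j)_z}$ case by case via Proposition \ref{prop:exact sequences}, and read off the normalising factor $f_{a,j}$ --- is indeed the shape of the argument in the cited paper, and the reduction steps you describe (presenting a general $X$ as a quotient of convolutions of the $L(j)_z$, functoriality from compatibility with intertwiners, the hexagon from the factorisation of the raw $R$-matrix) are sound.

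However, the case analysis at the heart of (i)(a) contains concrete errors. For $a\le j\le a+N-1$ you invoke Proposition \ref{prop:exact sequences}~(v) and claim a kernel and cokernel ``involving $L(a,j)\conv L(j,a+N-1)$''; in fact this range is case (iii) (the segment $(j,j)$ is contained in $(a,a+N-1)$), so $L_a\conv L(j)$ is already simple in $\As$ and $\rmat{L_a,L(j)}$ is an isomorphism before passing to the quotient. The factor $z^{-\delta(a\le j<a+N-1)}$ has nothing to do with $\Ss_N$ here: it records the order of vanishing of $R_{L_a,L(j)_z}$ at $z=0$, which is $1$ for $a\le j\le a+N-2$ and $0$ for $j=a+N-1$, and must be computed from the degree bookkeeping, not inferred from a non-existent composition series. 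Second, in the boundary cases $j=a-1$ and $j=a+N$ the kernel and cokernel produced by case (vi) are $L(a-1,a+N-1)$ and $L(a,a+N)$, segments of length $N+1$, not $N$ as you write; length-$N$ segments are exactly the $L_a$ themselves and do \emph{not} lie in $\Ss_N$ --- otherwise the localization would be trivial. Finally, for (ii) the check ``$\prod_{k=0}^{N-1}f_{a,a+k}(0)=1$'' is not meaningful as stated, since $f_{a,a+k}(z)=z^{-1}$ for $0\le k\le N-2$ and the product has a pole at $z=0$; what must be verified is that the product of the $f_{a,a+k}(z_k)$ exactly cancels the total order of vanishing of the composite $R$-matrix restricted to the image of $L_a\conv L_a$, with resulting scalar $1$ --- a genuine computation rather than a substitution. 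The same caveat applies to the scalar cancellation you invoke in (iii).
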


\medskip

By the preceding theorem, $\{(L_a, R_a)\}_{a \in J}$ forms a
commuting family of central objects in $(\AA / \Ss_N , \nconv)$ (
\cite[Appendix A. 4]{KKK13A}). Following \cite[Appendix A.
6]{KKK13A}, we localize $(\AA / \Ss_N , \nconv)$ by this commuting
family. Let us denote by  $\T'_N$ the resulting category $(\AA /
\Ss_N)[L_a^{\nconv -1}\mid a\in J]$ . Let $\Upsilon \colon \AA /
\Ss_N \to \T'_N$ be the  projection functor. We denote by $\T_N$ the
tensor category $(\As / \Ss_N)[L_a\simeq\one\mid a\in J]$ and by
$\Xi \col \T'_N \to \T_N$ the canonical functor (see \cite[Appendix
A.7]{KKK13A} and \cite[Remark 4.5.9]{KKK13A}). Thus we have a chain
of tensor functors
$$\As \To[\ {\mathcal Q}\ ]\As/\Ss_N\To[\ \Upsilon\ ] \T'_N\seteq(\AA / \Ss_N)[L_a^{\nconv -1}\mid a\in J]
\To[\ \Xi\ ] \T_N\seteq(\As / \Ss_N)[L_a\simeq\one\mid a\in J].$$

The categories $\T_N$ and $\T'_N$ are rigid tensor categories; i.e.,
every object has a right dual and a left dual  (\cite[Theorem
4.6.3]{KKK13A}).

\medskip

In the rest of this section, we will show that the functor
$\F^{(2)'}$ factors through the category  $\T_N$. First, we need

\begin{lemma} For $b\in J$, set $V_k=V^{(2)}(\varpi_1)_{q^{-2k}}$  $(1\le
k\le N)$, $W=V_{N}\tens V_{N-1}\tens\cdots \tens V_{1}$, and choose
an epimorphism $\vphi\col W\to \cor$ in $\UAtwo\smod$.
 Let
$$\Rnorm_{W,V_z}\col W\tens V_z\to V_z\tens W$$
be the $R$-matrix obtained by the composition of normalized
$R$-matrices
$$V_{N}\tens\cdots \tens V_{1}\tens V_z
\To[\Rnorm_{V_1,V_z}]V_{N}\tens\cdots \tens V_{2} \tens V_z\tens
V_1\To\cdots \To[\Rnorm_{V_N,V_z}]V_z\tens V_{N}\tens\cdots \tens
V_{1},$$ and  let  $g(z)= q^{N}
\dfrac{(z-q^{-2N})(z+q^{-N-2})}{(z-q^{-2})(z+q^{-N})}$.

Then we have a commutative diagram \eq \xymatrix{
W\tens V_z\ar[r]^{\Rnorm_{W,V_z}}\ar[d]_{\vphi\tens V_z}&V_z\tens W\ar[d]^{V_z\tens \vphi}\\
\cor\tens V_z\ar[r]^{g(z)}& V_z\tens \cor. }\label{diag:RV} \eneq
\end{lemma}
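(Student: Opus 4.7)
My plan is to first show the diagram commutes up to a rational scalar $g(z)\in\cor(z)$, and then to identify this scalar with the stated formula. After the identifications $\cor\tens V_z\simeq V_z\simeq V_z\tens\cor$, both paths in the diagram are $\UAtwo$-module morphisms $W\tens V_z\to V_z$, with the one along the top depending rationally on $z$. To show $\Hom_{\UAtwo}(W\tens V_z,V_z)$ is one-dimensional over $\cor(z)$ for generic $z$, I would start from the short exact sequence $0\to K\to W\xrightarrow{\vphi}\cor\to 0$ with $K\seteq\ker\vphi$; tensoring with the flat module $V_z$ and applying $\Hom_{\UAtwo}(-,V_z)$ gives
$$0\to\Hom(V_z,V_z)\to\Hom(W\tens V_z,V_z)\to\Hom(K\tens V_z,V_z),$$
and the last term vanishes for generic $z$ because the composition factors of $K$ are simple $\UAtwo$-modules at specific fixed spectral parameters (and in particular are not isomorphic to the trivial module, since $\cor$ appears as a composition factor of $W$ with multiplicity $1$), so no such constituent can produce a non-zero morphism to the generic simple $V_z$ after tensoring. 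This yields $(V_z\tens\vphi)\circ\Rnorm_{W,V_z}=g(z)\cdot(\vphi\tens V_z)$ for a unique $g(z)\in\cor(z)$.

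To identify $g(z)$, I would induct on $N$ using $W=V_N\tens W'$ with $W'=V_{N-1}\tens\cdots\tens V_1$. By Proposition \ref{prop:image of fund. repns.}, $W'$ admits a surjection $\vphi'\col W'\epito V_0'\seteq V^{(2)}(\varpi_1)_{-q^{-N}}$, and $\vphi$ factors as $\vphi=\vphi''\circ(\id_{V_N}\tens\vphi')$ for the essentially unique evaluation $\vphi''\col V_N\tens V_0'\to\cor$ (coming from the zero $-q^N$ of $d_{1,1}$). Combining
$$\Rnorm_{W,V_z}=\bl\Rnorm_{V_N,V_z}\tens\id_{W'}\br\circ\bl\id_{V_N}\tens\Rnorm_{W',V_z}\br$$
with an analogous inductive identity for the surjection $\vphi'$---which produces a scalar $h(z)\in\cor(z)$ intertwining $(V_z\tens\vphi')\circ\Rnorm_{W',V_z}$ with $\Rnorm_{V_0',V_z}\circ(\vphi'\tens V_z)$---reduces the claim to a two-factor identity involving only $\vphi''$, $\Rnorm_{V_N,V_z}$, and $\Rnorm_{V_0',V_z}$. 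This can be verified directly from the denominator $d_{1,1}(w)=(w-q^2)(w+q^N)$ in \eqref{eq:denom A^(2)_N-1} together with the standard fact that an evaluation morphism intertwines normalized R-matrices up to a computable ratio of normalization scalars.

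The principal difficulty is the rational-function bookkeeping in the inductive step. A cleaner alternative I would also pursue is to pin down $g(z)$ purely by its singularity data: show that after the cancellations induced by composing with the $\vphi$'s the only surviving poles of $g(z)$ are $z=q^{-2}$ (from $\Rnorm_{V_2,V_z}$) and $z=-q^{-N}$ (from $\Rnorm_{V_N,V_z}$), and identify the zeros $z=q^{-2N}$, $z=-q^{-N-2}$ as the spectral parameters at which the composite $(V_z\tens\vphi)\circ\Rnorm_{W,V_z}$ vanishes on structural grounds. The overall constant $q^N$ is then fixed by one explicit evaluation, for instance on a weight-zero vector $w_0\in W$ with $\vphi(w_0)=1$ paired with a highest-weight vector of $V_z$.
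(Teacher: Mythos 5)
Your first step---showing that $(V_z\tens\vphi)\circ\Rnorm_{W,V_z}=g(z)\,(\vphi\tens V_z)$ for \emph{some} $g(z)\in\cor(z)$---is sound and is essentially a substitute for what the paper gets for free from the naturality of the universal $R$-matrix. The genuine gap is in the identification of $g(z)$, which is the actual content of the lemma. Your reduction to a ``two-factor identity'' asserts that the resulting scalar ``can be verified directly from the denominator $d_{1,1}(w)=(w-q^2)(w+q^N)$ together with the standard fact that an evaluation morphism intertwines normalized $R$-matrices up to a computable ratio of normalization scalars.'' This is where the argument fails: the denominator only locates the poles of $\Rnorm$; it does not determine the scalar by which a quotient map intertwines two normalized $R$-matrices. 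That scalar is a ratio of the functions $a_{ij}(z)$ relating the universal and normalized $R$-matrices, and these are transcendental data (infinite products) that must be taken as external input. The paper's proof is exactly this: naturality of $\Runiv$ gives $g(z)^{-1}=\prod_{k=1}^{N}a_{11}(q^{2k}z)$ at once (no induction needed), and then the explicit formula
$a_{11}(z) = q\,\tfrac{[N+2]'[N-2]'}{[N]'[N]'}\cdot\tfrac{[0][2N]}{[2][2N-2]}$
from \cite[(4.11)]{Oh14} is substituted and the product telescopes to the stated rational function. Your inductive scheme does not avoid this: the intermediate scalar $h(z)$ and the final two-factor scalar are again ratios of the same $a_{11}$-values, so without citing and using the explicit formula for $a_{11}$ you cannot obtain the zeros $q^{-2N}$, $-q^{-N-2}$, the poles $q^{-2}$, $-q^{-N}$, or the constant $q^N$.

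Your fallback via ``singularity data'' has the same problem in a different guise: the claims that only the poles at $z=q^{-2}$ and $z=-q^{-N}$ survive the composition with $\vphi$, and that the composite vanishes at $z=q^{-2N}$ and $z=-q^{-N-2}$ ``on structural grounds,'' are precisely the assertions that need proof, and nothing in the proposal supplies it (note, for instance, that at $z=q^{-2N}$ one has $V_z\simeq V_N$ and $\Rnorm_{V_N,V_z}=\id$ there, so the vanishing is not visible from any single factor). If you want to salvage your route, replace the ``direct verification from the denominator'' by an explicit appeal to the universal $R$-matrix normalization $a_{11}(z)$ of \cite{Oh14}; at that point the induction becomes unnecessary and you recover the paper's one-line telescoping computation.
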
 \begin{proof} Let $a_{11}(z)$ be the function satisfying
$\Runiv_{V^{(2)}(\varpi_1), V^{(2)}(\varpi_1)} = a_{11}(z)
\Rnorm_{V^{(2)}(\varpi_1), V^{(2)}(\varpi_1)}$, where
$\Runiv_{V^{(2)}(\varpi_1), V^{(2)}(\varpi_1)}$ denotes the
universal $R$-matrix
 between $V^{(2)}(\varpi_1)$ and $V^{(2)}(\varpi_1)$ (
 see \cite[Appendix A]{AK}).
Then the diagram \eqref{diag:RV} is commutative, if
$g(z)=\displaystyle \prod_{k=1}^N a_{11}(q^{2k} z)^{-1}$.

By \cite[(4.11)]{Oh14}, we have
\begin{align}
 a_{11}(z) &= q \dfrac{[ N+2 ]' [ N-2 ]'}{[ N ]' [ N ]'}
                    \dfrac{[0][2N]}{[2][2N-2]} , \label{eq:a11}
\end{align}
where $(z;q)_\infty= \prod_{s=0}^\infty (1-q^sz)$, $[a]=((-q)^{a}z ;
p^{*2})_\infty$, $[ a ]'=(-q^{a}z ; p^{*2})_\infty$ and
$p^*=-q^{N}$. It follows that \eqn \displaystyle\prod_{k=1}^N
a_{11}(q^{2k} z) &&=q^N \displaystyle\prod_{k=1}^N  \dfrac{[ N+2+2k
]' [ N-2+2k ]'}{[ N+2k ]' [ N+2k ]'}
                    \dfrac{[2k][2N+2k]}{[2+2k][2N-2+2k]} \\
&&= q^N  \dfrac{[ 3N+2 ]' [ N ]'}{[ N+2 ]' [ 3N ]'}
                    \dfrac{[2][4N]}{[2N+2][2N]}
                    =q^{-N} \dfrac{(z+q^{-N})(z-q^{-2})}{(z+q^{-N-2})(z-q^{-2N})} \\
&&=g(z)^{-1},
 \eneqn
as desired. \end{proof} The proof of the following lemma is
straightforward. \begin{lemma} \label{lem:cij} Let
$\set{\psi_k(z)}{k\in\Z} \subset \cor[[z]]^\times$ be a family of
power series such that \eq \label{cond:psi_a} \psi_a(0)
\psi_{-a-N+1}(0)=1 \ ( a \in \Z), \quad \displaystyle
\prod_{k=1-N}^{0}\psi_k(0)=1. \eneq Set \eqn
&&\phi_0(z) \seteq1,\\
&&\phi_k(z) \seteq \psi_{k-N}(z)^{-1} \quad (1 \le k \le N-2),\\
&&\phi_{N-1}(z) \seteq \psi_0(z) \displaystyle \prod_{k=1}^{N-2} \phi_k(z)^{-1}, \\
&&\phi_k(z) \seteq
\begin{cases}
\dfrac{\psi_{k-N+1}(z)}{\psi_{k-N}(z)} \phi_{k-N}(z) & (k \ge N), \\
\dfrac{\psi_{k}(z)}{\psi_{k+1}(z)} \phi_{k+N}(z) & (k \le -1).
\end{cases}
\eneqn Then $\set{\phi_k(z)}{k\in\Z}$ satisfies \eqn
&&\phi_a(0) \phi_{-a}(0) =1 \quad (a \in\Z), \\
&&\displaystyle \prod_{k=a}^{a+N-1}\phi_k(z)=\psi_a(z) \quad (a \in
\Z).\eneqn \end{lemma}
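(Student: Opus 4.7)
My plan is to prove the two assertions in turn, and both reduce to careful bookkeeping with the two hypotheses $\psi_a(0)\psi_{-a-N+1}(0)=1$ and $\prod_{k=1-N}^{0}\psi_k(0)=1$.

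First I would handle the second assertion $\prod_{k=a}^{a+N-1}\phi_k(z)=\psi_a(z)$ by induction on $|a|$, treating $a\ge0$ and $a\le0$ separately. The base case $a=0$ is immediate: substituting the explicit definitions of $\phi_0,\ldots,\phi_{N-1}$, the product telescopes to $\psi_0(z)$ precisely because $\phi_{N-1}(z)$ was defined to compensate for $\phi_0(z)\cdots\phi_{N-2}(z)$. For the step from $a-1$ to $a$ with $a\ge1$, the new product differs from the old one by the factor $\phi_{a+N-1}(z)/\phi_{a-1}(z)$, and since $a+N-1\ge N$ the defining recursion gives $\phi_{a+N-1}(z)=\dfrac{\psi_a(z)}{\psi_{a-1}(z)}\phi_{a-1}(z)$, so the ratio equals $\psi_a(z)/\psi_{a-1}(z)$ as needed. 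The step for $a\le-1$ is symmetric, using the recursion $\phi_k(z)=\dfrac{\psi_k(z)}{\psi_{k+1}(z)}\phi_{k+N}(z)$.

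Next I would prove the first assertion $\phi_a(0)\phi_{-a}(0)=1$ by induction on $|a|$. The case $a=0$ is trivial. For $2\le a\le N-2$, I would insert the explicit formula $\phi_a(0)=\psi_{a-N}(0)^{-1}$ and expand $\phi_{-a}(0)$ via the recursion for negative indices down to $\phi_{-a+N}(0)=\psi_{-a}(0)^{-1}$; the pairing hypothesis $\psi_{1-a}(0)\psi_{a-N}(0)=1$ then gives $\phi_{-a}(0)=\psi_{a-N}(0)$ and the product is $1$. For the special indices $a=1$ and $a=N-1$, one must unwind the explicit formula for $\phi_{N-1}(0)$ (a product of several $\psi_{k-N}(0)$'s) and invoke the product hypothesis $\prod_{k=1-N}^{0}\psi_k(0)=1$ together with the pairing to collapse the expression; this is where both hypotheses are genuinely used at once. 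Finally, for $|a|\ge N$ I would apply the defining recursions to both $\phi_a(0)$ and $\phi_{-a}(0)$, obtaining
\[
\phi_a(0)\phi_{-a}(0)=\frac{\psi_{a-N+1}(0)\,\psi_{-a}(0)}{\psi_{a-N}(0)\,\psi_{-a+1}(0)}\cdot\phi_{a-N}(0)\phi_{-(a-N)}(0).
\]
The pairing hypothesis applied to the index $j=a-N+1$ (giving $\psi_{a-N+1}(0)\psi_{-a}(0)=1$) and to $j=a-N$ (giving $\psi_{a-N}(0)\psi_{-a+1}(0)=1$) forces the prefactor to be $1$, and the inductive hypothesis finishes the step.

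\textbf{Main obstacle.} The routine recursions handle $|a|\ge N$ cleanly, but the boundary indices $a\in\{1,N-1\}$ are the only step that genuinely mixes the two hypotheses: the explicit formula for $\phi_{N-1}(0)$ brings in the full product $\psi_0(0)\prod_{k=1}^{N-2}\psi_{k-N}(0)$, and one must rewrite it via $\prod_{k=1-N}^{0}\psi_k(0)=1$ and then reconcile the answer with $\psi_{1-N}(0)=\psi_0(0)^{-1}$ coming from the pairing. This is the one place where I would write the computation out in detail; the rest is symmetric bookkeeping.
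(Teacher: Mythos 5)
Your proposal is correct. The paper omits this proof entirely (declaring it "straightforward"), and your argument — verifying $\prod_{k=a}^{a+N-1}\phi_k=\psi_a$ by a one-step induction using the period-$N$ recursions, then checking $\phi_a(0)\phi_{-a}(0)=1$ on the base window $0\le a\le N-1$ (with the boundary cases $a=1,\,N-1$ consuming both hypotheses via $\phi_{N-1}(0)=\psi_{-1}(0)^{-1}$) and propagating by steps of $N$ using the pairing identities $\psi_{a-N+1}(0)\psi_{-a}(0)=1$ and $\psi_{a-N}(0)\psi_{1-a}(0)=1$ — is precisely the intended verification and is complete.
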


Now we make a special choice of $c_{ij}$ in $\S$ \ref{sec:SWfunctor}
as follows: For $a \in \Z$, set \eqn \psi_a(z) &&\seteq
(-1)^{\delta(1-N \le a \le -1\ )}
 (z^{\delta(a=0)-\delta(a=1-N)})
\ g(q^{2(-a-N)}(z+1))^{-1} \\
&&=(-1)^{\delta(1-N\le a\le -1)}
\dfrac{(z+1+q^{2a+N})(z+1-q^{2a+2N-2})^{\delta(a\neq 1-N)}}
{(z+1+q^{2a+N-2})(z+1-q^{2a})^{\delta(a\neq 0)}} \in
\cor[[z]]^\times. \eneqn Then it is straightforward to check
$\{\psi_a(z)\}_{a \in \Z}$ satisfy the conditions
\eqref{cond:psi_a}.
 Define $\{\phi_a(z)\}_{a\in\Z}$ as in Lemma \ref{lem:cij}.
 Finally, set
 \eq
 c_{i,j}(u,v) \seteq \dfrac{\phi_{i-j}(v)}{\phi_{j-i}(u)} \phi_{j-i}(0).
 \label{eq:c}
 \eneq
Note that $\{c_{i,j}(u,v)\}_{i,j \in \Z}$ satisfy the  condition
\eqref{cond:cij} by the construction.

\begin{theorem} \label{thm:cij}
 If we choose  $\{c_{i,j}(u,v)\}_{i,j \in \Z}$ as above,
 then the diagram \cite[(A.7.1)]{KKK13A}  is commutative  for  the  functor $\F^{(2)'}\col\As/\Ss_N\to \UAtwo\smod$
 and  the  commuting family of central  objects  $\{(L_a,R_a)\}_{a\in J}$. That is,
 the diagram
\eqn &&\hs{-4ex} \ba{l}\xymatrix@C=8.5ex{ \F^{(2)'}(L_a\nconv
M)\ar[d]^-{\F^{(2)'}(R_a(M))}\ar[r]^-\sim
&\F^{(2)'}(L_a)\tens\F^{(2)'}(M)\ar[r]^-{ g_a \tens \F^{(2)'}(M)}
&\cor\tens \F^{(2)'}(M)\ar[dr]\\
\F^{(2)'}(M\nconv
L_a)\ar[r]^-\sim&\F^{(2)'}(M)\tens\F^{(2)'}(L_a)\ar[r]^-{\F^{(2)'}(M)\tens
g_a} &\F^{(2)'}(M)\tens  \cor  \ar[r]&\F^{(2)'}(M)
}\ea\label{eq:1com} \eneqn
 is commutative  for any isomorphism $ g_a \col\F'(L_a)\isoto\cor$.
\end{theorem}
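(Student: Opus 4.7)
By naturality in $M$ and the compatibility of $R_a$ with $\nconv$ from Theorem~\ref{thm:L_a commutes with X}(i)(b), it suffices to verify \eqref{eq:1com} for basic modules $M = L(j)_z$ with $j \in J$. For such $M$, Proposition~\ref{prop:image of fund. repns.} gives $\F^{(2)'}(L_a) \simeq \cor$ (since $L_a$ has length $N$), while Proposition~\ref{prop:image of tau}(i) identifies $\F^{(2)'}(L(j)_z)$ with $\cor[[z]] \tens_{\cor[z_V^{\pm1}]} V_\aff$ under $z_V \mapsto q^{2j}(1+z)$. Both composites in \eqref{eq:1com} are therefore $\cor[[z]]$-linear endomorphisms of $\F^{(2)'}(L(j)_z)$, so the diagram reduces to showing that $\F^{(2)'}(R_a(L(j)_z))$, after identifying $\F^{(2)'}(L_a)$ with $\cor$, is multiplication by $1$.

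To compute $\F^{(2)}(R_{L_a, L(j)_z})$, I would lift $L_a$ through the canonical surjection $L(a) \conv L(a+1) \conv \cdots \conv L(a+N-1) \epito L_a$ in $\As$, whose kernel lies in $\Ss_N$ and is therefore killed by $\F^{(2)}$. The $R$-matrix $R_{L_a, L(j)_z}$ then lifts to the composition of the elementary $R$-matrices $R_{L(a+k), L(j)_z}$ for $k = N-1, N-2, \ldots, 0$, and Proposition~\ref{prop:image of tau}(ii) identifies $\F^{(2)}(R_{L(a+k), L(j)_z})$ with the normalized $R$-matrix $\Rnorm_{V_{q^{2(a+k)}},\, V_{q^{2j}(1+z)}}$ multiplied by $c_{a+k, j}(0, z) = \phi_{a+k-j}(z)$ and, when $j = a+k+1$, by one extra factor $-z$ from $(X_{a+k}/q^{2(a+k)} - X_j/(q^{2j}(1+z)))^{d_{a+k,j}}$. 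These vanishing factors combine to give $(-z)^{\delta(a+1 \le j \le a+N)}$ in total, while the composition of normalized $R$-matrices, pushed through the epimorphism $\bigotimes_{k=0}^{N-1} V_{q^{2(a+k)}} \epito \cor$, equals multiplication by $g\bigl(q^{2(j-a-N)}(1+z)\bigr)$ by the auxiliary lemma on $\Rnorm_{W, V_z}$ (after rescaling the spectral variable by $q^{2a+2N}$ to identify $W$ with $V_{q^{2a}} \tens \cdots \tens V_{q^{2a+2N-2}}$).

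Collecting these ingredients, the theorem reduces to the scalar identity
\[
f_{a,j}(z) \cdot (-z)^{\delta(a+1 \le j \le a+N)} \cdot \prod_{k=0}^{N-1} \phi_{a+k-j}(z) \cdot g\bigl(q^{2(j-a-N)}(1+z)\bigr) = 1.
\]
The product $\prod_{k=0}^{N-1} \phi_{a+k-j}(z)$ runs over $N$ consecutive values of the index and therefore telescopes to $\psi_{a-j}(z)$ by the collapsing identity $\prod_{m=b}^{b+N-1} \phi_m(z) = \psi_b(z)$ from Lemma~\ref{lem:cij}. Since $\psi_b(z)$ was defined in the paragraph preceding the theorem as $\pm z^{\delta(b=0) - \delta(b=1-N)} g(q^{-2(b+N)}(z+1))^{-1}$, the product $\psi_{a-j}(z) g\bigl(q^{2(j-a-N)}(1+z)\bigr)$ simplifies to $(-1)^{\delta(1 \le j-a \le N-1)} z^{\delta(j=a) - \delta(j=a+N-1)}$, and a short manipulation of $\delta$-exponents then yields precisely the formula \eqref{eq:faj} for $f_{a,j}(z)$.

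The main obstacle is the careful bookkeeping of signs and $z$-monomial factors distributed across $f_{a,j}$, $\psi_a$, and the vanishing contributions arising from $d_{a+k,j}$: one must track the boundary cases $j = a$, $j = a+N-1$, and $j = a+N$ separately, since only at these values do the relevant $\delta$-functions change. Once this casework is carried out, the cancellation is automatic, as the functions $\psi_a$ and the normalization $f_{a,j}$ were reverse-engineered from $g(z)$ precisely for this identity to hold.
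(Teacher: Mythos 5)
Your overall route coincides with the paper's: the paper likewise reduces the commutativity of \eqref{eq:1com} to the single scalar identity
$f_{a,j}(z)\, g(q^{2(j-a-N)}(z+1)) \prod_{a\le k\le a+N-1} P_{k,j}(0,z)=1$,
citing the analogous argument of \cite[Theorem 4.6.5]{KKK13A} for the categorical reduction to $M=L(j)_z$ and for the identification of $\F^{(2)}(R_{L_a,L(j)_z})$ via the lemma on $\Rnorm_{W,V_z}$, and then verifies it exactly as you do: $\prod_{k}c_{k,j}(0,z)$ telescopes to $\psi_{a-j}(z)$ by Lemma \ref{lem:cij}, the vanishing factors contribute $(-z)^{\delta(a+1\le j\le a+N)}$, and the definition of $\psi$ in terms of $g$ makes everything cancel against $f_{a,j}(z)$. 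Your bookkeeping of the $\delta$-exponents in the boundary cases $j=a$, $j=a+N-1$, $j=a+N$ is correct.

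One justification in your second paragraph is wrong, however: the kernel of $L(a)\conv L(a+1)\conv\cdots\conv L(a+N-1)\epito L_a$ does \emph{not} lie in $\Ss_N$ and is \emph{not} killed by $\F^{(2)}$. Every composition factor of $L(a)\conv\cdots\conv L(a+N-1)$ has an associated multisegment all of whose segments are contained in $(a,a+N-1)$, hence of length at most $N$, so none of them belongs to $\Ss_N$ (membership requires a segment of length at least $N+1$); concretely, $\F^{(2)}$ of the kernel has dimension $(\dim_\cor V)^N-1\ne 0$. What you actually need, and what makes the rest of your computation valid, is not that the kernel dies but that $R_{-,L(j)_z}$ is functorial: the surjection intertwines $R_{L(a)\conv\cdots\conv L(a+N-1),\,L(j)_z}$ (the composite of the elementary $R$-matrices) with $R_{L_a,L(j)_z}$, so applying the exact tensor functor $\F^{(2)}$ and pushing through the resulting epimorphism $W\epito\cor$ computes $\F^{(2)}(R_{L_a,L(j)_z})$ as $g\bigl(q^{2(j-a-N)}(1+z)\bigr)$ times the product of the $P_{a+k,j}(0,z)$ --- this is precisely the content of the commutative diagram \eqref{diag:RV}. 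With that repair the proposal is a correct proof along the same lines as the paper's.
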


\begin{proof} By the same argument in \cite[Theorem 4.6.5]{KKK13A}, it is
enough to show that \eq&&f_{a,j}(z) \ g(q^{2(j-a-N)}(z+1))
\prod_{a\le k\le a+N-1}P_{k,j}(0,z)=1
 \label{eq:fajPkj}\eneq
 for all $a,j \in \Z$.
Recall that  $P_{k,j}(0,z)=c_{k,j}(0,z)(-z)^{\delta(j=k+1)}$ and
\eqn f_{a,j}(z)= (-1)^{\delta_{j,a+N}} z^{-\delta(a \le j <
a+N-1)-\delta_{j,a+N}}. \eneqn Hence it amounts to showing that \eqn
\label{eq:ck0z}&&\hs{-5ex}\ba{rcl} \displaystyle\prod_{k=a}^{a+N-1}
c_{k,j}(0,z)&=& (-1)^{\delta(a+1\le j\le a+N-1)}
 (z^{\delta(j=a)-\delta(j=a+N-1)})
\ g(q^{2(j-a-N)}(z+1))^{-1} \ea \eneqn for all $a,j \in \Z$. Since
$c_{i+1,j+1}(u,v) = c_{i,j}(u,v)$ for all $i,j \in\Z$, we have only
to show that

\eqn \displaystyle\prod_{k=a}^{a+N-1} c_{k,0}(0,z)&&=
(-1)^{\delta(a+1\le 0\le a+N-1)}
 (z^{\delta(a=0)-\delta(0=a+N-1)})
\ g(q^{2(-a-N)}(z+1))^{-1} \\
&&= \psi_a(z). \eneqn for all $a \in\Z$.

Since $c_{k,0}(0,z)=\phi_k(z)$, we obtain the desired result.
\end{proof}

Hence, \cite[Proposition A.7.3]{KKK13A} implies that the functor
$\F^{(2)'} \colon \As/\Ss_N\to\UAtwo\smod$ factors through $\T_N$.
Consequently, we obtain a tensor functor $\tF^{(2)} \col \T_N\to
\UAtwo\smod$ such that the following diagram quasi-commutes:
\begin{equation}
 \xymatrix@C=10ex{
\As\ar[r]^-{\mathcal
Q}\ar[drr]_-{\F^{(2)}}&\As/\Ss_N\ar[r]^{\Upsilon
}\ar[rd]^(.55){\F^{(2)'}}
&\T'_N\ar[d]\ar[r]^{ \Xi  }&\T_N\ar[dl]^-{\tF^{(2)}}\\
&&\UAtwo\smod\,.}
\end{equation}

Moreover, by \cite[Proposition A.7.2]{KKK13A}, we obtain
\begin{prop}
  The functor $\tF^{(2)}$ is exact.
\end{prop}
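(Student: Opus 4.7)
The plan is to invoke the general localization formalism of \cite[Appendix A]{KKK13A}, in particular \cite[Proposition~A.7.2]{KKK13A}, whose hypotheses have essentially all been verified in the preceding subsections; what remains is to assemble them.

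First, I would record that the functor $\F^{(2)}\col\soplus_{\beta\in\rtl^+_J}R(\beta)\gmod\to \UAtwo\smod$ at the source is exact by Theorem~\ref{thm:exact}, since the Cartan matrix $A^J$ is of type $A_\infty$ (every finitely generated subdatum being of finite type $A$). Next, I would check that $\F^{(2)}$ annihilates the Serre subcategory $\Ss_N\subset\As$: Proposition~\ref{prop:image of fund. repns.} gives $\F^{(2)}(L(a,a+N))=0$ because the segment has length $N+1>N$, and since $\F^{(2)}$ is a tensor functor (Theorem~\ref{thm:conv to tensor}) it also kills every convolution $X\conv L(a,a+N)\conv Y$; by the defining minimality of $\Ss_N$ the whole subcategory lies in $\ker\F^{(2)}$. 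The universal property of the Serre quotient \cite[Theorem~B.1.1]{KKK13A} then produces the exact factorization $\F^{(2)'}\col\As/\Ss_N\to\UAtwo\smod$ already used in the text.

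Second, I would invoke Theorem~\ref{thm:cij}, which is precisely the compatibility between $\F^{(2)'}$ and the commuting family of central objects $\{(L_a,R_a)\}_{a\in J}$ in the form required by diagram \cite[(A.7.1)]{KKK13A}. This is the content of the specific choice of $\{c_{i,j}(u,v)\}$ made in \eqref{eq:c}, engineered so that the key identity \eqref{eq:fajPkj} holds; this is the step where actual computation is needed, but it has already been carried out.

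With these inputs, \cite[Proposition~A.7.3]{KKK13A} yields the factorization $\tF^{(2)}\col\T_N\to\UAtwo\smod$ as a tensor functor through $\Xi\circ\Upsilon$, and \cite[Proposition~A.7.2]{KKK13A} delivers its exactness: localizing at a commuting family of central objects preserves exactness of any exact functor that respects the family in the sense of \cite[(A.7.1)]{KKK13A}. The only nontrivial ingredient is the compatibility in Theorem~\ref{thm:cij}; once that is in hand, exactness of $\tF^{(2)}$ is a formal consequence of the localization construction, so the proof reduces to a single citation.
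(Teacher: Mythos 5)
Your proposal is correct and follows exactly the paper's route: the compatibility established in Theorem~\ref{thm:cij} verifies the hypothesis \cite[(A.7.1)]{KKK13A}, after which \cite[Proposition~A.7.3]{KKK13A} gives the factorization through $\T_N$ and \cite[Proposition~A.7.2]{KKK13A} gives exactness of $\tF^{(2)}$. The paper states this proposition with no further proof beyond the citation, so your assembly of the prerequisites matches its argument.
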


Recall that  $\CC^0_\g$ is the smallest full subcategory of $\CC_\g$
stable under taking subquotients,  extensions, tensor products and
containing $\set{V(\varpi_i)_x}{(i,x) \in \mathscr S_0(\g)}$. By
Proposition \ref{prop:multisegments}  and Proposition
\ref{prop:image of fund. repns.}, the images of the functors
$\F^{(2)}$, $\F^{(2)'}$ and $\tF^{(2)}$ are inside the category
$\CC^0_\g$.

Let us denote by  $\Irr(\T_N)$ the set of the isomorphism classes of
simple objects in $\T_N$.
 Define an equivalence relation $\sim$ on $\Irr(\T_N)$ by $X \sim Y$ if and only if
$X \simeq q^c Y$ in $\T_N$ for some integer $c$. Let
$\Irr(\T_N)_{q=1}$ be a set of representatives of elements in
$\Irr(\T_N) / \sim $. Then the set $\Irr(\T_N)_{q=1}$  is isomorphic
to the set of ordered multisegments
$$\bl (a_1,b_1),\ldots (a_r,b_r) \br$$ satisfying
\begin{equation} \label{eq:segments<N}
  b_k -a_k+1 <N \ \text{for any} \
    1 \le k \le r.
\end{equation}

Since the proofs of the following proposition and theorem are
similar to the ones in \cite[\S4,7]{KKK13A}, we omit them.
\begin{prop} The functor $\tF^{(2)}\col \T_N \to
\CC^0_{A^{(2)}_{N-1}}$ induces a bijection between
$\Irr(\T_N)_{q=1}$ and $\Irr(\CC^0_{A^{(2)}_{N-1}})$, the set of
isomorphism classes of simple objects in $ \CC^0_{A^{(2)}_{N-1}}$.
\end{prop}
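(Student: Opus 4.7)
The plan is to adapt the argument of \cite[\S4.7]{KKK13A} (specifically its Theorem 4.7.1) to the twisted setting. First I would identify $\Irr(\T_N)_{q=1}$ explicitly with the set of ordered multisegments $\big((a_1,b_1),\ldots,(a_r,b_r)\big)$ such that $b_k - a_k + 1 < N$ for all $k$. This is essentially recorded in the excerpt already: Proposition \ref{prop:multisegments} classifies simple objects of $\As$; the Serre subcategory $\Ss_N$ kills all multisegments with some segment of length $>N$; and the identification $L_a \simeq \one$ in $\T_N$ folds away the extremal case $b_k - a_k + 1 = N$.

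For a given multisegment as above, Theorem \ref{thm:irreducible to irreducible}, Proposition \ref{prop:image of fund. repns.}, and the tensor-functoriality of $\F^{(2)}$ (Theorem \ref{thm:conv to tensor}) show that the associated simple $M = \hd\big(L(a_1,b_1)\conv\cdots\conv L(a_r,b_r)\big)$ maps under $\tF^{(2)}$ to the (necessarily simple) head of $\bigotimes_{k} V^{(2)}\big(\pi^{(2)}(\ell_k,(-q)^{a_k+b_k})\big)$, where $\ell_k=b_k-a_k+1$. For surjectivity onto $\Irr(\CC^0_{A^{(2)}_{N-1}})$, every simple in $\CC^0_{A^{(2)}_{N-1}}$ arises as the head of a suitably ordered tensor product of fundamentals $V^{(2)}(i,y)$ with $(i,y) \in \mathscr S_0(A^{(2)}_{N-1})$; Proposition \ref{prop:quiver isom} together with the surjectivity of $\pi^{(2)}$ onto $\hat I_{A^{(2)}_{N-1}}$ lets each such factor be realized as $\F^{(2)}$ of a segment of length $<N$, and convolving these in an appropriate order produces a preimage in $\T_N$.

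The main obstacle is injectivity: two distinct ordered multisegments must produce non-isomorphic simple objects in $\CC^0_{A^{(2)}_{N-1}}$. Following \cite{KKK13A}, I would use the total order on segments to identify the ``largest'' segment $(a_1,b_1)$ from the dominant extremal weight data of the image, and then peel it off via the exact sequences of Proposition \ref{prop:exact sequences}, which are transported through $\F^{(2)}$ to the exact sequences of Proposition \ref{prop:Oh14} and the surjections of Corollary \ref{cor:surjhoms}. An induction on the total length $\sum_{k}(b_k-a_k+1)$ then recovers the entire multisegment. The subtle point is that Lemma \ref{lem:intertwiners}, which identifies $\F^{(2)}(\rmat{L(a,b),L(a',b')})$ with a non-zero scalar multiple of the normalized $R$-matrix between the corresponding fundamentals, must supply enough control on the heads of ordered tensor products for the inductive peeling to be well-defined; this mirrors the untwisted argument, with the quiver isomorphism of Proposition \ref{prop:quiver isom} guaranteeing the formal parallel.
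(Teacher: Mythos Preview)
Your proposal is correct and matches the paper's own approach: the paper omits the proof entirely, stating only that it is similar to \cite[\S4.7]{KKK13A}, which is precisely the argument you outline. Your identification of $\Irr(\T_N)_{q=1}$ with ordered multisegments of segment lengths $<N$, the surjectivity via realizing every fundamental in $\mathscr S_0(A^{(2)}_{N-1})$ through Proposition~\ref{prop:image of fund. repns.} and Proposition~\ref{prop:quiver isom}, and the injectivity by recovering the multisegment from the image, all follow the intended template.

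One small sharpening for the injectivity step: rather than an inductive ``peeling'' via exact sequences, the cleaner route (and the one actually used in \cite[\S4.7]{KKK13A}) is to observe that a simple object in $\CC^0_{A^{(2)}_{N-1}}$ is determined by the multiset $\{(i_k,x_k)\}$ of fundamentals appearing in any realization as the head of an ordered tensor product (this is the $\ell$-highest weight or Drinfeld-polynomial parametrization, valid also in the twisted case by \cite{Her101}). Since the assignment $(a,b)\mapsto \pi^{(2)}(b-a+1,(-q)^{a+b})$ is injective on segments of length $<N$ by Proposition~\ref{prop:quiver isom}, distinct ordered multisegments give distinct multisets, hence distinct simples. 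Your peeling argument would also work, but invoking the classification is more direct and avoids having to control heads through the exact sequences.
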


\begin{theorem} The functor $\tF^{(2)}\col\T_N\to \CC^0_{A^{(2)}_{N-1}}$ induces
a ring  isomorphism
\begin{equation*}
\phi_{\tF^{(2)}} \col K(\T_N)/(q-1)K(\T_N) \isoto
K(\CC^0_{A^{(2)}_{N-1}}).
\end{equation*}
\end{theorem}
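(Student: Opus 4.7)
The plan is to exploit the bijection established in the preceding proposition and the fact that $\tF^{(2)}$ is an exact tensor functor, then upgrade this bijection on simple objects to an isomorphism of Grothendieck rings modulo $(q-1)$.

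First I would observe that since $\tF^{(2)}\col\T_N\to\CC^0_{A^{(2)}_{N-1}}$ is an exact functor between abelian categories, it induces a group homomorphism $K(\T_N)\to K(\CC^0_{A^{(2)}_{N-1}})$. Because it is a tensor functor, this map is a ring homomorphism. The target category has no grading, so the grade shift functor $q$ on $\T_N$ is sent to the identity: for every $X\in\T_N$, there is a canonical isomorphism $\tF^{(2)}(qX)\simeq\tF^{(2)}(X)$. Hence $(q-1)[X]$ lies in the kernel, and we get a well-defined ring homomorphism $\phi_{\tF^{(2)}}\col K(\T_N)/(q-1)K(\T_N)\to K(\CC^0_{A^{(2)}_{N-1}})$.

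Next I would handle surjectivity. Every simple object of $\CC^0_{A^{(2)}_{N-1}}$ lies in the image of $\tF^{(2)}$ by the previous proposition, and simple objects generate $K(\CC^0_{A^{(2)}_{N-1}})$ as an abelian group via composition series in the finite-length category $\CC^0_{A^{(2)}_{N-1}}$. Therefore $\phi_{\tF^{(2)}}$ is surjective.

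The main work lies in injectivity, which reduces to showing that the elements $\{[S]\}_{S\in\Irr(\T_N)_{q=1}}$ form a $\Z$-basis of $K(\T_N)/(q-1)K(\T_N)$. To this end I would show that $K(\T_N)$ is a free $\Z[q,q^{-1}]$-module with basis $\Irr(\T_N)_{q=1}$; reducing modulo $(q-1)$ then yields a free $\Z$-module with basis indexed by $\Irr(\T_N)_{q=1}$. The spanning property follows since every object of $\T_N$ has finite length in each graded weight space and its composition factors are grade shifts of elements of $\Irr(\T_N)_{q=1}$. For linear independence, I would pull back a putative non-trivial $\Z[q,q^{-1}]$-relation to $\As/\Ss_N$ via $\Xi\circ\Upsilon$ and use that in $\As/\Ss_N$ the classes of simple modules (which by the earlier proposition correspond to ordered multisegments of segment-length $\le N-1$) are $\Z[q,q^{-1}]$-linearly independent in $K(\As/\Ss_N)$ --- this is inherited from the analogous fact for $\As=R^{A_\infty}\gmod$, where simple modules are parametrized by arbitrary ordered multisegments. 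Finally, combining the basis of $K(\T_N)/(q-1)K(\T_N)$ with the bijection onto $\Irr(\CC^0_{A^{(2)}_{N-1}})$ supplied by the previous proposition, the ring homomorphism $\phi_{\tF^{(2)}}$ sends a $\Z$-basis bijectively onto a $\Z$-basis, and is therefore an isomorphism.

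The hard part will be the freeness / linear independence step, since localization at the central family $\{L_a\}$ and passing to $\T_N$ could in principle collapse classes of simple objects. The argument crucially uses that $\Xi\circ\Upsilon$ sends non-isomorphic simple objects of $\As/\Ss_N$ (i.e.~ordered multisegments with all segment lengths $<N$) to non-isomorphic simple objects of $\T_N$ up to grade shift, so that the bijection $\Irr(\T_N)_{q=1}\leftrightarrow\{\text{ordered multisegments satisfying \eqref{eq:segments<N}}\}$ noted after the proposition can be promoted to a basis statement; this in turn rests on Proposition \ref{prop:multisegments} and the construction of $\T_N$ as a localization. The argument mirrors \cite[\S 4.7]{KKK13A} almost verbatim, replacing $\CC^0_{A^{(1)}_{N-1}}$ by $\CC^0_{A^{(2)}_{N-1}}$ throughout.
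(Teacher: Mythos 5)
Your proposal is correct and follows essentially the same route as the paper, which omits the proof precisely because it is the argument of \cite[\S 4.7]{KKK13A} transported verbatim to the twisted case. The only cosmetic point is that your linear-independence step needs no ``pull-back'' to $\As/\Ss_N$ (the functor $\Xi\circ\Upsilon$ goes the wrong way for that): once one knows that $\T_N$ is a finite-length abelian category whose simple objects are classified, up to grade shift, by $\Irr(\T_N)_{q=1}$, the freeness of $K(\T_N)$ as a $\Z[q,q^{-1}]$-module on that set is the standard Jordan--H\"older argument.
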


Recall that in \cite[\S 4.6]{KKK13A}, we obtained a functor
$\tF^{(1)} : \T_N \To \CC^0_{A^{(1)}_{N-1}} $, where $\tF^{(1)}$,
$\T_N$ and $\CC^0_{A^{(1)}_{N-1}}$ were denoted by $\tF$, $\T_J$ and
$\mathcal C_J$, respectively. The functor $\tF^{(1)}$ also induces a
ring isomorphism
\begin{equation*}
\phi_{\tF^{(1)}} \col K(\T_N)/(q-1)K(\T_N) \isoto
K(\CC^0_{A^{(1)}_{N-1}}).
\end{equation*}

\begin{theorem}\label{th: dim} Let $M$ be a simple object in $\T_N$. Then we
have \eq \label{eq:dim} \dim_\cor \tF^{(1)}(M) = \dim_\cor
\tF^{(2)}(M). \eneq \end{theorem}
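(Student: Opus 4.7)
The plan is to realize both $\dim_\cor\tF^{(1)}(M)$ and $\dim_\cor\tF^{(2)}(M)$ as the value at $[M]$ of a single $\Z$-algebra homomorphism from $K(\T_N)/(q-1)K(\T_N)$ to $\Z$, and then to check that the two such homomorphisms agree on a ring-theoretic generating set. For $t=1,2$ I define
\[
\psi_t\col K(\T_N)/(q-1)K(\T_N)\To[\ \phi_{\tF^{(t)}}\ ] K(\CC^0_{A^{(t)}_{N-1}})\To[\ \dim_\cor\ ]\Z.
\]
Both $\psi_t$ are ring homomorphisms: $\phi_{\tF^{(t)}}$ is a ring isomorphism by the theorem preceding the statement, and $\dim_\cor$ is multiplicative on tensor products and additive on short exact sequences. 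By construction $\psi_t([M])=\dim_\cor\tF^{(t)}(M)$, so it suffices to show $\psi_1=\psi_2$.

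First I would evaluate both maps on the classes $[L(a,b)]$ coming from segments $(a,b)$ of length $\ell\seteq b-a+1<N$. Proposition \ref{prop:image of fund. repns.} (read for $t=1,2$, with $\pi^{(1)}$ understood as the identity) gives
\[
\tF^{(t)}\bl L(a,b)\br\simeq V^{(t)}\bl\pi^{(t)}(\ell,(-q)^{a+b})\br,
\]
and Hernandez's dimension identity (Proposition \ref{prop:dim}) then yields
\[
\psi_1([L(a,b)])=\dim_\cor V^{(1)}(\ell,(-q)^{a+b})=\dim_\cor V^{(2)}\bl\pi^{(2)}(\ell,(-q)^{a+b})\br=\psi_2([L(a,b)]).
\]

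Next I would show that the classes $\{[L(a,b)] : a\le b,\ b-a+1<N\}$ generate the ring $K(\T_N)/(q-1)K(\T_N)$. For any simple $M\in\T_N$, Proposition \ref{prop:multisegments} attaches an ordered multisegment $\bl(a_1,b_1),\ldots,(a_r,b_r)\br$ with all lengths $<N$ (shorter segments being zero in $\T_N$ after factoring through $\mathcal Q$, and length-$N$ segments being identified with the unit $L_a\simeq\one$), and identifies $M$ with the head of $L(a_1,b_1)\conv\cdots\conv L(a_r,b_r)$. In $K(\T_N)/(q-1)K(\T_N)$ this gives a unitriangular expansion
\[
[L(a_1,b_1)]\cdots[L(a_r,b_r)]=[M]+\sum_{M'}d_{M'}[M'],
\]
where each other composition factor $M'$ has multisegment strictly smaller than $\bl(a_k,b_k)\br_{k}$ in the ordering \eqref{eq:right order} extended to multisegments; induction on this order expresses each $[M]$ as an integer polynomial in the $[L(a,b)]$'s.

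Combining the previous two steps, $\psi_1$ and $\psi_2$ agree on a generating set of the ring $K(\T_N)/(q-1)K(\T_N)$, hence everywhere; evaluating at $[M]$ for a simple $M$ then yields \eqref{eq:dim}. I expect the main technical burden to lie in the unitriangularity step: the paper only records that the head of the standard convolution is the associated simple (Proposition \ref{prop:multisegments}(ii)), so one has to invoke the standard combinatorial classification of simples over type-$A$ quiver Hecke algebras (cf.\ \cite{KP11}) and transfer it cleanly through the quotient $\mathcal Q$ and the localization $\Upsilon\circ\Xi$, where grade shifts collapse and the central objects $L_a$ become the unit.
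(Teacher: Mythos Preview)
Your proof is correct and follows essentially the same strategy as the paper's: both reduce to agreement on the segment modules $L(a,b)$ via Hernandez's dimension identity (Proposition~\ref{prop:dim}) together with Proposition~\ref{prop:image of fund. repns.}, and both need the unitriangularity of the standard convolutions against the simples to propagate this to all simples. The only cosmetic difference is that the paper runs an explicit induction on McNamara's bi-lexicographic order $\prec$ on Kostant partitions (citing \cite{McNa12} and \cite{Kle14} for the fact that composition factors of $\mathrm{rad}\bl L(a_1,b_1)\conv\cdots\conv L(a_r,b_r)\br$ have strictly $\prec$-smaller multisegments), whereas you package the same induction as the statement that two ring homomorphisms $\psi_1,\psi_2\col K(\T_N)/(q-1)\to\Z$ agree on a generating set; for the unitriangularity step you should cite McNamara rather than \cite{KP11}, since the precise ordering you need is the bi-lexicographic one coming from the convex order on positive roots.
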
 \begin{proof} By Proposition
\ref{prop:dim} and Proposition \ref{prop:image of fund. repns.}, we
know that \eqref{eq:dim} holds when $M$ is a $1$-dimensional module
corresponding to a segment.

Note that the assignment
$$(a,b) \mapsto \alpha_a + \alpha_{a+1} +\cdots + \alpha_{b}$$
gives a bijection between the set of segments and the set of
positive roots  of type $A_\infty$. Under this bijection, the order
$>$ on the set of segments induces a convex order $>$ of the set of
positive roots; i.e., we have $\alpha < \alpha+ \beta < \beta$, if
$\alpha,\beta, \alpha+\beta$ are positive roots  and $\alpha <
\beta$.
 It is not difficult to see $\set{L(a,b)}{(a,b) \ \text{is a segment}}$ is the cuspidal system
 corresponding to the above convex order in the sense of \cite[Definition 3.2]{Kle14}.
For $\gamma \in \rootl^+_J$, we denote by ${\rm KP}(\gamma)$ the set
of ordered multisegments
 such that the sum of the corresponding roots is  equal to $\gamma$.

Let $\bl(a_1,b_1), (a_2,b_2), \ldots (a_r,b_r)\br$ be the ordered
multisegment associated with a simple object $M$ in $\T_N$. Then, by
\cite[Theorem 3.1]{McNa12} (cf. \cite[Theorem 3.15 (iv)]{Kle14}),
 every composition factor of ${\rm rad}\bl  L(a_1,b_1) \conv
\cdots \conv L(a_r,b_r)\br$  has an associated multisegment
$$\bl (a'_1,b'_1), \ldots, (a'_s, b'_s) \br,$$
satisfying $(\al_{a'_1} + \cdots+ \al_{b'_1}) + \cdots + (\al_{a'_s}
+ \cdots+ \al_{b'_s})  \in \rm KP(\gamma)$,
 where $\gamma= (\al_{a_1} + \cdots+ \al_{b_1}) + \cdots + (\al_{a_r} + \cdots+ \al_{b_r})$,
and $\bl (a'_1,b'_1), \ldots, (a'_s, b'_s) \br  \prec \bl (a_1,b_1),
\ldots, (a_r, b_r) \br$. Here  $\prec$ denotes the bi-lexicographic
partial order on $\rm KP(\gamma)$ given in \cite[\S 3]{McNa12} (cf.
\cite[\S 3.2]{Kle14}). In particular, if $\bl(a_1,b_1),  \ldots
(a_r,b_r)\br$ is a minimal element in ${\rm KP}(\gamma)$ with
respect to $\prec$, then $M \simeq L(a_1,b_1) \conv \cdots \conv
L(a_r,b_r)$. Thus we obtain\eqref{eq:dim} in this case.

Now by induction on $\prec$, we may assume that
 \eqn
\dim_\cor \tF^{(1)}\bl{\rm rad} \bl L(a_1,b_1) \conv \cdots \conv
L(a_r,b_r) \br \br = \dim_\cor \tF^{(2)}\bl {\rm rad} \bl L(a_1,b_1)
\conv \cdots \conv L(a_r,b_r) \br\br.\eneqn It follows that \eqn
\dim_\cor \tF^{(1)}(M) &&=\displaystyle \prod_{k=1}^{r} \dim_\cor
\tF^{(1)}(L(a_k,b_k))
- \dim_\cor \tF^{(1)} \bl {\rm rad} \bl L(a_1,b_1) \conv \cdots \conv L(a_r,b_r)\br\br \\
&&=\displaystyle \prod_{k=1}^{r} \dim_\cor \tF^{(2)}(L(a_k,b_k))
 - \dim_\cor \tF^{(2)}\bl{\rm rad} \bl L(a_1,b_1) \conv \cdots \conv L(a_r,b_r)\br\br \\
&&=\dim_\cor \tF^{(2)}(M), \eneqn as desired. \end{proof}

Set
$$\phi^{(2)} \seteq \phi_{\tF^{(2)}} \circ \phi_{\tF^{(1)}}^{-1} : K(\CC^0_{A^{(1)}_{N-1}}) \isoto K(\CC^0_{A^{(2)}_{N-1}}).$$

\begin{corollary} \label{cor: iso bwn gro ring} The ring isomorphism $\phi^{(2)}$
induces a bijection between $\Irr(\CC^0_{A^{(1)}_{N-1}})$ and
$\Irr(\CC^0_{A^{(2)}_{N-1}})$, which preserves the dimensions.
\end{corollary}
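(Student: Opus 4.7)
The plan is to trace the bijection simple-by-simple through the two factorizations of $\phi^{(2)}$. First, recall that by the preceding proposition, for each $t\in\{1,2\}$ the exact tensor functor $\tF^{(t)}\col \T_N\to \CC^0_{A^{(t)}_{N-1}}$ induces a bijection between $\Irr(\T_N)_{q=1}$ and $\Irr(\CC^0_{A^{(t)}_{N-1}})$. Since the classes of simple objects form a $\Z$-basis of each Grothendieck group, the induced ring isomorphism $\phi_{\tF^{(t)}}\col K(\T_N)/(q-1)K(\T_N)\isoto K(\CC^0_{A^{(t)}_{N-1}})$ sends the basis coming from $\Irr(\T_N)_{q=1}$ to the basis of simple classes in $K(\CC^0_{A^{(t)}_{N-1}})$.

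Now I would write $\phi^{(2)}=\phi_{\tF^{(2)}}\circ\phi_{\tF^{(1)}}^{-1}$. Given $[S]\in\Irr(\CC^0_{A^{(1)}_{N-1}})$, let $M\in\Irr(\T_N)_{q=1}$ be the unique simple of $\T_N$ (up to grading shift) with $\tF^{(1)}(M)\simeq S$. Then $\phi_{\tF^{(1)}}^{-1}([S])$ is the class of $M$ in $K(\T_N)/(q-1)K(\T_N)$, and applying $\phi_{\tF^{(2)}}$ gives $[\tF^{(2)}(M)]$, which is a simple class in $K(\CC^0_{A^{(2)}_{N-1}})$. The assignment $S\mapsto \tF^{(2)}(M)$ is the desired map on simple isomorphism classes, and it is a bijection because it is the composition of the two bijections $\Irr(\CC^0_{A^{(1)}_{N-1}})\isoto \Irr(\T_N)_{q=1}\isoto \Irr(\CC^0_{A^{(2)}_{N-1}})$ provided by $\tF^{(1)}$ and $\tF^{(2)}$.

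For the dimension statement, I would simply invoke Theorem~\ref{th: dim}, which asserts $\dim_\cor\tF^{(1)}(M)=\dim_\cor\tF^{(2)}(M)$ for every simple $M\in\T_N$. Applied to the $M$ associated to a given $S\in\Irr(\CC^0_{A^{(1)}_{N-1}})$, this yields $\dim_\cor S=\dim_\cor\tF^{(2)}(M)$, so the bijection preserves dimensions.

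There is no substantive obstacle here: the corollary is a formal bookkeeping consequence of (a) the two bijections on simples provided by $\tF^{(1)}$ and $\tF^{(2)}$, and (b) the nontrivial dimension equality of Theorem~\ref{th: dim}. The only point requiring care is that $\phi_{\tF^{(1)}}^{-1}([S])$ is indeed represented by a \emph{single} simple class in $\T_N$ (rather than a nontrivial $\Z$-combination of them); this is immediate from the fact that $\tF^{(1)}$ sends $\Irr(\T_N)_{q=1}$ bijectively onto $\Irr(\CC^0_{A^{(1)}_{N-1}})$, so the inverse on basis elements is again a basis element.
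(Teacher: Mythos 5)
Your proposal is correct and is exactly the argument the paper intends (the corollary is stated without proof as an immediate consequence of the preceding propositions and Theorem~\ref{th: dim}): compose the two bijections $\Irr(\CC^0_{A^{(1)}_{N-1}})\isoto\Irr(\T_N)_{q=1}\isoto\Irr(\CC^0_{A^{(2)}_{N-1}})$ induced by $\tF^{(1)}$ and $\tF^{(2)}$, noting that simple classes form $\Z$-bases of the Grothendieck groups, and then invoke the dimension equality of Theorem~\ref{th: dim}. Your remark that $\phi_{\tF^{(1)}}^{-1}$ of a simple class is a single basis element is the right point of care, and it is handled correctly.
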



\end{document}